\theoremstyle{plain}
\newtheorem{theorem}{Theorem}[section]
\newtheorem*{theorem*}{Theorem}
\newtheorem{proposition}[theorem]{Proposition}
\newtheorem{corollary}[theorem]{Corollary}
\newtheorem{lemma}[theorem]{Lemma}
\theoremstyle{definition}
\newtheorem{definition}[theorem]{Definition}
\newtheorem{notation}[theorem]{Notation}
\newtheorem{remark}[theorem]{Remark}
\newcommand{\enm}[1]{\ensuremath{#1}}          %
\newcommand{\op}[1]{\operatorname{#1}}
\newcommand{\cal}[1]{\mathcal{#1}}
\newcommand{\CC}{\enm{\mathbb{C}}}
\newcommand{\II}{\enm{\mathbb{I}}}
\newcommand{\NN}{\enm{\mathbb{N}}}
\newcommand{\QQ}{\enm{\mathbb{Q}}}
\newcommand{\ZZ}{\enm{\mathbb{Z}}}
\newcommand{\EE}{\enm{\mathbb{E}}}
\newcommand{\PP}{\enm{\mathbb{P}}}
\newcommand{\Aa}{\enm{\cal{A}}}
\newcommand{\Dd}{\enm{\cal{D}}}
\newcommand{\Ee}{\enm{\cal{E}}}
\newcommand{\Ff}{\enm{\cal{F}}}
\newcommand{\Gg}{\enm{\cal{G}}}
\newcommand{\Ii}{\enm{\cal{I}}}
\newcommand{\Jj}{\enm{\cal{J}}}
\newcommand{\Ll}{\enm{\cal{L}}}
\newcommand{\Nn}{\enm{\cal{N}}}
\newcommand{\Oo}{\enm{\cal{O}}}
\newcommand{\Rr}{\enm{\cal{R}}}
\newcommand{\Ss}{\enm{\cal{S}}}
\newcommand{\Tt}{\enm{\cal{T}}}
\renewcommand{\phi}{\varphi}
\renewcommand{\theta}{\vartheta}
\renewcommand{\epsilon}{\varepsilon}
\newcommand{\Pic}{\op{Pic}}
\newcommand{\Ext}{\op{Ext}}
\newcommand{\Aut}{\op{Aut}}
\newcommand{\Supp}{\op{Supp}}
\renewcommand{\to}[1][]{\xrightarrow{\ #1\ }}
\newcommand{\old}[1]{}
\begin{document}

\title[Curves on Segre threefolds]{Curves on Segre threefolds}

\author{Edoardo Ballico, Kiryong Chung and Sukmoon Huh}

\address{Universit\`a di Trento, 38123 Povo (TN), Italy}
\email{edoardo.ballico@unitn.it}

\address{Department of Mathematics Education, Kyungpook National University, 80 Daehakro, Bukgu, Daegu 41566, Korea}
\email{krchung@knu.ac.kr}

\address{Sungkyunkwan University, Suwon 440-746, Korea}
\email{sukmoonh@skku.edu}

\keywords{Locally Cohen-Macaulay curve, Hilbert scheme, pure sheaf, moduli space}

\thanks{The first author is partially supported by MIUR and GNSAGA of INDAM (Italy). The second author is supported by Kyungpook National University Research Fund, 2015. The third author is supported by Basic Science Research Program 2015-037157 through NRF funded by MEST}

\subjclass[2010]{14C05, 14H50, 14Q05}

\begin{abstract}
We study locally Cohen-Macaulay curves of low degree in the Segre threefold with Picard number three and investigate the irreducible and connected components respectively of the Hilbert scheme of them. We also discuss the irreducibility of some moduli spaces of purely one-dimensional stable sheaves and apply the similar argument to the Segre threefold with Picard number two.
\end{abstract}

\maketitle
\tableofcontents
\section{Introduction}
In this paper we study curves in Segre threefolds over the field of complex numbers $\CC$. There are three types of Segre threefolds: $\PP^3$, $\PP^2 \times \PP^1$ and $\PP^1 \times \PP^1 \times \PP^1$. In $\PP^3$ the structure of the Hilbert scheme of curves has been densely studied by many authors in the last half century. In \cite{Ha1} the connectedness of the Hilbert scheme of curves is proven for the fixed degree and genus of curves, although it is classically known that the locus of smooth curves may not be connected. Recently there have been increased interests on the connectedness of the Hilbert scheme of locally Cohen-Macaulay curves. Up to now, the connectedness has been established only for very small degree \cite{NS} or for very large genus \cite{HMP}. We recommend to see \cite{hls} for further results and the state of the art on this problem. 

Our main concern is on the connectedness of Hilbert schemes of locally Cohen-Macaulay curves in $X=\PP^1 \times \PP^1 \times \PP^1$ with very small degree. Smooth curves are often the first to be studied and by the Hartshorne-Serre correspondence globally generated vector bundles on $X$ can have very close relation with smooth curves in $X$. There is a classification of globally generated vector bundles on $X$ with low first Chern class accomplished by the classification of smooth curves in $X$ with very small degree \cite{BHM}. One of the advantages in the study of curves in $X$ is that some irreducible components that might appear in the Hilbert scheme in $\PP^3$ may disappear in $X$ so that we can get simpler description of Hilbert schemes; for example, the Hilbert scheme of curves in $\PP^3$ of degree $3$ and genus $0$ has two irreducible components, one with twisted cubics and the other with planar cubics plus extra point (see \cite{PS}). The latter case cannot occur in $X$ because $X$ is scheme-theoretically cut out by quadrics. 

Our main result is as follows:
\begin{theorem}
Let $\mathbf{H}(e_1, e_2, e_3, \chi)_{+, \mathrm{red}}$ be the reduced Hilbert scheme of locally Cohen-Macaulay curves $C$ in $X$ with tridegree $(e_1, e_2, e_3)$ and $\chi(\Oo_C)=\chi$. 
\begin{itemize}
\item[(i)] $\mathbf{H}(2,0,0,a)_{+, \mathrm{red}}$ is irreducible and rational for $a\ge 2$;
\item[(ii)] $\mathbf{H}(2,1,0,a)_{+, \mathrm{red}}$ has the two irreducible components for $a\ge 3$;
\item[(iii)] $\mathbf{H}(1,1,1,a)_{+, \mathrm{red}}$ is irreducible and rational for $a\in \{1,3\}$, while $\mathbf{H}(1,1,1,2)_{+, \mathrm{red}}$ has the three connected components that are rational;
\item[(iv)] $\mathbf{H}(2,1,1,1)_{+, \mathrm{red}}$ is irreducible and rational. 
\end{itemize}
\end{theorem}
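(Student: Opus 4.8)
The plan is to stratify each reduced Hilbert scheme according to the cycle-theoretic type of the curves it parametrises, using that the tridegree is additive and that $e_i=C\cdot H_i$ forces the support of $C$ to split into lines of the three rulings together with $(1,1,0)$-, $(1,0,1)$-, $(0,1,1)$-conics lying in the slices $\PP^1\times\PP^1\times\{r\}$ and permutations thereof. In the tridegrees at hand the only multiplicity that can occur on a ruling line is $2$, and since such a line $\ell\cong\PP^1$ has normal bundle $N_{\ell/X}\cong\Oo_\ell\dirsum\Oo_\ell$, every locally Cohen--Macaulay double structure on it is a ribbon given by Ferrand's construction, i.e.\ by a saturated line subbundle $\Oo_\ell(2-b)\into N_{\ell/X}^\vee\cong\Oo_\ell^{\dirsum 2}$ whose quotient line bundle $\Ll$ satisfies $\chi(\Oo_R)=1+\chi(\Ll)=b$. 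For each stratum I would write down an explicit parameter space as an open subset of a fibre bundle over a product of copies of $\PP^1$ (the base recording the reduced support, the fibre the ribbon datum and the residual attaching scheme), and then read off its dimension, irreducibility and rationality from the Zariski-local triviality of projective bundles of vector bundles.

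For (i) the cycle is $2A$ with $A$ a first-ruling line, so for $a\ge 3$ every member is a ribbon $R$ on $A$ with $\chi(\Oo_R)=a$; these form an open subset of a $\PP(\Hom(\Oo^{\dirsum2},\Oo(a-2)))$-bundle over the $(\PP^1)^2$ of first-ruling lines, hence irreducible and rational of dimension $2a-1$. For $a=2$ one must also admit two disjoint lines, and here I would instead use the morphism $Z\mapsto\pi_{23}^{-1}(Z)$ from the Hilbert scheme of length-$2$ subschemes of $\PP^1\times\PP^1$ (a smooth rational fourfold), whose image is all of $\mathbf{H}(2,0,0,2)_{+,\mathrm{red}}$, the non-reduced $Z$ giving exactly the $\chi=2$ ribbons. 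For (iii) with $a=3$ the cycle is $A+B+C'$ with one line from each ruling, and $\chi=3$ forces them pairwise disjoint, so the generic stratum is an open subset of $((\PP^1)^2)^{3}$, irreducible and rational of dimension $6$; for $a=1$ the generic member is a smooth rational curve, parametrised after fixing the first projection by a pair of automorphisms of $\PP^1$, again irreducible and rational of dimension $6$. For (iv) the generic member is a smooth rational curve of tridegree $(2,1,1)$ with $\chi(N_{C/X})=8$, and I would parametrise these by an open subset of the space of triples of maps $\PP^1\to\PP^1$ of degrees $(2,1,1)$ modulo reparametrisation, an irreducible rational variety of dimension $8$; in each of these cases all remaining strata have strictly smaller dimension.

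For (iii) with $a=2$ the cycle is again $A+B+C'$ but the total intersection length is $1$, so exactly one pair of lines is incident; equivalently the curve has two connected pieces whose tridegrees form one of the three partitions $\{(1,1,0),(0,0,1)\}$, $\{(1,0,1),(0,1,0)\}$, $\{(0,1,1),(1,0,0)\}$. Since $\chi=2$ prevents these two pieces from meeting in a flat family, this partition is a locally constant invariant, producing the three connected components; each is the closure of the locus of a $(1,1,0)$-conic (respectively a permutation) disjoint from a complementary line, which is irreducible and rational because the conics sweep out a $\PP^3$-bundle over the $\PP^1$ of slices while the complementary line varies in $(\PP^1)^2$. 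For (ii) I would first show, using that reduced curves of type $(1,0,0)+(1,1,0)$ have $\chi\le 2$, that for $a\ge 3$ the only admissible cycle is $2A+B$, so the double line is a ribbon $R$ and $B$ is a residual second-ruling line. I expect the two irreducible components to arise according to whether $B$ is disjoint from $R$ (the disconnected stratum, $R$ a ribbon of $\chi=a-1$) or meets it (the connected stratum, $R$ a ribbon of $\chi=a$); a dimension count isolates two maximal strata, and the point will be that they are not in each other's closure inside the locally Cohen--Macaulay locus, since passing from one to the other at fixed $\chi$ would force an embedded point.

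The main obstacle throughout is the closure step: proving that every locally Cohen--Macaulay member really lies in the closure of the generic stratum I have singled out (for (i), (iii) with $a\in\{1,3\}$, and (iv)), or in one of the two, respectively three, claimed components (for (ii) and (iii) with $a=2$), and that no further component hides among the non-reduced curves. Concretely this means, for each boundary stratum, exhibiting an explicit flat family inside the locally Cohen--Macaulay locus that smooths it or connects it to the relevant generic member while keeping the tridegree and $\chi$ fixed, together with dimension counts excluding extra components. The delicate cases are the non-reduced ones, where the ribbon datum and the length of the residual attaching scheme must be controlled simultaneously; here the fact (recalled in the introduction) that $X$ is scheme-theoretically cut out by quadrics is what kills the spurious planar components that appear in $\PP^3$, and is the structural input that makes irreducibility possible in (i), (iii) with $a\in\{1,3\}$, and (iv).
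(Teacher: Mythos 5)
Your overall strategy (stratify by cycle type, treat double structures as Ferrand ribbons on lines with trivial normal bundle, count dimensions, then close up via explicit flat families) is the same as the paper's, but there are concrete gaps in two places. The most serious is in (ii): you identify the second component as the stratum where the residual $(0,1,0)$-line $B$ meets the double line in one point, with the ribbon having $\chi=a$. In fact $\deg(A\cap B)$ can equal $2$ (this happens exactly when the plane spanned by the supporting line and $B$ is the tangent plane of the ribbon at the intersection point), giving a stratum with $[A]\in \Dd_{a+1}$ of dimension $2a+1$, whereas your ``meets in one point'' stratum has dimension $2a$ and lies in the \emph{closure} of the degree-$2$ stratum (one degenerates a zero of the ribbon datum onto the attachment point). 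So the second irreducible component is the closure of the degree-$2$ stratum, not the one you name; your heuristic that moving between strata ``forces an embedded point'' does not separate these two, and the correct separation of the disconnected stratum from the connected ones uses that connectedness of the support is closed in flat families. Also, for $a=3$ your claim that the only admissible cycle is $2A+B$ is false: three pairwise disjoint lines of tridegrees $(1,0,0),(1,0,0),(0,1,0)$ are reduced with $\chi=3$, and one must show that this locus and the locus of (double line with $\chi=2$) disjoint from a line glue into a single component, which follows from the $a=2$ case of (i).

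The second gap is the closure step for (iv), which you flag but do not supply, and for which a dimension count is not enough: a Hilbert scheme can perfectly well have a lower-dimensional irreducible component consisting entirely of non-reduced curves, so you must actually exhibit flat degenerations placing every non-reduced locally CM curve of tridegree $(2,1,1)$ with $\chi=1$ in the closure of the reduced locus. This is the technical heart of the paper's argument: after showing the nilpotents are supported on a $(1,0,0)$-line carrying a double structure $A$ with residual curve $E$ of tridegree $(0,1,1)$, the case where $E$ is a conic is handled by an explicit smoothing of $A$ compatible with the attaching scheme, while the case where $E$ is two disjoint lines meeting $A$ in total degree $3$ or $4$ requires passing to the moduli space of genus-zero stable maps of class $(2,1,1)$, computing the normal space of the relevant boundary locus via a Kodaira--Spencer argument, and using irreducibility of that moduli space. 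None of this is routine, and without it irreducibility in (iv) is not established. The remaining parts, (i), (iii), and the smoothness/normal bundle computations for the reduced strata, are essentially as in the paper.
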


The main ingredient in the study of Hilbert schemes of locally Cohen-Macaulay curves with low degree is a rational ribbon, i.e. a double structure on $\PP^1$. Rational ribbons and their canonical embeddings were studied in \cite{be} and we adopt their results to prove the irreducibility of the Hilbert schemes of double lines in $X$. Then we investigate the intersecting property of the double lines with other lines in $X$ to investigate irreducible and connected components of the Hilbert schemes respectively. We recommend to see \cite{M,NNS} for the study on the families of double lines in projective spaces. It should be noted that the irreducibility of the space of curves with fixed cohomology in $\PP^3$ is investigated in \cite{Bol}. And the description of the other type of Hilbert scheme is studied in \cite{AV}. 

Let us summarize here the structure of this paper. In section $2$, we introduce the definitions and main properties that will be used throughout the paper, mainly those related to Segre threefold, Hilbert tripolynomial and Hilbert schemes of locally Cohen-Macaulay curves. In section $3$, we pay attention to the Hilbert schemes of curves with tridegree $(2,0,0)$ and conclude their irreducibility using the double structure on $\PP^1$. We end the section with the description of the intersection of the double lines with other lines in $X$, which will be used later on. In section $4$, we move forward to the Hilbert schemes of curves with tridegree $(2,1,0)$, $(1,1,1)$ and $(2,1,1)$, and describe their irreducible and connected components respectively. In the proof of irreducibility of $\mathbf{H}(2,1,1,1)_{+, \mathrm{red}}$ we use the moduli of stable maps. In section $5$, we deal with the wall-crossings among moduli space of stable pairs \cite{He} to investigate the irreducibility of some moduli spaces of stable purely $1$-dimensional sheaves on $X$. It turns out that there are no wall-crossings in our examples and it enables us to reach our conclusion in an easy way based on the results on Hilbert scheme of curves. Finally in section $6$, we apply our arguments to the case of Segre threefold with Picard number two.

We would like to thank the anonymous referee for pointing out a number of critical mistakes in the first version of this article.


\section{Preliminaries}
For three $2$-dimensional vector spaces $V_1, V_2, V_3$ over the field of complex numbers $\CC$, let $X\cong \PP (V_1) \times \PP (V_2) \times \PP (V_3)$ and then it is embedded into $\PP^7\cong \PP(V_0)$ by the Segre map where $V_0=V_1 \otimes V_2 \otimes V_3$. It is known that $X$ is the only Del Pezzo with the maximal Picard number $\varrho (X)=3$. The intersection ring $A(X)$ is isomorphic to $A(\PP^1) \otimes A(\PP^1) \otimes A(\PP^1)$ and so we have
$$A(X) \cong \ZZ[t_1, t_2, t_3]/(t_1^2, t_2^2, t_3^2).$$
We may identify $A^1(X)\cong \ZZ^{\oplus 3}$ by $a_1t_1+a_2t_2+a_3t_3 \mapsto (a_1, a_2, a_3)$. Similarly we have $A^2(X) \cong \ZZ^{\oplus 3}$ by $e_1t_2t_3+e_2t_3t_1+e_3t_1t_2\mapsto (e_1, e_2, e_3)$ and $A^3(X) \cong \ZZ$ by $ct_1t_2t_3 \mapsto c$.

Let us denote the natural projection of $X$ to $i^{\mathrm{th}}$ factor by $\pi_i: X \rightarrow \PP^1$ and we denote $\pi_1^*\Oo_{\PP^1}(a_1)\otimes \pi_2^*\Oo_{\PP^1}(a_2)\otimes \pi_3^* \Oo_{\PP^1}(a_3)$ by $\Oo_X(a_1, a_2, a_3)$. Then $X$ is embedded into $\PP^7$ by the complete linear system $|\Oo_X(1,1,1)|$ as a subvariety of degree $6$ since $(1,1,1)^3=6$. We also denote $\Ee \otimes \Oo_X(a_1, a_2, a_3)$ by $\Ee(a_1, a_2, a_3)$ for a coherent sheaf $\Ee$ on $X$. We also let $\pi_{ij}: X \rightarrow \PP^1\times \PP^1$ denote the projection to $(i,j)$-factor, i.e. $\pi _{ij}(o_1,o_2,o_3) =(o_i,o_j)$ for $(o_1,o_2,o_3)\in X$. 

\begin{proposition}\label{trip}
For a $1$-dimensional sheaf $\Ff$ on $X$, there exists a tripolynomial $\chi_{\Ff}(x,y,z)\in \QQ[x,y,z]$ of degree $1$ such that 
 $$\chi (\Ff(u,v,w))=\chi_{\Ff}(u,v,w)$$
 for all $(u,v,w)\in \ZZ^{\oplus 3}$. 
\end{proposition}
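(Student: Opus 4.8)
The statement is the three-variable version of Snapper's lemma (in the form due to Kleiman), and the plan is to prove it directly by a difference-operator argument that reduces the twisting of a $1$-dimensional sheaf to the twisting of a $0$-dimensional one. Write $L_i=\Oo_X(e_i)$ for the pullback $\pi_i^*\Oo_{\PP^1}(1)$, so that $\Ff(u,v,w)=\Ff\tensor L_1^{u}\tensor L_2^{v}\tensor L_3^{w}$, and set $P(u,v,w):=\chi(\Ff(u,v,w))$ for $(u,v,w)\in\ZZ^{\oplus 3}$. For each index $i$ let $\Delta_i$ denote the first difference operator in the $i$-th variable, so that $\Delta_1P(u,v,w)=P(u+1,v,w)-P(u,v,w)$. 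It suffices to show that $\Delta_1P,\Delta_2P,\Delta_3P$ are each constant: then $P$ is an integer-valued affine-linear function of $(u,v,w)$, and the polynomial $\chi_{\Ff}(x,y,z)\in\QQ[x,y,z]$ interpolating it has degree at most $1$.

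The core step treats $\Delta_1$, the others being identical by symmetry. Fix a general member $D_1\in|L_1|$; since $|L_1|=|\Oo_X(1,0,0)|$ is base-point-free (it realizes the projection $\pi_1$), $D_1\cong\PP^1\times\PP^1$ is a smooth divisor, and a regular section $s$ cutting it out gives the Koszul sequence $0\to\Oo_X\xrightarrow{s}L_1\to L_1|_{D_1}\to 0$. Tensoring with $\Ff(u,v,w)$ and using that $\Oo_X$ and $L_1$ are locally free, so that their higher $\Tor$ against $\Ff$ vanish, the long exact $\Tor$-sequence collapses to the four-term exact sequence
\[
0\to \Tor_1^{\Oo_X}(L_1|_{D_1},\Ff)(u,v,w)\to \Ff(u,v,w)\to \Ff(u+1,v,w)\to \Ff(u+1,v,w)|_{D_1}\to 0,
\]
where I have used that twisting by a line bundle is exact. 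Taking alternating Euler characteristics yields
\[
\Delta_1P(u,v,w)=\chi\!\big(\Ff(u+1,v,w)|_{D_1}\big)-\chi\!\big(\Tor_1^{\Oo_X}(L_1|_{D_1},\Ff)(u,v,w)\big),
\]
and both sheaves on the right-hand side are supported on $\Supp\Ff\cap D_1$.

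Because $\Supp\Ff$ is at most $1$-dimensional, for general $D_1$ the intersection $\Supp\Ff\cap D_1$ is $0$-dimensional: each irreducible component of $\Supp\Ff$ either dominates the first factor, and so meets a general fiber $D_1=\pi_1^{-1}(p)$ in finitely many points, or is contracted by $\pi_1$, and so is avoided by a general fiber. Thus both right-hand sheaves are supported at finitely many points, and for a sheaf $\Ss$ on a $0$-dimensional scheme one has $\chi(\Ss(u,v,w))=\length(\Ss)$, independent of the twist. Hence $\Delta_1P(u,v,w)$ equals the fixed integer $\length(\Ff(1,0,0)|_{D_1})-\length(\Tor_1^{\Oo_X}(L_1|_{D_1},\Ff))$, independent of $(u,v,w)$. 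By symmetry $\Delta_2P$ and $\Delta_3P$ are constant as well, so $P$ is affine-linear and the desired tripolynomial exists.

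The only genuine subtleties are the two places where I invoke generality and flatness. Since $\Ff$ is an arbitrary $1$-dimensional coherent sheaf, possibly with torsion or embedded components, one cannot discard the $\Tor_1$ term, and the argument must be carried out on the level of the full four-term complex rather than via a naive restriction sequence; retaining this term is exactly what makes the computation valid with no purity hypothesis on $\Ff$. The second point is the claim that a general $D_i$ meets $\Supp\Ff$ properly, which is where base-point-freeness of $\Oo_X(e_i)$ together with the bound $\dim\Supp\Ff\le 1$ enters, and which is the step one should state with care. I expect this genericity-plus-$\Tor$ bookkeeping to be the main obstacle; everything else is formal manipulation of Euler characteristics.
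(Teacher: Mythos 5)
Your proof is correct and follows essentially the same route as the paper's: both restrict $\Ff$ to a general divisor $D_i\in|\Oo_X(e_i)|$, observe that the resulting sheaves are supported on the finite set $\Supp(\Ff)\cap D_i$ so that their Euler characteristics are twist-invariant lengths, and conclude that each first difference of $\chi(\Ff(u,v,w))$ is constant. The only cosmetic difference is that the paper chooses $D_i$ to miss the associated points of $\Ff$, making the section a nonzerodivisor and killing the $\Tor_1$ term you carry along, whereas you trade that genericity for the extra $\Tor$ bookkeeping; both work.
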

\begin{proof}
This follows verbatim from the proof of \cite[Proposition 2]{BH}. Let $at+b\in \QQ[t]$ be the Hilbert polynomial of $\Ff$ with respect to $\Oo_X(1,1,1)$. Take a divisor $D_1\in |\Oo_X(1,0,0)|$ such that $D_1$ misses any $0$-dimensional components (embedded or isolated) of $\mathrm{Supp}(\Ff)$ and does not contain any component of the $1$-dimensional reduced scheme associated to $\mathrm{Supp}(\Ff)$. Then it gives us an injective map $j_{D_1}: \Ff(t,t,t) \rightarrow \Ff(t+1, t,t)$ and so we have an exact sequence
\begin{equation}\label{eqa1}
0\to \Ff (t,t,t) \to \Ff(t+1, t,t) \to \Ff (t+1, t,t)\otimes \Oo_{D_1} \to 0.
\end{equation}
Similarly let us fix other divisors $D_2\in |\Oo_X(0,1,0)|,D_3\in |\Oo_X(0,0,1)|$ and $D\in |\Oo_X(1,1,1)|$ to define maps $j_{D_2},j_{D_3}$ and $j_D$ with exact sequences as in (\ref{eqa1}). 

Let us set $l:=h^0(\Ff(t+1, t,t)\otimes \Oo_{D_1})$, $m:=h^0(\Ff(t, t+1,t)\otimes \Oo_{D_2})$ and $n:=h^0(\Ff(t, t,t+1)\otimes \Oo_{D_3})$ that are independent on $t$. We claim that $\chi (\Ff(u,v,w))=lu+mv+nw+b$ for all $(u,v,w)\in \ZZ^{\oplus 3}$. From the exact sequence for $D$, we have $l+m+n=h^0(\Ff(t+1, t+1, t+1)\otimes \Oo_D)=a$ and so the claim is true if $u=v=w$. In general let us assume $u\ge v \ge w$ without loss of generality. Then we get $\chi(\Ff(u,v,w))=\chi(\Ff(w,w,w))+l(u-w)+m(v-w)$, using the exact sequences for $D_1$ and $D_2$ several times. 
\end{proof}

\begin{definition}
We call the linear tripolynomial in Proposition \ref{trip} the {\it Hilbert tripolynomial} of $\Ff$ for a purely $1$-dimensional sheaf $\Ff$, i.e. $\chi_{\Ff}(x,y,z)=e_1x+e_2y+e_3z+\chi$ for some $(e_1, e_2, e_3,\chi)\in \ZZ^{\oplus 4}$. In particular, the Hilbert polynomial of $\Ff$ with respect to $\Oo_X(1,1,1)$ is defined to be $\chi_{\Ff}(t)=\chi_{\Ff}(t,t,t)$. We also call $\chi_{\Oo_C}(x,y,z)$ the Hilbert tripolynomial of a curve $C$.
\end{definition}

Let $\mathbf{H}{(e_1, e_2, e_3, \chi)}$ be the Hilbert scheme of curves in $X$ with the Hilbert tripolynomial $e_1x+e_2y+e_3z+\chi$, and let $\mathbf{H}{(e_1, e_2, e_3, \chi)}^{\mathrm{sm}}$ be the open locus corresponding to smooth and connected curves. 

\begin{definition}
A {\it locally Cohen-Macaulay (for short, locally CM)} curve in $X$ is a $1$-dimensional subscheme $C\subset X$ whose irreducible components are all $1$-dimensional and that has no embedded points. Equivalently $\Oo_C$ is purely $1$-dimensional. 
\end{definition}

We denote by $\mathbf{H}(e_1, e_2, e_3, \chi)_+$ the subset of $\mathbf{H}(e_1, e_2, e_3, \chi)$ parametrizing the locally CM curves with no isolated point. In particular we have $\mathbf{H}(e_1, e_2, e_3, \chi)^{\mathrm{sm}} \subset \mathbf{H}(e_1, e_2, e_3, \chi)_+$. 
 
\begin{remark}\label{tr1}
Let $C$ be an integral projective curve. By the universal property of fibered product there is a bijection between the morphisms $u: C\rightarrow X$ and the triples $(u_1,u_2,u_3)$ with $u_i: C \rightarrow \PP^1$ any morphism. The set $u(C)$ is contained in a $2$-dimensional factor of $X$ if and only if one of the $u_1,u_2,u_3$ is constant. We say that a constant map has degree zero. With this convention we may associate to any $u$ a triple $(\deg (u_1),\deg (u_2),\deg (u_3))\in \ZZ_{\ge 0}^{\oplus 3}$ and $u(C)$ is a curve if and only if $(\deg (u_1),\deg (u_2),\deg (u_3)) \ne (0,0,0)$. Now assume that $u$ is birational onto its image. With this assumption for all $(a_1,a_2,a_3)\in \ZZ^{\oplus 3}$, we have 
$$u(C)\cdot \Oo _X(a_1,a_2,a_3) =a_1\deg (u_1)+a_2\deg (u_2)+a_3\deg (u_3).$$
In particular the degree of the curve $u(C)$ is $\deg (u_1)+\deg (u_2)+\deg (u_3)$.
\end{remark}

\begin{lemma}\label{a1}
Let $C\subset X$ be a locally CM curve with the tridegree $(e_1, e_2, e_3)$. If the tridegree of $C_{\mathrm{red}}$ is $(b_1, b_2, b_3)$ with $b_i=0$ for some $i$, then we have $e_i=0$.
\end{lemma}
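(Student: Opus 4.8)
The plan is to read $e_1$ (we may take $i=1$ without loss of generality, by the symmetry of the three factors) directly off the Hilbert tripolynomial. Setting $v=w=0$ in the defining identity $\chi(\Oo_C(u,v,w))=e_1u+e_2v+e_3w+\chi$ gives
$$\chi(\Oo_C(u,0,0))=e_1u+\chi\qquad\text{for all }u\in\ZZ.$$
Since $\Oo_C(u,0,0)=(\pi_1^*\Oo_{\PP^1}(u))|_C$, it suffices to prove that the restriction $(\pi_1^*\Oo_{\PP^1}(1))|_C$ is the trivial line bundle on $C$. Indeed, if it is trivial then $\Oo_C(u,0,0)\cong\Oo_C$ for every $u\in\ZZ$, so the left-hand side above is the constant $\chi(\Oo_C)=\chi$, and comparing the coefficient of $u$ forces $e_1=0$.

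First I would translate the hypothesis $b_1=0$ into a statement about the support of $C$. Writing $C_{\mathrm{red}}$ as the union of its integral components $C'_1,\dots,C'_r$, the tridegree of $C_{\mathrm{red}}$ is the sum of the tridegrees of the $C'_j$, and each of these lies in $\ZZ_{\ge 0}^{\oplus 3}$ by Remark \ref{tr1}. Hence the vanishing of the first coordinate, $b_1=0$, forces the first coordinate of each $C'_j$ to vanish; by Remark \ref{tr1} (with the convention that a constant map has degree zero) this means that $\pi_1|_{C'_j}$ is constant for every $j$. Therefore $\pi_1(C_{\mathrm{red}})$ is a finite subset of $\PP^1$, and since $C$ and $C_{\mathrm{red}}$ have the same underlying topological space, $\pi_1(\Supp(\Oo_C))$ is this same finite set.

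Finally I would conclude via the scheme-theoretic image. Let $Z\subseteq\PP^1$ be the scheme-theoretic image of $\pi_1|_C$; its support is contained in the finite set $\pi_1(\Supp(\Oo_C))$, so $Z$ is a $0$-dimensional (Artinian) closed subscheme of $\PP^1$. The morphism $\pi_1|_C$ then factors as $C\To{g}Z\into\PP^1$, whence $(\pi_1^*\Oo_{\PP^1}(1))|_C\cong g^*(\Oo_{\PP^1}(1)|_Z)$. But a $0$-dimensional scheme over $\CC$ is a finite disjoint union of spectra of Artinian local rings, each of which has trivial Picard group, so $\Oo_{\PP^1}(1)|_Z$ is trivial on $Z$ and its pullback $(\pi_1^*\Oo_{\PP^1}(1))|_C$ is trivial on $C$, as required. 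The one point that needs genuine care is precisely this last triviality of the line bundle restricted to the (possibly nonreduced and disconnected) finite image scheme $Z$; everything else is bookkeeping with degrees. I would also remark that the locally Cohen--Macaulay hypothesis is not essential here—all the argument uses is that $C$ and $C_{\mathrm{red}}$ share the same support—but it is the setting of interest.
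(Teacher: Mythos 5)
Your proof is correct, and its core is the same as the paper's: from $b_i=0$ you deduce that every integral component of $C_{\mathrm{red}}$ is contracted by $\pi_i$, hence that $\pi_{i|C}$ has finite image because $C$ and $C_{\mathrm{red}}$ have the same underlying topological space. Where you differ is only in the final step that converts ``$\pi_{i|C}$ has finite image'' into $e_i=0$. The paper identifies $e_i$ with the length of the generic fibre of $\pi_{i|C}$ and notes that a morphism with finite image has empty generic fibre; you instead read $e_i$ off the Hilbert tripolynomial as the coefficient of $u$ in $\chi(\Oo_C(u,0,0))$ and kill it by showing that $\Oo_C(1,0,0)$ is a trivial line bundle, via the factorization of $\pi_{1|C}$ through the Artinian scheme-theoretic image $Z\subset \PP^1$ and the triviality of $\Pic(Z)$. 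Your version of this last step is the more fully justified of the two (the paper's appeal to generic fibre lengths is a one-line assertion), it works directly with the paper's definition of tridegree via the tripolynomial of Proposition \ref{trip}, and it applies verbatim to any purely $1$-dimensional sheaf rather than just $\Oo_C$; the cost is a small detour through scheme-theoretic images. Both arguments are sound.
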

\begin{proof}
In general, if $u_i : C \rightarrow \PP^1$ is the $i^{\mathrm{th}}$-projection with $f_i$ the length of the generic fibre of $u_i$, then $C$ has tridegree $(f_1, f_2, f_3)$. Now let us assume $i=3$, i.e. $b_3=0$. The restriction of the projection $\pi_{3|C}: C\rightarrow \PP^1$ has degree $e_3$. Similarly $\pi_{3|C_{\mathrm{red}}}: C_{\mathrm{red}} \rightarrow \PP^1$ has degree $b_3=0$. Thus $\pi_{3|C_{\mathrm{red}}}$ has finite image and so does $\pi_{3|C}$. In particular, we have $e_3=0$.
\end{proof}


\section{Double lines}

\begin{notation}
Throughout this article by a line we mean  a CM curve with tridegree $(1,0,0)$, $(0,1,0)$ or $(0,0,1)$. A double line is by definition a double structure on a line. For each $a\in \ZZ$, let $\Dd _a$ be the subset of  $\mathbf{H}(2,0,0,a)_+$ parametrizing the double lines.
\end{notation}

For the moment we take $\Dd _a$ as a set. In each case it would be clear which scheme-structure is used on it. Since $X$ is a smooth 3-fold, \cite[Remark 1.3]{bf} says that each $B\in \Dd _a$ is obtained by the Ferrand construction and in particular it is a ribbon in the sense of \cite{be} with a line of tridegree $(1,0,0)$ as its support. Let $C_a$ be the unique split ribbon with $\chi (\Oo _{C_a}) = a$ and every ribbon is split for $a\ge 1$ by \cite[Theorem 1.2]{bf}. Each $f\in \mathrm{Aut}(C_a)$ induces an automorphism $\widetilde{f}$ of $\PP^1$ and the map $f\mapsto \widetilde{f}$ is surjective. Thus we get $\dim \mathrm{Aut}(C_a) \ge 3$. Since $C_a$ is equipped with a specification of a normal direction at each point of $\PP^1$, so we have $\mathrm{Aut}(C_a) \cong \mathrm{Aut}(\PP^1)$. In particular, we have $\dim \mathrm{Aut}(C_a)=3$.

\begin{theorem}\label{o1}
The description on $\Dd_a$ is as follows:
\begin{enumerate}
\item $\Dd_a$ is non-empty if and only if $a\ge 2$. It is parametrized by an irreducible and rational variety of dimension $2a-1$.
\item We have $\Dd _a = \mathbf{H}(2,0,0,a)_+$ for $a\ge 3$
\item $\mathbf{H}(2,0,0,2)_+$ is isomorphic to $\mathbf{Hilb}^2(\PP^1\times \PP^1)$, the Hilbert scheme of two points in $\PP^1 \times \PP^1$. In particular, it is smooth, irreducible, rational and of dimension $4$. 
\end{enumerate}
\end{theorem}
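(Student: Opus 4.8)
The plan is to reduce everything to the Ferrand data recalled before the statement, so that double lines become linear algebra on their support, and then to treat $\mathbf{H}(2,0,0,2)_+$ separately through the projection $\pi_{23}$. First I would record the structural input. A line of tridegree $(1,0,0)$ is precisely a fibre $L=\pi_{23}^{-1}(p)$ over a point $p\in\PP^1\times\PP^1$, so such lines are parametrized by $\PP^1\times\PP^1$, and each $L\cong\PP^1$ is a coordinate line with trivial normal bundle, hence $N^{*}_{L/X}\cong\Oo_{\PP^1}^{\oplus 2}$. By the Ferrand construction a ribbon $B\in\Dd_a$ supported on $L$ inside $X$ is the datum of a surjection $N^{*}_{L/X}\twoheadrightarrow\Qq$ onto a line bundle $\Qq=I_L/I_B$ on $L$, taken up to the scalar automorphisms of $\Qq$. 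Writing $\Qq\cong\Oo_{\PP^1}(d)$ and using $0\to\Qq\to\Oo_B\to\Oo_L\to 0$ gives $\chi(\Oo_B)=\chi(\Qq)+1=d+2$; since $\Qq$ is a quotient of $\Oo_{\PP^1}^{\oplus 2}$ it is globally generated, so $d\ge 0$. Thus $\chi\ge 2$ with $d=a-2$, which shows $\Dd_a=\emptyset$ for $a\le 1$ and that a surjection, hence a ribbon, exists exactly when $a\ge 2$; this is the non-emptiness claim of (1).

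For the remainder of (1) I would globalize this correspondence over the base $\PP^1\times\PP^1$ of support lines. Over the universal line I form the relative conormal bundle and the relative $\mathcal{H}om$-sheaf $\mathcal{H}om(N^{*},\Oo_{\PP^1}(a-2))$; the locus of fibrewise surjections is a dense open subset of its projectivization, which is a $\PP^{2a-3}$-bundle over $\PP^1\times\PP^1$. This produces an irreducible rational variety $\Zz$ of dimension $2+(2a-3)=2a-1$, together with a bijective morphism $\Zz\to\Dd_a$ given by the Ferrand construction in families. Hence $\Dd_a$ is irreducible, rational, and of dimension $2a-1$.

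For (2) I would classify the reductions. If $C\in\mathbf{H}(2,0,0,a)_+$ then by Lemma \ref{a1} every component of $C_{\mathrm{red}}$ is a line of tridegree $(1,0,0)$, i.e. a fibre of $\pi_{23}$, and two distinct such fibres are disjoint. If $C_{\mathrm{red}}$ had two components, purity together with the additivity of tridegree would force each to occur with multiplicity one, so $C=L_1\sqcup L_2$ would be reduced with $\chi=2$; this is impossible once $a\ge 3$. Therefore for $a\ge 3$ the scheme $C_{\mathrm{red}}$ is a single line carrying a multiplicity-two locally CM structure, which is a ribbon, so $C\in\Dd_a$ and $\Dd_a=\mathbf{H}(2,0,0,a)_+$.

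Finally for (3) I would build mutually inverse morphisms with $\mathbf{Hilb}^2(\PP^1\times\PP^1)$ using $\pi_{23}$. In one direction $W\mapsto\pi_{23}^{-1}(W)=\PP^1\times W$ is flat of tridegree $(2,0,0)$ with $\chi=\chi(\Oo_{\PP^1})\cdot\chi(\Oo_W)=1\cdot 2=2$, giving a morphism $\mathbf{Hilb}^2(\PP^1\times\PP^1)\to\mathbf{H}(2,0,0,2)_+$. In the other direction $C\mapsto\pi_{23*}\Oo_C$, the structure sheaf of a length-two subscheme of $\PP^1\times\PP^1$; since $R^1\pi_{23*}\Oo_C=0$ on these curves, cohomology and base change makes this construction work in families. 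The crux, and the step I expect to be the main obstacle, is to show that every $C\in\mathbf{H}(2,0,0,2)_+$ is a genuine pullback $\PP^1\times W$, i.e. that $I_C$ is generated by $\pi_{23}^{-1}I_W$; this is exactly where the minimality of $\chi=2$ is used, as for larger $\chi$ the ribbons twist and fail to be pullbacks. Granting this, the two maps are inverse, so $\mathbf{H}(2,0,0,2)_+\cong\mathbf{Hilb}^2(\PP^1\times\PP^1)$, which is smooth, irreducible, rational of dimension $4$.
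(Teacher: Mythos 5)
Parts (1) and (2) of your argument are essentially the paper's: the paper also reduces a double line to a surjection $\Oo_L^{\oplus 2}\cong \Ii_L/\Ii_L^2 \twoheadrightarrow \Oo_L(a-2)$ (citing Migliore and Nollet for this parametrization), gets non-emptiness exactly for $a\ge 2$ and the dimension count $2+(2a-3)=2a-1$, and disposes of (2) by noting that a disjoint pair of lines has $\chi=2$. These parts are correct.

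For (3) your route is genuinely different -- mutually inverse morphisms built from $\pi_{23}^{-1}$ and $\pi_{23*}$ rather than the paper's pointwise identification -- but it has a gap exactly where you flag it: you never prove that every $[C]\in\mathbf{H}(2,0,0,2)_+$ is a pullback $\PP^1\times W$, you only ``grant'' it. This claim is true and is in fact an easy consequence of your own part (1): for $a=2$ the Ferrand surjection is a map $\Oo_L^{\oplus 2}\to\Oo_L$ given by a pair of constants, and since $\Ii_L/\Ii_L^2\cong\pi_{23}^*\bigl(\mm_p/\mm_p^2\bigr)\big|_L$ for $p=\pi_{23}(L)$, a constant surjection is pulled back from a surjection $\mm_p/\mm_p^2\to\CC$, i.e.\ from a length-two subscheme $Z$ at $p$, whence $\Ii_B=\pi_{23}^{-1}\Ii_Z\cdot\Oo_X$. (The paper instead argues that $Z\mapsto\PP^1\times Z$ maps the smooth complete $3$-fold of connected length-two subschemes injectively into the irreducible $3$-dimensional $\Dd_2$, so it is onto.) You should supply one of these arguments rather than assume the claim. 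A second, quieter gap: even with the pullback claim on closed points, concluding an isomorphism of \emph{schemes} (hence smoothness) requires either carrying out your base-change argument at the level of families over arbitrary, possibly non-reduced, bases, or bounding the tangent spaces by $h^0(N_C)\le 4$. The paper does the latter: it observes that each $B\in\Dd_2$ is a complete intersection (of two members of $|\Oo_X(0,1,1)|$, or of members of $|\Oo_X(0,0,1)|$ and $|\Oo_X(0,2,0)|$), deduces $h^1(N_B)=0$ and $\chi(N_B)=4$, and then the bijective morphism from $\mathbf{Hilb}^2(\PP^1\times\PP^1)$ onto the smooth target is an isomorphism. Your proposal omits any normal-bundle computation, so as written the smoothness assertion is not established.
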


\begin{proof}
Fix $[A]\in \Dd _a$ and set $L:= A_{\mathrm{red}}$. The curve $A$ is a locally CM curve
of degree $2$ with $L \cong \PP^1$ as its support (see \cite[Remark 1.3]{bf}) and so it is a ribbon in the sense of \cite{be}. The projection $\pi_1$ induces a morphism $\pi_{1|A} : A  \rightarrow \PP^1$ whose restriction to $L = A_{\mathrm{red}}$ is the isomorphism $\pi_{1|L}: L\rightarrow \PP^1$. Thus $A$ is a split ribbon (see \cite[Corollary 1.7]{be}). Thus $\Oo_A$ fits into an exact sequence
\begin{equation}\label{eqcc1}
0 \to \Oo _L(a-2) \to \Oo _A \to \Oo _L\to 0,
\end{equation}
which splits as an exact sequence of $\Oo _L$-modules. 

Since $L$ is a complete intersection in $X$ of two planes of type $(0,1,0), (0,0,1)\in \Pic (X)$, the Koszul resolution for $\Ii_L$ shows that $\Ii_L/\Ii_L^2 \cong \Oo_L^{\oplus 2}$. In particular the normal bundle $N_{L|X}$ is trivial and double lines supported on $L$ are parametrized by the surjective morphism in $\mathrm{Hom}(\Oo_L^{\oplus 2}, \Oo_L(a-2))$ as in \cite[Introduction]{M} and \cite[Proposition 1.4]{N}. It proves part (1). The part (2) follows because two disjoint lines have genus $-1$. 

For the part (3), note that the maps split when $a=2$, showing that $A$ is a complete intersection. Let $\EE$ be the subset of $\mathbf{H}(2,0,0,2)_+$ parametrizing two disjoint lines of tridegree $(1,0,0)$. Since $h^1(N_C)=0$ for every $[C]\in \EE$, we have $\mathbf{H}(2,0,0,1)_+$ is smooth at each point of $\EE$ and of dimension $h^0(N_C) =4$. Obviously $\mathbf{H}(2,0,0,a)_{+,\mathrm{red}} = \EE \cup \Dd _2$, and $\EE$ is irreducible and rational. We have $\dim (\Dd_2)= 3$ and so it is sufficient to prove that at each $[B]\in \Dd _2$ the scheme $\mathbf{H}(2,0,0,a)_+$ is smooth and it has dimension $4$. Fix $[B]\in \Dd _2$. Since $B$ has only planar singularities, it is locally unobstructed (see \cite[2.12.1]{kol}). Hence $\mathbf{H}(2,0,0,a)_+$ has dimension at least $\chi (N_B)$ at $[B]$ by \cite[Theorem 2.15.3]{kol}). Since $\Dd _2$ is irreducible, we get $\chi (N_A) = \chi (N_E)$ for all $[A],[E]\in \Dd _2$. If $B$ is contained in a smooth quadric surface $T\in |\Oo _X(1,1,0)|$, then $N_B \cong \Oo _B^{\oplus 2}$ and so $\chi (N_B) = 4$.  In this case we also have $h^1(N_B) =0$ by (\ref{eqcc1}).

\quad {\emph {Claim}}: There is a connected $0$-dimensional subscheme $Z\subset \PP^1\times \PP^1$ of degree $2$ such that $B \cong L\times Z$.

\quad {\emph {Proof of Claim}}: For each connected $0$-dimensional subscheme $Z\subset \PP^1\times \PP^1$ of degree $2$, we have $L\times Z\in \Dd _2$. Since the set of all such $Z$ is smooth, irreducible, complete and $3$-dimensional, we get the assertion. \qed

There are two types of connected $0$-dimensional subscheme $Z\subset \PP^1\times \PP^1$ of degree $2$: the ones are contained in a ruling of $\PP^1\times \PP^1$ and the other ones are the complete intersection of two elements of $|\Oo _{\PP^1\times \PP^1}(1,1)|$. Thus our double lines $B$'s are the complete intersection of two elements of $|\Oo _X(0,1,1)|$. Hence even for these $B$'s we have $h^1(N_B)=0$, concluding the proof of the smoothness of $\mathbf{H}(2,0,0,a)_+$.

Obviously Claim holds also for the reduced $[C]\in \mathbf{H}(2,0,0,a)_+$ with $Z$ a reduced subscheme of $\PP^1\times \PP^1$ of degree $2$. Thus we get $\mathbf{H}(2,0,0,a)_+ \cong \mathbf{Hilb}^2(\PP^1\times \PP^1)$, which is isomorphic to the blow-up of the symmetric product $\mathrm{Sym}^2(\PP^1\times \PP^1)$ along the diagonal.
\end{proof}

\begin{remark}
From the proof of Theorem \ref{o1} each double line in $\Dd_a$ is associated to the triple $(L,f,g)$ with $L\subset X$ a line of tridegree $(1,0,0)$ and $f,g\in \CC[x_0, x_1]_{a-2}$ with no common zero, where $x_0$ and $x_1$ are homogeneous linear forms on $L$. 
\end{remark}

\begin{remark}
Fix an integer $a\ge 2$ and any $[A]\in \Dd _a$. For $(u,v) \in \ZZ^{\oplus 2}$, we have 
\begin{equation}\label{cccc}
\begin{tabular}{|c||c|c|c|}
  \hline
   & $h^0(\Oo_A(t,u,v))$ & $h^1(\Oo_A(t,u,v))$ & $h^2(\Ii_A(t,t,t))$ \\
  \hline
  \hline
  $ -1\le t $ & $t+a$ & $0$ & $0$  \\
  \hline
  $-a+1\le t\le -2$ & $t+a-1$ & $-t-1$ & $-t-1$  \\
  \hline
 $t\le -a$ & $0$ & $-2t-a$ & $-2t-a$ \\
  \hline
\end{tabular}
\end{equation}
Indeed we have $h^2(\Ii _A(t,t,t)) = h^1(\Oo _A(t,t,t))$ for all $t$, since $h^1(\Oo _X(t,t,t)) = h^2(\Oo _X(t,t,t)) =0$. Now the table follows from (\ref{eqcc1}) by setting $L:= A_{\mathrm{red}}$, because it is a split exact sequence as $\Oo _L$-sheaves. 

Since $h^1(\Oo _L(a-2)) =0$, the exponential sequence associated to (\ref{eqcc1})
$$0\to \Oo_L(a-2) \to \Oo_A^* \to \Oo_L^* \to 1$$
gives that the restriction map $\mathrm{Pic}({A}) \rightarrow \mathrm{Pic}(L)$ is bijective, as in the proof of \cite[Proposition 4.1]{be}. Thus we have $\Oo _A(c,u,v) \cong \Oo _A(c,c,c)$ for all $(c,u,v)\in \ZZ^{\oplus 3}$ and $\Oo _A(c,c,c)$ is the only line bundle on $A$ whose restriction to $L$ is $\Oo_L(c,c,c)$. Hence the table (\ref{cccc}) computes the cohomology groups of all line bundles on $A$.
\end{remark}

\begin{remark}
Take $[A]\in \Dd _2$. We saw in the proof of part (3) in Theorem \ref{o1} that $A$ is either the complete intersection of two elements of $|\Oo _X(0,1,1)|$, the case in which $Z$ is not contained in a ruling of $\PP^1\times \PP^1$, or a complete intersection of an element of $|\Oo _X(0,0,1)|$ (resp. $|\Oo _X(0,1,0)|$) and an element of $|\Oo _X(0,2,0)|$ (resp. $|\Oo _X(0,0,2)|$), the case in which $Z$ is contained in a ruling of $\PP^1\times \PP^1$. In both cases we have $h^1(\Ii _A(t,t,t)) =0$ for all $t\ne 0$ and $h^1(\Ii _A)=1$.
\end{remark}

\begin{lemma}\label{bp02}
For $a\ge 2$, fix $[A]\in \Dd_a$ with $L:=A_{\mathrm{red}}$. 
\begin{itemize}
\item[(i)] For a line $L'\subset X$ different from $L$, we have $\deg (A\cap L') \le 2$. Moreover we have $A\cap L'=\emptyset$ if and only if $L\cap L' =\emptyset$. 
\item[(ii)] The following set is a non-empty irreducible and rationally connected variety of dimension $2a+1;$
$$\left\{(B,L')~|~[B]\in \Dd_a ,~ L' \text{ a line of tridegree $(0,1,0)$ with }B\cap L'=\emptyset \right\}.$$
\item[(iii)] The tangent plane $T_pA$ at a point $p\in L$ is a plane containing $L$ and contained in the $3$-dimensional tangent space $T_pX$. 
\item[(iv)] $T_pA\cap X$ is the union of the three lines $L, L_1, L_2$ of $X$ through $p$, where $L_1$ of tridegree $(0,1,0)$ and $L_2$ of tridegree $(0,0,1)$. We have $\deg (A\cap L_i) =2$ if and only if $T_pA$ is the plane spanned by $L\cup L_i$.
\end{itemize}
\end{lemma}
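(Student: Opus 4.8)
The plan is to reduce all four parts to a single explicit local model for the ribbon $A$. Choose coordinates $([x_0{:}x_1],[y_0{:}y_1],[z_0{:}z_1])$ on $X$ so that $L=\{y_0=z_0=0\}$ has tridegree $(1,0,0)$; then $\Ii_L=(y_0,z_0)$ and $\Ii_L/\Ii_L^2\cong\Oo_L^{\oplus2}$. By the preceding Remark, $A$ is recorded by a triple $(L,f,g)$ with $f,g\in\CC[x_0,x_1]_{a-2}$ coprime, and the Ferrand construction realizes $\Oo_A$ through the surjection $\Oo_L^{\oplus2}\to\Oo_L(a-2)$, $(y_0,z_0)\mapsto(f,g)$; its kernel is generated by $g\,y_0-f\,z_0$, so
$$\Ii_A=\Ii_L^2+(g\,y_0-f\,z_0)=(y_0^2,\,y_0z_0,\,z_0^2,\,g\,y_0-f\,z_0).$$
In the affine chart $x_1=y_1=z_1=1$ with coordinates $s=x_0,\ b=y_0,\ c=z_0$ (so $L=\{b=c=0\}$ and $f=f(s),g=g(s)$) this becomes $(b^2,bc,c^2,g(s)\,b-f(s)\,c)$, and this one model drives everything.

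For part (i), since $A_{\mathrm{red}}=L$ the scheme $A\cap L'$ is supported on $L\cap L'$, whence $A\cap L'=\emptyset$ if and only if $L\cap L'=\emptyset$. For the degree bound I would split on the tridegree of $L'$: a line of tridegree $(1,0,0)$ distinct from $L$ is a parallel fibre of $\pi_{23}$, hence disjoint from $L$; a line $L'$ of tridegree $(0,1,0)$ meets $L$ in at most one point $p=(s_0,0,0)$, and restricting the ideal above to $L'=\{s=s_0,\,c=0\}$ gives the ideal $(b^2,\,g(s_0)\,b)$ in $\CC[b]$, of colength $1$ if $g(s_0)\neq0$ and $2$ if $g(s_0)=0$; the tridegree $(0,0,1)$ case is symmetric, yielding $(c^2,\,f(s_0)\,c)$. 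In every case $\deg(A\cap L')\le2$.

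Parts (iii) and (iv) are the tangent-space avatar of the same calculation. Being a ribbon, $A$ has planar singularities, and its completed local ring at $p$ is $\CC[[s,b,c]]/(b^2,bc,c^2,g\,b-f\,c)$; the first three generators are quadratic, so the Zariski tangent space is cut out inside $T_pX$ by the linear part $g(s_0)\,db-f(s_0)\,dc$ of the last generator. As $(f(s_0),g(s_0))\neq(0,0)$ this is a single equation, giving a plane $T_pA\subset T_pX$ that contains the $s$-axis $L=T_pL$; this is part (iii). For part (iv) I would first compute, in Segre coordinates centred at $p$, that $T_pX\cap X$ is exactly the union of the three lines $L,L_1,L_2$ through $p$ (the coordinate axes $ds,db,dc$), the remaining Segre coordinates vanishing to order two there. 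Then $T_pA\cap X=T_pA\cap(L\cup L_1\cup L_2)$, and since $T_pA$ is a plane through $p$ it contains $L_1$ (direction $db$) exactly when $g(s_0)=0$ and $L_2$ (direction $dc$) exactly when $f(s_0)=0$; comparing with part (i), $\deg(A\cap L_i)=2$ holds precisely when $L_i\subset T_pA$, i.e.\ when $T_pA=\langle L\cup L_i\rangle$.

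Part (ii) is then formal. The set in question maps to $\Dd_a\times G$, where $G\cong\PP^1\times\PP^1$ is the variety of lines of tridegree $(0,1,0)$; by part (i) the incidence condition $B\cap L'=\emptyset$ is equivalent to $B_{\mathrm{red}}\cap L'=\emptyset$, which in coordinates is the open and non-empty condition that the third-factor points attached to $B_{\mathrm{red}}$ and to $L'$ differ. Hence the set is a dense open subscheme of $\Dd_a\times G$, which by Theorem \ref{o1}(1) is irreducible and rational of dimension $(2a-1)+2=2a+1$; an open subset of a rational variety is rational, hence rationally connected. I expect the only real obstacle to be bookkeeping: one must make sure that the abstract Ferrand/ribbon data $(f,g)$ truly yields the ideal $(b^2,bc,c^2,g\,b-f\,c)$ and that $T_pX\cap X$ is the reduced union of the three coordinate lines; once these are secured, the colength and tangent computations of (i), (iii), (iv) and the openness argument of (ii) are routine.
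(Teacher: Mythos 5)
Your proof is correct and follows essentially the same route as the paper's, whose own argument merely records the support criterion for $A\cap L'=\emptyset$ and the open parametrization needed for (ii) before declaring the remaining assertions obvious; your explicit local model $\Ii_A=(y_0^2,\,y_0z_0,\,z_0^2,\,g\,y_0-f\,z_0)$ supplies exactly the colength and tangent-space computations the paper leaves implicit. The only discrepancy is a harmless labeling convention (you attach $g$ to the $(0,1,0)$-direction where Remark~\ref{zz1} attaches $f$), which does not affect the validity of the lemma.
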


\begin{proof}
Since $L$ and $L'$ are different, so $\deg (A\cap L')$ is a well-defined non-negative integer, and we have $\deg (A\cap L')=0$ if and only if $A\cap L' =\emptyset$, i.e. $L\cap L' =\emptyset$. Since $L$ has tridegree $(1,0,0)$, there is $(o,o')\in \PP^1\times \PP^1$ such that $L = \PP^1\times \{(o,o')\}$. The complement of $\PP^1\times \{o'\}$ in $\PP^1 \times \PP^1$ parametrizes the set of all lines $T$ of tridegree $(0,1,0)$ with $T\cap L=\emptyset$. The other assertions are obvious.
\end{proof}

\begin{lemma}\label{bo01}
For $[A]\in \mathbf{H}(2,0,0,2)_+$, there exists a line $R \subset X$ of tridegree $(0,1,0)$ with $\deg (A\cap R)\ge 2$, i.e. $\deg (A\cap R)=2$ if and only if there exists $Q\in |\Oo _X(0,0,1)|$ that contains $A$. In this case $Q$ is unique, $A\cup R\subset Q$ and there is a $1$-dimensional family of such lines $R$.
\end{lemma}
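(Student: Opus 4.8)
The plan is to reduce everything to the product description of $A$ established in the proof of Theorem \ref{o1}(3). Recall from there (the Claim together with the closing remark of that proof, which is valid for \emph{every} $[A]\in \mathbf{H}(2,0,0,2)_+$, reduced or not) that $A = \PP^1\times Z$ as a subscheme of $X = \PP^1\times(\PP^1\times\PP^1)$, where the first $\PP^1$ is the direction of the support and $Z\subset \PP^1\times\PP^1$ is a length-$2$ subscheme in the product of the second and third factors. I will work throughout with this identification, and with the coordinate description of lines of tridegree $(0,1,0)$: by Lemma \ref{a1} and Remark \ref{tr1}, every such line is $R_{s,t}=\{s\}\times\PP^1\times\{t\}$ for a unique $(s,t)\in\PP^1\times\PP^1$ (the second factor being swept out). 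Let $q\colon \PP^1\times\PP^1\to\PP^1$ denote the projection onto the third factor, so that its fiber over $t$ is $\PP^1\times\{t\}$.

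First I would compute the scheme-theoretic intersection. Since $A=\PP^1\times Z$ and $R_{s,t}=\{s\}\times(\PP^1\times\{t\})$, a direct check using the product structure gives $A\cap R_{s,t}=\{s\}\times\bigl(Z\cap(\PP^1\times\{t\})\bigr)$, the intersection on the right taken inside $\PP^1\times\PP^1$ against the fiber $q^{-1}(t)=\PP^1\times\{t\}$. Hence $\deg(A\cap R_{s,t}) = \deg\bigl(Z\cap q^{-1}(t)\bigr)$, which is at most $\deg Z = 2$ (so $\deg(A\cap R)\ge 2$ indeed forces $\deg(A\cap R)=2$, as in Lemma \ref{bp02}(i)), with equality precisely when $Z\subset q^{-1}(t)$. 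For reduced $Z$ this says both points have the same third coordinate $t$; for non-reduced $Z$, a length-$2$ scheme at one point with a tangent direction, it says the tangent direction is that of the fiber.

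Next I would match this with the existence of $Q$. A member of $|\Oo_X(0,0,1)|$ is a fiber $Q_t=\PP^1\times\PP^1\times\{t\}$ of $\pi_3$, and $A=\PP^1\times Z\subset Q_t$ if and only if $Z\subset q^{-1}(t)$, which is exactly the condition just found. This yields the asserted equivalence between the existence of a line $R$ of tridegree $(0,1,0)$ with $\deg(A\cap R)=2$ and the existence of $Q\in|\Oo_X(0,0,1)|$ containing $A$. Uniqueness of $Q$ is then immediate: distinct fibers $q^{-1}(t)$ are disjoint, so the nonempty $0$-dimensional scheme $Z$ lies in at most one of them; this forces a single admissible value $t=t_0$ and hence a single $Q=Q_{t_0}$. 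Since $R_{s,t_0}$ has third coordinate $t_0$, it too lies in $Q_{t_0}$, giving $A\cup R\subset Q$.

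Finally, the count of lines: once $A\subset Q_{t_0}$, the computation above shows $\deg(A\cap R_{s,t})=2$ holds exactly for $t=t_0$ and \emph{arbitrary} $s\in\PP^1$. Thus the admissible lines are precisely $\{R_{s,t_0}:s\in\PP^1\}$, a $\PP^1$ and so a $1$-dimensional family. I expect no serious obstacle; the only point needing care is the scheme-theoretic length count for a non-reduced (double-point) $Z$, where one must check that $\deg(Z\cap q^{-1}(t))=2$ really forces $Z$ into the fiber rather than $Z$ merely meeting it, which is exactly where the length-$2$ hypothesis and the tangent-direction description of $Z$ are used.
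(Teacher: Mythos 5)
Your proof is correct, but it takes a different route from the paper's. The paper argues by cases: for $A$ a disjoint union of two lines the statement is immediate, and for $[A]\in \Dd _2$ it works with the triple $(L,f,g)$ and a pointwise analysis at the intersection point $p=R\cap L$ --- the condition $\deg (A\cap R)=2$ is read as the vanishing at $p$ of the component of the ribbon's normal direction transverse to $R$, and since for $a=2$ that component is a \emph{constant} section of $\Oo _L$, vanishing at one point forces vanishing everywhere, hence $A\subset \PP^1\times \PP^1\times \{p_3\}$. You instead invoke the product description $A=\PP^1\times Z$ from the Claim in the proof of Theorem \ref{o1}(3) (which the paper does assert for reduced and non-reduced $A$ alike), and reduce everything to the one-line computation $A\cap R_{s,t}=\{s\}\times \bigl(Z\cap q^{-1}(t)\bigr)$ together with the observation that a closed subscheme of $Z$ of length $2=\deg Z$ must equal $Z$. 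This buys you a uniform treatment of both cases, an explicit description of the admissible lines as $\{R_{s,t_0}\}_{s\in \PP^1}$ (so the $1$-dimensional family claim, which the paper only asserts, comes for free), and it avoids the pointwise tangent-direction argument entirely; the cost is that you lean on the Claim of Theorem \ref{o1}(3), which the paper establishes only by an indirect dimension count, whereas the paper's proof of this lemma is self-contained modulo the $(L,f,g)$ parametrization. Both arguments are sound.
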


\begin{proof}
The lemma is obvious if $A$ is a disjoint union of two lines, say $A = \PP^1\times \{(o_2, o_3)\}\cup \PP^1\times \{(p_2, p_3)\}$, because the existence of $R$ is equivalent to $o_3=p_3$. Now assume $[A]\in \Dd _2$, say associated to $(L,f,g)$ with $L:= A_{\mathrm{red}}$ and $(f,g)\in \CC^2\setminus \{(0,0)\}$.  Write $L =\PP^1\times \{(p_2,p_3)\}$.
Since $\PP^1\times \PP^1\times \{p_3\}$ is the only element of $|\Oo _X(0,0,1)|$ containing $L$, the uniqueness part is obvious. Assume the existence of a line $ R \subset X$ of tridegree $(0,1,0)$ with $\deg (A\cap R)\ge 2$. By Lemma \ref{bp02} we have $\deg (A\cap R) =2$ and $R\cap L$ contains a point, say $p = (p_1,p_2,p_3)$. For each point $q = (q_1,p_2,p_3)\in L$ the pull-backs via the projections $\pi _i$ for $i=2,3$, of a non-zero tangent vector of $\PP^1$ at $p_i$ form a basis of $N_{L,q} \cong \CC^2$. Since $A$ has tridegree $(2,0,0)$, so the map $\pi _{2|A}$ is induced by an element of $H^0(\Oo _A)$, i.e. by an element $c\in H^0(\Oo_L)$ due to (\ref{eqcc1}) with $a=2$ and $\pi_2(L)=\{p_2\}$. The condition $\deg (A\cap R) =2$ is equivalent to saying that $\pi _{2|A}$ vanishes at $p$. Since $c$ is a constant, $\pi _{2|A}$ vanishes at all points of $L$, i.e. $A \subset \PP^1\times \PP^1\times \{p_3\}$.
\end{proof}


\section{Irreducibility of Hilbert schemes}

\begin{theorem}\label{bo02}
We have $$\mathbf{H}(2,1,0,1)_+ \cong \PP^5\times \PP^1.$$
\end{theorem}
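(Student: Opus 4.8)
The plan is to show that every curve parametrized by $\mathbf{H}(2,1,0,1)_+$ is an effective divisor of class $(1,2)$ inside a fibre of $\pi_3$, and that these divisors are organized by a trivial $\PP^5$-bundle over $\PP^1$, which is then globally a product.

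First I would establish the set-theoretic description. Let $[C]\in\mathbf{H}(2,1,0,1)_+$. Since $C$ has tridegree $(2,1,0)$, the projection $\pi_{3|C}\colon C\to\PP^1$ has degree $e_3=0$ and hence finite image, exactly as in the proof of Lemma \ref{a1}; thus every irreducible component of $C$ lies in a fibre $S_t:=\pi_3^{-1}(t)\cong\PP^1\times\PP^1$ of $\pi_3$. A short numerical check shows that $C$ is \emph{connected}: writing $C$ as a disjoint union of two locally CM curves with no isolated points and adding their tridegrees and Euler characteristics forces $\chi(\Oo_C)\ge 2$, against $\chi(\Oo_C)=1$. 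As distinct fibres are disjoint, connectedness pins $C$ inside a single $S_t$, with $t$ uniquely determined as the reduced scheme-theoretic image $\pi_3(C)$. On the smooth surface $S_t$ a pure one-dimensional subscheme without embedded points is an effective Cartier divisor, and its intersection numbers with $\Oo_{S_t}(1,0)$ and $\Oo_{S_t}(0,1)$, read off from the tridegree, force $C\in|\Oo_{S_t}(1,2)|$. Conversely every divisor in $|\Oo_{S_t}(1,2)|$ is locally CM with no isolated point, has tridegree $(2,1,0)$, and satisfies $\chi(\Oo_C)=1$ by the ideal sequence on $S_t$; so the points of $\mathbf{H}(2,1,0,1)_+$ are exactly these divisors.

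Next I would realize this family globally. With $\pi_3\colon X\to\PP^1$ and $\Oo_X(1,2,0)=\pi_{12}^*\Oo_{\PP^1\times\PP^1}(1,2)$, cohomology and base change give $\pi_{3*}\Oo_X(1,2,0)\cong\Oo_{\PP^1}^{\oplus 6}$, a trivial bundle. Hence the relative linear system of $\Oo_X(1,2,0)$ along $\pi_3$ is a $\PP^5$-bundle over $\PP^1$ isomorphic to $\PP^5\times\PP^1$, carrying a universal divisor $\Dd\subset X\times(\PP^5\times\PP^1)$ whose fibre over $([\lambda],t)$ is the corresponding member of $|\Oo_{S_t}(1,2)|$. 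Flatness of $\Dd$ and the computation above make $\Dd$ a flat family of curves with Hilbert tripolynomial $2x+y+1$, inducing a morphism $\Phi\colon\PP^5\times\PP^1\to\mathbf{H}(2,1,0,1)_+$ that is bijective by the first paragraph.

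Finally I would upgrade $\Phi$ to an isomorphism by a deformation computation. For $C\subset S_t\subset X$ one has $0\to N_{C/S_t}\to N_{C/X}\to N_{S_t/X}|_C\to 0$ with $N_{C/S_t}\cong\Oo_{S_t}(1,2)|_C$ and $N_{S_t/X}|_C\cong\Oo_C$, the fibre $S_t$ having trivial normal bundle. The ideal sequences on $S_t$ give $h^0(\Oo_C)=1$, $h^1(\Oo_C)=0$, $h^0(\Oo_{S_t}(1,2)|_C)=5$ and $h^1(\Oo_{S_t}(1,2)|_C)=0$, whence $h^0(N_{C/X})=6$ and $h^1(N_{C/X})=0$. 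Thus $\mathbf{H}(2,1,0,1)_+$ is smooth of dimension $6$ at every point, and the same sequence identifies $T_{([\lambda],t)}(\PP^5\times\PP^1)=H^0(N_{C/S_t})\oplus T_t\PP^1$ with $H^0(N_{C/X})$ compatibly with $d\Phi$, so $\Phi$ has everywhere invertible differential. A bijective morphism between smooth projective varieties with everywhere invertible differential is an isomorphism, giving $\mathbf{H}(2,1,0,1)_+\cong\PP^5\times\PP^1$. The main obstacle is the rigidity needed in the first paragraph, namely ruling out curves spread over several fibres and controlling non-reduced members of $|\Oo_{S_t}(1,2)|$, together with promoting the set-theoretic bijection to a scheme isomorphism; the triviality of $\pi_{3*}\Oo_X(1,2,0)$ and the normal-bundle cohomology are then routine.
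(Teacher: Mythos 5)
Your overall strategy --- identify every curve in $\mathbf{H}(2,1,0,1)_+$ with a divisor of class $(1,2)$ in a fibre of $\pi_3$ and then exhibit the relative linear system as the trivial $\PP^5$-bundle over $\PP^1$ --- is the same as the paper's, and your second and third paragraphs (triviality of $\pi_{3*}\Oo_X(1,2,0)$, the normal-bundle computation giving smoothness in dimension $6$) are correct and in fact more explicit than what the paper writes down. The gap is in the first paragraph, at the step ``connectedness pins $C$ inside a single $S_t$.'' What follows from $e_3=0$ is only that $\pi_3$ is constant on each irreducible component of $C_{\mathrm{red}}$, hence, by connectedness, that $C_{\mathrm{red}}\subset S_t$ for a single $t$. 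For a non-reduced $C$ this does \emph{not} imply $C\subset S_t$: by Lemma \ref{a1} and Theorem \ref{o1} such a $C$ is $A\cup R$ with $[A]\in \Dd_a$ a double line supported on $L=\PP^1\times\{(p_2,p_3)\}$, and for $a=2$ one has $A\cong L\times Z$ with $Z\subset\PP^1\times\PP^1$ of length $2$ supported at $(p_2,p_3)$; if $Z$ points along the third factor, then $A_{\mathrm{red}}\subset S_{p_3}$ but $A\not\subset S_{p_3}$, because the scheme-theoretic image $\pi_3(A)$ is a double point. You flag this as ``the main obstacle'' at the end, but flagging it is not resolving it, and it is exactly the nontrivial content of the theorem.

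What closes the gap is Euler-characteristic bookkeeping plus a geometric argument. Writing $C=A\cup R$ with $[A]\in\Dd_a$, $a\ge 2$, and $R$ a line of tridegree $(0,1,0)$, one has $\chi(\Oo_C)=\chi(\Oo_A)+1-\deg(A\cap R)=a+1-\deg(A\cap R)$ and $\deg(A\cap R)\le 2$ by Lemma \ref{bp02}, which forces $a=2$ and $\deg(A\cap R)=2$. The paper's Lemma \ref{bo01} then shows that $\deg(A\cap R)=2$ for a $(0,1,0)$-line $R$ is precisely the condition that the normal direction of the ribbon $A$ at the point $A\cap R$ is tangent to the fibre of $\pi_3$; since for $a=2$ this direction is constant along $L$ (the map $\pi_{2|A}$ is induced by a constant section of $\Oo_L$ via the split sequence (\ref{eqcc1})), it follows that $A\subset\PP^1\times\PP^1\times\{p_3\}$, and only then is $C$ a $(1,2)$-divisor in $S_{p_3}$. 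You need to supply this step (or invoke Lemma \ref{bo01}); without it the set-theoretic bijection with $\PP^5\times\PP^1$, and hence everything downstream, is not established.
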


\begin{proof}
It is sufficient to prove that for each $[C]\in \mathbf{H}(2,1,0,1)_+$ there is $Q\in |\Oo _X(0,0,1)|$ such that $C\in |\Oo _Q(1,2)|$, which would give us a morphism from $\mathbf{H}(2,1,0,1)_+ $ to $\PP^5 \times \PP^1$. Its inverse map is obviously defined. If $C$ is reduced, then $\pi_{2|C}$ shows that each irreducible component of $C$ is smooth and rational. Since $\chi (\Oo _C)=1$, so $C$ is connected. Since $C$ is reduced, connected and of tridegree $(2,1,0)$, the scheme $\pi _3({C})$ is a point and so there is a point $o\in \PP^1$ such that $C \subset \PP^1\times \PP^1\times \{o\}$. Now assume that $C$ is not reduced. By Lemma \ref{a1} and Theorem \ref{o1} we see that $C = A\cup R$ with $[A]\in \Dd _a$ for $a\ge 2$ and $R$ a line. Since $\deg (A\cap R) \le 2$ and $\chi (\Oo _C) =\chi (\Oo _A)+1-\deg (A\cap R)$, so we get $a=2$ and $\deg (A\cap R) =2$. We may now apply Lemma \ref{bo01}.
\end{proof}

\begin{remark}\label{bo03}
The proof of Theorem \ref{bo02} shows that for each $[C]\in \mathbf{H}(2,1,0,1)_+$ there is $Q\in |\Oo _X(0,0,1)|$ such that $C\in |\Oo _Q(1,2)|$. Hence we have $h^i(\Ii _C(u,v,w)) = h^i(\Ii _{C'}(u,v,w))$ for all $i\in \{1,2,3\}$, $(u,v,w)\in \ZZ^{\oplus 3}$ and $[C],[C']\in \mathbf{H}(2,1,0,1)_+$.
\end{remark}

\begin{lemma}\label{bo04}
For a fixed $[A]\in \Dd _a$ with $a\ge 3$, define $\Ss$ to be the set of all lines $R\subset X$ with tridegree $(0,0,1)$ and $\deg (A\cap R)\ge 2$. Then $\Ss$ is a non-empty finite set and $\deg (A\cap R) =2$ for all $R\in \Ss$.
\end{lemma}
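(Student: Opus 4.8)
The plan is to analyze lines $R$ of tridegree $(0,0,1)$ meeting a fixed ribbon $A$ with $\deg(A\cap R)\ge 2$, by reducing the question to the geometry of the support line $L:=A_{\mathrm{red}}$ and the associated pencil data. Write $L=\PP^1\times\{(p_2,p_3)\}$, where $L$ has tridegree $(1,0,0)$. A line $R$ of tridegree $(0,0,1)$ has the form $R=\{(r_1,r_2)\}\times\PP^1$. By Lemma \ref{bp02}(i) applied with the roles of the second and third factors interchanged, any line $R\ne L$ satisfies $\deg(A\cap R)\le 2$, and $A\cap R=\emptyset$ if and only if $L\cap R=\emptyset$; so the only candidates with $\deg(A\cap R)\ge 2$ are those $R$ meeting $L$, which forces $(r_1,r_2)=(p_1,p_2)$ for some $p_1\in\PP^1$, i.e. $R$ passes through a point $p=(p_1,p_2,p_3)\in L$. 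This already shows every $R\in\Ss$ satisfies $\deg(A\cap R)\le 2$, so the content is to pin down exactly when $\deg(A\cap R)=2$ and to count such $R$.

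The key step is a tangency computation entirely parallel to the proof of Lemma \ref{bo01}. For a point $q=(q_1,p_2,p_3)\in L$, the pull-backs via $\pi_2$ and $\pi_3$ of nonzero tangent vectors of $\PP^1$ at $p_2,p_3$ give a basis of the fibre $N_{L,q}\cong\CC^2$ of the (trivial) normal bundle. The ribbon $A$ is split and, via the split sequence (\ref{eqcc1}) and the remark following Theorem \ref{o1}, is encoded by a pair $(f,g)\in\CC[x_0,x_1]_{a-2}$ with no common zero, determining a normal direction at each point of $L$. For $R$ through $p$ of tridegree $(0,0,1)$, the condition $\deg(A\cap R)=2$ is that the tangent direction of $R$ at $p$ (namely the $\pi_3$-direction) lies in the normal direction of $A$ prescribed by $(f,g)$ at $p$. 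I would show this amounts to the vanishing at the point $p\in L\cong\PP^1$ of one of the two defining forms (the one corresponding to the $\pi_3$-normal coordinate). Since $a\ge 3$, this form has degree $a-2\ge 1$ and is not identically zero, so it has only finitely many zeros on $\PP^1$, each giving exactly one point $p$ and hence exactly one line $R\in\Ss$ through it. Thus $\Ss$ is finite.

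Non-emptiness follows because a nonzero form of positive degree on $\PP^1$ has at least one zero, producing at least one admissible $R$; I would note this uses $a\ge 3$ in an essential way, since for $a=2$ the pair $(f,g)$ consists of constants and the relevant coordinate form need not vanish anywhere, consistently with Lemma \ref{bo01} where the existence of a meeting line of tridegree $(0,1,0)$ is instead equivalent to $A$ lying in an $|\Oo_X(0,0,1)|$-divisor. The main obstacle is making the tangency criterion precise: I must correctly identify which of the two normal coordinates governs lines of tridegree $(0,0,1)$ (versus $(0,1,0)$), and verify that the splitting data $(f,g)$ cannot make both coordinate forms vanish simultaneously at a point of $L$ — but this is exactly the no-common-zero condition on $(f,g)$, which guarantees that at each $p\in L$ the prescribed normal direction is well defined and meets the $\pi_3$-direction in the required way at only finitely many $p$. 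Once the criterion is identified, the finiteness and non-emptiness, together with the bound $\deg(A\cap R)\le 2$ from part (i) of Lemma \ref{bp02}, give all three assertions.
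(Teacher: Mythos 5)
Your argument is correct, but it follows a different route from the paper's own proof. The paper establishes non-emptiness by comparing Euler characteristics: every $B\in |\Oo _{\PP^1\times \PP^1}(0,2)|$ has $\chi (\Oo _B)=2<a$, so $\pi _{12|A}$ cannot be an embedding, and since it is injective on $L=A_{\mathrm{red}}$ it must fail to separate tangent vectors at some point; the fibre direction of $\pi _{12}$ there is a line of tridegree $(0,0,1)$ meeting $A$ doubly. Finiteness is then obtained by contradiction: if $\Ss$ were infinite, every tangent plane of $A$ would lie in $Q=\pi _{12}^{-1}(\pi _{12}(L))$, forcing $\deg (A\cap T)\le 1$ for all lines $T$ of tridegree $(0,1,0)$, hence $\pi _{13|A}$ would be an embedding, contradicting the same $\chi$ computation. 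You instead work with the explicit ribbon data $(L,f,g)$, reduce the tangency condition of Lemma \ref{bp02}(iv) to the vanishing of one of the two degree-$(a-2)$ forms, and count zeros on $\PP^1$; this is precisely the mechanism the paper itself uses in Remark \ref{zz1} for lines of tridegree $(0,1,0)$. Your approach buys more than the statement asks for, namely the bound $1\le \#\Ss \le a-2$ with equality for general $A$, while the paper's argument is coordinate-free and avoids committing to which of $f,g$ governs which normal direction. One small point to make explicit in your write-up: the relevant form cannot be identically zero, since otherwise the other form would have to be nowhere vanishing on $\PP^1$, which is impossible in degree $a-2\ge 1$; this is the only place the no-common-zero condition is genuinely needed, and it is also exactly where the hypothesis $a\ge 3$ enters, as you correctly observe in contrast with Lemma \ref{bo01}.
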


\begin{proof}
Set $L:= A_{\mathrm{red}}$. We have $\chi (\Oo _B) =2<a$ for every $B\in |\Oo _{\PP^1\times \PP^1}(0,2)|$ and so $\pi_{12|A}: A \rightarrow \PP^1\times \PP^1$ is not an embedding by \cite[Proposition II.2.3]{Hartshorne}. Thus we have $\Ss \ne \emptyset$. Fix $R\in \Ss$. Lemma \ref{bp02} gives $\deg (A\cap R)=2$ and so there is a unique point $o\in L\cap R$. $R$ is the unique line of tridegree $(0,0,1)$ containing $o$. The condition $\deg (A\cap R)=2$ is equivalent to the condition that the plane $\langle L\cup R\rangle$ is the tangent plane of $A$ at $o$. Hence we have $\deg (A\cap L_1) =1$ only for the line $L_1$ of tridegree $(0,1,0)$ containing $o$. Set $Q:= \pi _{12}(L)\times \PP^1\in |\Oo _X(0,1,0)|$. Assume that $\Ss$ is infinite. We get that $Q$ contains infinitely many tangent planes of $A$ and so each tangent plane of $A$ is contained in $Q$. Therefore we have $\deg (A\cap T)\le 1$ for all lines $T\subset X$ of tridegree $(0,1,0)$ and so $\pi_{13|A}: A\rightarrow \PP^1\times \PP^1$ is an embedding by \cite[Proposition II.2.3]{Hartshorne}. We saw that this is false.
\end{proof}

\begin{remark}\label{zz1}
Let us fix a double line $[A]\in \Dd_a$ with $a\ge 3$, that is associated to the triple $(L,f,g)$ with $L\subset X$ a line of tridegree $(1,0,0)$ and $f,g\in \CC[x_0,x_1]_{a-2}$ with no common zero. For a fixed point $p =(o_1,o_2,o_3)\in L$ and the line $R\subset X$ of tridegree $(0,1,0)$ passing through $p$, we have $1\le \deg (R\cap A)\le 2$. Indeed we have $\deg (R\cap A)=2$ if and only if $f$ vanishes at $p$. Since $a>2$, there exists at least one line $R$ with this property and at most
$(a-2)$ such lines exist. If $f$ is general (and in particular if $A$ is general), then $f$ has $(a-2)$ distinct zeros and so there are exactly $(a-2)$ lines $R$ of tridegree $(0,1,0)$ with $\deg (A \cap R)=2$.  
\end{remark}

\begin{theorem}\label{bo05}
For each integer $a\ge 4$, we have
$$\mathbf{H}(2,1,0,a)_{+,\mathrm{red}} =\Ss _0\cup \Ss _1 \cup \Ss _2,$$
where we have $[C]\in \Ss _i$  for $i\in \{0,1,2\}$ if and only if $C = A\cup R$ where $[A]\in \Dd _{a+i-1}$ and $R$ is a line of tridegree $(0,1,0)$ with $\deg (A\cap R)=i$. Furthermore we have
\begin{itemize}
\item $\overline{\Ss_0}$ is an irreducible component of $\mathbf{H}(2,1,0,a)_{+, \mathrm{red}}$ with $\dim (\Ss _0) =2a-1$;
\item $\Ss_2$ is irreducible with $\dim (\Ss _2) = 2a+1$;
\item $\Ss_1$ is irreducible with $\dim (\Ss _1) = 2a$ and $\Ss _1\subset \overline{\Ss _2}$;
\item $\overline{\Ss_0}$ and $\overline{\Ss _2}$ are the irreducible components of $\mathbf{H}(2,1,0,a)_{+,\mathrm{red}}$.
\end{itemize}
\end{theorem}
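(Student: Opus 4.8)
The plan is to (1) establish the set-theoretic decomposition, (2) compute the dimension and prove the irreducibility of each stratum, (3) show $\overline{\Ss_0}$ is a component by a connectedness argument, and (4) prove $\Ss_1\subset\overline{\Ss_2}$ by an explicit degeneration, which I expect to be the crux.

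\textbf{Decomposition.} I would start from $[C]\in \mathbf{H}(2,1,0,a)_{+}$. Since the second tridegree is $1$, the map $\pi_{2|C}\colon C\to\PP^1$ has degree one, so exactly one reduced component $C_0$ of $C$ dominates the second factor, with multiplicity one; by $e_3=0$ and Lemma \ref{a1} its tridegree is $(d_1,1,0)$ with $d_1\in\{0,1,2\}$. If $d_1=2$ then $C=C_0$ is irreducible of tridegree $(2,1,0)$, and if $d_1=1$ then $C=C_0\cup E$ with $C_0$ a $(1,1,0)$-conic and $E$ a $(1,0,0)$-line; a short Euler-characteristic computation gives $\chi(\Oo_C)\le 2$ in both cases, excluded by $a\ge 4$. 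Hence $d_1=0$, so $C_0=:R$ is a line of tridegree $(0,1,0)$ and $A:=\overline{C\setminus R}$ is locally CM of tridegree $(2,0,0)$ (no embedded points, since $C$ has none). A reduced such $A$ would be two disjoint lines, forcing $\chi(\Oo_C)\le 3<a$; so $A$ is a double line, $[A]\in\Dd_{a'}$. Putting $i:=\deg(A\cap R)\in\{0,1,2\}$ (Lemma \ref{bp02}(i)) and using $\chi(\Oo_C)=\chi(\Oo_A)+1-i$ gives $a'=a+i-1$, i.e. $[C]\in\Ss_i$.

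\textbf{Dimension and irreducibility of the strata.} For $i\in\{1,2\}$ I would fibre $\Ss_i$ over the irreducible rational base $B=\{(L,p):L\text{ a }(1,0,0)\text{-line},\ p\in L\}$ of dimension $3$, the point $p$ determining the $(0,1,0)$-line $R$ through it; the fibre over $(L,p)$ consists of the forms $(f,g)$ of degree $a+i-3$ defining $[A]\in\Dd_{a+i-1}$ on $L$, subject to $f(p)\ne 0$ when $i=1$ (open) and $f(p)=0$ when $i=2$ (one linear condition), using Remark \ref{zz1}. For $i=0$ I fibre instead over $\{(L,R):R\cap L=\emptyset\}$. In each case the fibre is irreducible (open in a projective space), so $\Ss_i$ is irreducible and rational; a parameter count with $\dim\Dd_{a'}=2a'-1$ (Theorem \ref{o1}) yields $\dim\Ss_0=2a-1$, $\dim\Ss_1=2a$, $\dim\Ss_2=2a+1$, and since $[C]$ recovers the pair $(A,R)$ the assignment is birational onto $\Ss_i$.

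\textbf{$\overline{\Ss_0}$ is a component.} Every curve in $\Ss_0$ is the disjoint union of the connected double line $A$ and the line $R$, hence disconnected, whereas every curve in $\Ss_1\cup\Ss_2$ is connected. Connectedness is preserved under specialization in a flat proper family (by Stein factorization: over a complete DVR the Stein algebra is finite and torsion-free, hence free, and lifting idempotents keeps the special fibre connected), so the general, disconnected, point of $\Ss_0$ cannot lie in $\overline{\Ss_1\cup\Ss_2}$. Thus $\overline{\Ss_0}$ is a maximal irreducible closed subset, i.e. an irreducible component.

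\textbf{$\Ss_1\subset\overline{\Ss_2}$ and conclusion.} Since $\Ss_1$ is irreducible it suffices to realise its general member as a flat limit of $\Ss_2$-curves. I would fix $L$, a point $p=(x_0,0,0)\in L$ with the $(0,1,0)$-line $R$ through it, and forms $f,g$ of degree $a-2$ with $f(x_0)\ne 0$, defining $[A]\in\Dd_a$ and the target $C_0=A\cup R\in\Ss_1$. Working in local coordinates with $L=\{y=z=0\}$, for $t\ne 0$ take $A_t'\in\Dd_{a+1}$ cut out by $(y^2,yz,z^2,\ f_t'y-g_t'z)$ with $f_t'=(x-x_0)f$ and $g_t'=(x-x_0)g+t\,h$ for a general form $h$ of degree $a-1$; then $f_t'(x_0)=0$, so $A_t'$ is tangent to $R$ and $C_t':=A_t'\cup R\in\Ss_2$. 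As $t\to 0$ the pair $(f_t',g_t')$ acquires the common factor $(x-x_0)$, the thickening direction tends to $(g,f)$ away from $p$, and the flat limit of $A_t'$ becomes $A$ together with one embedded point at $p$ (forced by the constant value $\chi=a+1$); this embedded point is absorbed by $R$, so the flat limit of $C_t'$ contains $A\cup R$ and has $\chi=a=\chi(\Oo_{A\cup R})$, hence equals $A\cup R\in\Ss_1$. This gives $\Ss_1\subset\overline{\Ss_2}$, whence $\mathbf{H}(2,1,0,a)_{+,\mathrm{red}}=\overline{\Ss_0}\cup\overline{\Ss_2}$; as $\overline{\Ss_2}$ is irreducible of dimension $2a+1>2a-1=\dim\overline{\Ss_0}$ it is not contained in $\overline{\Ss_0}$, so $\overline{\Ss_0}$ and $\overline{\Ss_2}$ are exactly the irreducible components. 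I expect the main obstacle to be precisely this last degeneration: verifying that the flat limit is the transverse curve $A\cup R\in\Ss_1$, rather than a tangent configuration in $\Ss_2$ or a curve retaining an embedded point, which is controlled by the bookkeeping of $\chi$ and the absorption of the embedded point of $\lim A_t'$ into $R$.
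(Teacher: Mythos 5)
Your proposal is correct and follows essentially the same route as the paper: the same reduction to $C=A\cup R$ via Lemma \ref{a1} and Theorem \ref{o1} with $\chi(\Oo_C)=\chi(\Oo_A)+1-\deg(A\cap R)$, the same dimension counts and incidence-variety irreducibility for the strata, the same connectedness argument showing $\Ss_0\nsubseteq\overline{\Ss_1\cup\Ss_2}$, and an explicit degeneration of the defining pair $(f,g)$ into degree-$(a-1)$ pairs acquiring a common zero at $p$ to get $\Ss_1\subset\overline{\Ss_2}$. Your degeneration ($f'_t=(x-x_0)f$, $g'_t=(x-x_0)g+th$ with $R$ fixed) is a minor variant of the paper's ($(z-u)f_1$, $(z-v)g_1$ with $(u,v)\to(p,p)$), and your identification of the flat limit — it contains $A\cup R$ and has the same Hilbert polynomial $3t+a$, hence equals it — is a clean justification of the step the paper only asserts.
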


\begin{proof}
Fix $[C]\in \mathbf{H}(2,1,0,a)_{+,\mathrm{red}}$ and then $C$ is not reduced, because we assumed $a \ge 4$. By Lemma \ref{a1} and Theorem \ref{o1} we have $C = A\cup R$ where $[A]\in \Dd _c$ for some $c\ge 2$ and $R$ is a line of tridegree $(0,1,0)$ with $\deg (A\cap R) = c+1-a$. Since
$0 \le \deg (A\cap R) \le 2$, we have $c \in \{a-1,a,a+1\}$ and so we get a set-theoretic decomposition $\mathbf{H}(2,1,0,1)_{+,\mathrm{red}} =\Ss _0\cup \Ss _1 \cup \Ss _2$. 

Now we check that $\Ss _0$ and $\Ss _1$ are irreducible. For $[A]\in \Dd _{a-1}$, the set of all lines $R$ with $R\cap A =\emptyset$ is a non-empty open subset of $\PP^1\times \PP^1$. By Theorem \ref{o1}, $\Ss _0$ is non-empty, irreducible, rational and of dimension $2(a-1)-1+2=2a-1$. For $[A]\in \Dd _a$, set $L:= A_{\mathrm{red}}$ and then a line $R$ of tridegree $(0,1,0)$ satisfies $\deg (A\cap R)>0$ if and only if $R\cap L \ne \emptyset$. By Lemma \ref{bo04} the set of all such lines $R$ is a non-empty smooth rational curve. Hence $\Ss _1$ is rationally connected, irreducible and of dimension $2a$. Since the support of each element of $\Ss _1\cup \Ss _2$ is connected, we have
$\Ss _0\nsubseteq \overline{\Ss _1\cup \Ss _2}$, and so $\overline{\Ss _0}$ is an irreducible component of $\mathbf{H}(2,1,0,a)_{+,\mathrm{red}}$. 

For $[A]\in \Dd _{a+1}$, Remark \ref{zz1} shows that the set of all lines of tridegree $(0,1,0)$ with $\deg (A\cap R)=2$ is non-empty and finite. So we get $\Ss _2 \ne \emptyset$ and each irreducible component of $\Ss _2$ has dimension $2a+1$.

\quad {\emph {Claim}}: $\Ss _2$ is irreducible.

\quad {\emph {Proof of Claim}}: Let $\II$ be the set of all pairs $(A,p)$ with $[A]\in \Dd _{a+1}$, $p\in A_{\mathrm{red}}$ and there is a line $R\subset X$ of tridegree $(0,1,0)$ with $p\in R$ and $\deg (R\cap A)=2$. Then it is sufficient to prove that $\II$ is irreducible. For a fixed line $L\subset X$ of tridegree $(1,0,0)$ and $g\in \CC[x_0, x_1]_{a-1}$ with $g\ne 0$, we define $U(L,g)$ be the set of all $[A]\in \Dd_{a+1}$ associated to a triple $(L,f,g)$ for some $f$, and let $\II_{L,g}$ be the set of all pairs $(A,p)$ with $[A]\in U(L,g)$, $p\in L$ and there is a line $R\subset X$ of tridegree $(0,1,0)$ with $\deg (R\cap A)=2$. The irreducibility of $\II _{L,g}$ is equivalent to the well-known irreducibility of the set of all pairs $(f,p)$ with $p\in \PP^1$, $f\in \CC[x_0,x_1]_{a-1}\setminus \{0\}$ vanishing at $p$. Thus $\II$ is irreducible and so is $\Ss_2$. \qed

Now it remains to prove that $\Ss_1\subset \overline{\Ss _2}$. Fix a general $A\cup R \in \Ss _1$ with $A$ associated to a triple $(L,f,g)$ and set $\{p\}:= R\cap L$. Since $\deg (R\cap A) =1$, we have $f({p})\ne 0$. For a general $A\cup R$ we may assume that $g({p}) \ne 0$. Let $S\subset L$ be a finite set containing all zeros of $f$ and $g$ and the point $x_0=0$, but with $p\notin S$. Set $z:= x_1/x_0$ and let $f_1,g_1$ the elements of $\CC[z]$ obtained by dehomogenizing $f$ and $g$. For a general $A$ we may assume that $\deg (f_1)=\deg (g_1) =a-2$. Let $\Delta$ denote the diagonal of $(L\setminus S)\times (L\setminus S)$. For all $(u,v)\in ((L\setminus S)\times (L\setminus S))\setminus \Delta$, set $f_{u,v}:= (z-u)f_1$ and $g_{u,v}:= (z-v)g_1$. Let $\tilde{f}_{u,v}(x_0,x_1)$ (resp. $\tilde{g}_{u,v}(x_0,x_1)$) be the homogeneous polynomial associated to $f_{u,v}$ (resp. $g_{u,v}$). Let $A_{u,v}$ denote the element of $\Dd _{a+1}$ associated to $(L,\tilde{f}_{u,v},\tilde{g}_{u,v})$ and let $R_u$ the line of tridegree $(0,1,0)$ passing through the point of $L$ associated to $u$. We got a flat family $\{A_{u,v}\cup R_u\}_{u\in \Delta}$ of elements of $\Ss _2$. As $(u,v)$ tends to $(p,p)$, we get that $A\cup R$ is a flat limit of this family.
\end{proof}

\begin{theorem}\label{bo06} 
We have
$$\mathbf{H}(2,1,0,3)_{+,\mathrm{red}} =\Tt  \cup \Tt_1\cup \Tt_2,$$
where each curve $[C]\in \Tt_i$ for $i\in \{1,2\}$ is of the form $A \cup R$ where $[A]\in \Dd_{i+2}$ and $R$ is a line of tridegree $(0,1,0)$ with $\deg (A \cap R)=i$. A general element of $\Tt$ is a disjoint union of three lines. Furthermore we have
\begin{itemize}
\item $\overline{\Tt}$ is an irreducible component of $\mathbf{H}(2,1,0,3)_{+, \mathrm{red}}$ with $\dim (\Tt)=6$; 
\item $\Tt_2$ is an irreducible with $\dim (\Tt_2)=7$;
\item $\Tt_1$ is irreducible with $\dim (\Tt_1)=6$ and $\Tt_1 \subset \overline{\Tt_2}$; 
\item $\overline{\Tt}$ and $\overline{\Tt_2}$ are the irreducible components of $\mathbf{H}(2,1,0,3)_{+, \mathrm{red}}$. 
\end{itemize}
\end{theorem}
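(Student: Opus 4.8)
The plan is to follow the strategy of Theorem~\ref{bo05} almost verbatim, the only genuinely new feature being the stratum $\Tt$, which for $a=3$ consists generically of \emph{reduced} curves rather than double lines. I first establish the set-theoretic decomposition. Fix $[C]\in \mathbf{H}(2,1,0,3)_{+,\mathrm{red}}$. If $C$ is not reduced, then by Lemma~\ref{a1} and Theorem~\ref{o1} we may write $C=A\cup R$ with $[A]\in \Dd_c$ and $R$ a line of tridegree $(0,1,0)$; the relation $\chi(\Oo_C)=\chi(\Oo_A)+1-\deg(A\cap R)=3$ forces $\deg(A\cap R)=c-2$, whence $c\in\{2,3,4\}$ by Lemma~\ref{bp02}. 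The cases $c=3$ and $c=4$ give $\Tt_1$ and $\Tt_2$, while $c=2$ (so $\deg(A\cap R)=0$, with $A$ a ribbon disjoint from $R$) lands in $\Tt$. If instead $C$ is reduced, a $\chi$-count settles everything: each connected component is a reduced connected curve with $\chi\le 1$, so since the $\chi$'s sum to $3$ while the tridegrees sum to $(2,1,0)$, the curve $C$ must be a disjoint union of three lines, necessarily two of tridegree $(1,0,0)$ and one of tridegree $(0,1,0)$. Thus $C\in\Tt$.

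I then describe $\Tt$ uniformly as the image of the incidence variety $\{(A,R)\mid [A]\in\mathbf{H}(2,0,0,2)_+,\ R\text{ a }(0,1,0)\text{-line with }A\cap R=\emptyset\}$ under $(A,R)\mapsto A\cup R$, the decomposition being unique. By Theorem~\ref{o1}(3) the base $\mathbf{H}(2,0,0,2)_+$ is irreducible of dimension $4$, and the fibre over each $[A]$ is the non-empty open subset of the $\PP^1\times\PP^1$ of $(0,1,0)$-lines missing $A$; hence $\Tt$ is irreducible of dimension $6$, with general member three disjoint lines, since $\EE$ is open dense in $\mathbf{H}(2,0,0,2)_+$. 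To see that $\overline{\Tt}$ is a component I argue, exactly as in Theorem~\ref{bo05}, by connectedness of the support: every member of $\Tt_1\cup\Tt_2$ has connected support $L\cup R$, whereas the general member of $\Tt$ has disconnected support; since a flat limit of curves with connected support again has connected support (the principle of connectedness, \cite{Hartshorne}), the general member of $\Tt$ cannot lie in $\overline{\Tt_1\cup\Tt_2}$, so $\overline{\Tt}$ is an irreducible component.

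The analysis of $\Tt_1$ and $\Tt_2$ is the $a=3$ instance of the corresponding arguments in Theorem~\ref{bo05}, and I would reproduce them. For $\Tt_2$: with $[A]\in\Dd_4$, Remark~\ref{zz1} gives finitely many (at most $2$) lines $R$ with $\deg(A\cap R)=2$, so $\dim\Tt_2=\dim\Dd_4=7$, and irreducibility follows by showing that the incidence variety of pairs $(A,p)$, with $p$ a zero of the relevant defining form, maps onto $\Tt_2$ from an irreducible source, as in the Claim of Theorem~\ref{bo05}. For $\Tt_1$: with $[A]\in\Dd_3$, the lines $R$ with $\deg(A\cap R)=1$ sweep out a $\PP^1$, so $\dim\Tt_1=\dim\Dd_3+1=6$ and $\Tt_1$ is irreducible. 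The inclusion $\Tt_1\subset\overline{\Tt_2}$ is obtained by the explicit one-parameter degeneration of Theorem~\ref{bo05}: starting from a general $A\cup R\in\Tt_1$ with $A$ associated to $(L,f,g)$ and $\{p\}=R\cap L$ where $f(p)\neq0$, one multiplies $f,g$ by linear factors $(z-u),(z-v)$ to obtain a family $A_{u,v}\cup R_u$ in $\Tt_2$, and lets $(u,v)\to(p,p)$.

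Finally, since $\Tt_1\subset\overline{\Tt_2}$ we obtain $\mathbf{H}(2,1,0,3)_{+,\mathrm{red}}=\overline{\Tt}\cup\overline{\Tt_2}$; both are irreducible, $\overline{\Tt}$ is a component by the paragraph above, and $\overline{\Tt_2}$ is a component because $\dim\Tt_2=7>6=\dim\Tt$ forces $\Tt_2\not\subset\overline{\Tt}$. The main obstacle is the treatment of $\Tt$: one must check that the reduced three-line configurations and the non-reduced $\Dd_2$-plus-disjoint-line configurations assemble into a single irreducible family—which rests squarely on the irreducibility of $\mathbf{H}(2,0,0,2)_+$ from Theorem~\ref{o1}(3)—and then separate $\overline{\Tt}$ from the higher-dimensional $\overline{\Tt_2}$ by connectedness of the support rather than by a dimension count, since the latter does not suffice.
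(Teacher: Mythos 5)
Your proposal is correct and follows essentially the same route as the paper: the same set-theoretic decomposition via Lemma \ref{a1} and Theorem \ref{o1}, the same use of the irreducibility of $\mathbf{H}(2,0,0,2)_+$ (Theorem \ref{o1}(3)) to glue the reduced three-line configurations and the disjoint $\Dd_2$-plus-line configurations into one irreducible $\Tt$, separation of $\overline{\Tt}$ by connectedness of the support, and the verbatim transfer of the $\Tt_1,\Tt_2$ analysis (dimensions, irreducibility via the incidence variety, and the explicit degeneration giving $\Tt_1\subset\overline{\Tt_2}$) from Theorem \ref{bo05}. The only cosmetic difference is that you present $\Tt$ as a fibration over $\mathbf{H}(2,0,0,2)_+$ rather than as $\Aa\cup\Tt_0$ with $\Tt_0\subset\overline{\Aa}$, which amounts to the same argument.
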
 

\begin{proof}
Fix $[C]\in \mathbf{H}(2,1,0,3)_{+,\mathrm{red}}$ and then $C$ is reduced if and only if it is the disjoint union of three lines, two of tridegree $(1,0,0)$ and one of tridegree $(0,1,0)$. The set $\Aa$ of all such curves is a non-empty open subset of $\PP^2\times \PP^2\times \PP^2$ and so $\Aa$ is smooth, irreducible and rational with $\dim (\Aa) =6$. Note that $\Aa$ is not complete since $\Aa \subsetneq \PP^2\times \PP^2\times \PP^2$. 

Now assume that $C$ is not reduced.  By Lemma \ref{a1} and Theorem \ref{o1} we have
$C = A\cup R$ where $[A]\in \Dd _c$ with $c\ge 2$ and $R$ is a line of tridegree $(0,1,0)$ with $\deg (A\cap R) = c-2$. Since $0 \le \deg (A\cap R) \le 2$, so we have $c\in \{2,3,4\}$. Let $\Tt_i$ be the set of all $C =A\cup R$ with $\deg (A\cap R) =i$. As in the proof of Theorem \ref{bo05} we see that $\Tt_i\ne \emptyset$ for all $i$. Set $\Tt:= \Aa \cup \Tt_0$ to be the set of all disjoint unions of an element of $\mathbf{H}(2,0,0,2)_{+,\mathrm{red}}$ and a line of tridegree $(0,1,0)$. Since the reduction of each element of $\Tt_1 \cup \Tt_2$ is connected, so we have $\Tt\not\subseteq \overline{\Tt_1 \cup \Tt_2}$. By the case $a=2$ of Theorem \ref{o1}, $\Tt_0$ is in the closure of $\Aa$ and so $\Tt$ is irreducible. As in the proof of Theorem \ref{bo05}, we get that $\Tt_2$ is irreducible and $\Tt_1 \subset \overline{\Tt_2}$. 
\end{proof}

In $\mathbf{H}(1,1,1,1)$ we have a family of curves formed by three lines through a common point. Denote the locus of such curves by $\mathbf{D}$ and we have $\mathbf{D} \cong X$. 

\begin{lemma}\label{bb3}
$\mathbf{H}(1,1,1,3)_{+,\mathrm{red}}$ is irreducible, smooth and rational of dimension $6$.
\end{lemma}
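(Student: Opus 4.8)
The plan is to identify $\mathbf{H}(1,1,1,3)_{+,\mathrm{red}}$ with the open subvariety of $(\PP^1\times\PP^1)^3$ parametrizing triples of pairwise disjoint lines, one of each tridegree $(1,0,0)$, $(0,1,0)$, $(0,0,1)$, and to read off irreducibility, rationality, and dimension from that identification.

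First I would show that every $[C]$ in this scheme is such a disjoint triple. The fundamental $1$-cycle of a locally CM curve $C$ is $\sum_i m_i[D_i]$, where the $D_i$ are the integral curves underlying its components and $m_i\ge 1$ are the multiplicities; its class in $A^2(X)\cong\ZZ^{\oplus 3}$ is the tridegree. If some $m_i\ge 2$, then $C$ would have tridegree at least $2$ in the factor where $D_i$ has positive degree, contradicting the tridegree $(1,1,1)$ (cf. Lemma \ref{a1}). Hence $C=C_1\cup\cdots\cup C_k$ is reduced with integral components $C_i$. Since degrees add for a reduced curve and each integral component has degree at least $1$, while $\deg C=e_1+e_2+e_3=3$, we get $k\le 3$. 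On the other hand $\chi(\Oo_C)\le\sum_i\chi(\Oo_{C_i})\le k$, where the first inequality is an equality iff the $C_i$ are pairwise disjoint and the second is an equality iff every $C_i\cong\PP^1$. The hypothesis $\chi(\Oo_C)=3$ forces equality throughout: $k=3$, each $C_i\cong\PP^1$ of degree $1$, and the three lines pairwise disjoint. As their tridegrees are standard basis vectors summing to $(1,1,1)$, there is exactly one line of each tridegree.

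Next I would set up the isomorphism with the parameter space. A line of tridegree $(1,0,0)$ is $\PP^1\times\{p\}$ for a unique $p\in\PP^1\times\PP^1$, so such lines are parametrized by $\PP^1\times\PP^1$, and likewise for the other two tridegrees. Ordering the three components of $C$ by tridegree gives a morphism $\mathbf{H}(1,1,1,3)_{+,\mathrm{red}}\to(\PP^1\times\PP^1)^3$; by the classification above it is injective, with image the open locus $\mathcal{U}$ where the three lines are pairwise disjoint, and its evident inverse (take the disjoint union) shows it is an isomorphism onto $\mathcal{U}$. Since $\mathcal{U}$ is a nonempty open subvariety of $(\PP^1\times\PP^1)^3\cong(\PP^1)^{6}$, it is irreducible, rational, and of dimension $6$.

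Finally, smoothness. For $C=L_1\sqcup L_2\sqcup L_3$ one has $N_{C|X}=\bigoplus_{i}N_{L_i|X}$, and by the computation in the proof of Theorem \ref{o1} together with the symmetry among the three factors each $N_{L_i|X}\cong\Oo_{L_i}^{\oplus 2}$ is trivial. Hence $h^1(N_{C|X})=0$ and $h^0(N_{C|X})=6$, so $\mathbf{H}(1,1,1,3)_{+,\mathrm{red}}$ is smooth of dimension $6$ at every point, in agreement with the description above. The main obstacle is the exhaustive classification in the first step, namely ruling out both non-reduced structures and configurations of meeting components that might a priori also realise $\chi=3$; this is precisely what the combined tridegree and Euler-characteristic bookkeeping settles.
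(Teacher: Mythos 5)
Your proof is correct and follows essentially the same route as the paper: reduce to showing every such curve is a disjoint union of three lines, one of each tridegree, via the tridegree constraint (Lemma \ref{a1}) plus the Euler-characteristic count, then identify the parameter space with an open subset of $(\PP^1\times\PP^1)^3$ and get smoothness from $N_C\cong\Oo_C^{\oplus 2}$. If anything you are more explicit than the paper about the inequality $\chi(\Oo_C)\le\sum_i\chi(\Oo_{C_i})\le k$ and its equality cases (the paper's phrase ``$\chi(\Oo_C)=-2$'' is evidently a slip for $p_a(C)=-2$), so no gap to report.
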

\begin{proof}
Fix a curve $[C]\in \mathbf{H}(1,1,1,3)_+$ and then $C$ is reduced by Lemma \ref{a1}. We also have $\chi (\Oo _C)=-2$ and so $C$ has at least three connected components. Thus $C$ is a disjoint union of three lines, one line for each tridegree $(1,0,0)$, $(0,1,0)$ and $(0,0,1)$. Hence set-theoretically $\mathbf{H}(1,1,1,3)_{+,\mathrm{red}}$ is irreducible, rational and of dimension $6$. Since $N_C\cong \Oo _C^{\oplus 2}$, we have $h^1(N_C) = 0$ and so $\mathbf{H}(1,1,1,3)_+$ is smooth.
\end{proof}

\begin{lemma}\label{bb4}
$\mathbf{H}(1,1,1,2)_+$ has three connected components and each of them is smooth and rational of dimension $7$.
\end{lemma}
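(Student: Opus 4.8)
The plan is to first determine the geometry of an arbitrary $[C]\in \mathbf{H}(1,1,1,2)_+$, then to package these curves into three families and to verify that these families are precisely the connected components.

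I would begin by showing that every such $C$ is reduced. Since $C_{\mathrm{red}}\subseteq C$, its tridegree $(b_1,b_2,b_3)$ satisfies $b_i\le 1$, while Lemma \ref{a1} gives $b_i\ge 1$ for each $i$; hence $C_{\mathrm{red}}$ also has tridegree $(1,1,1)$. As the tridegree of $C$ is the sum over its components of (multiplicity)$\,\times\,$(reduced tridegree), every component has multiplicity one, so $C$ is generically reduced, and being locally CM it has no embedded points; thus $C=C_{\mathrm{red}}$. Next, $C$ has total degree $3$, hence at most three connected components; three would force three pairwise disjoint lines and $\chi(\Oo_C)=3$, so in fact $C$ has exactly two connected components, each a connected reduced rational curve with $\chi=1$. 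Their tridegrees are nonzero vectors in $\{0,1\}^{\oplus 3}$ summing to $(1,1,1)$, so they partition $\{1,2,3\}$ into a singleton $\{i\}$ and its complement: one component is a line of the $i$-th standard tridegree $\mathbf{e}_i$ (so $\mathbf{e}_1=(1,0,0)$, etc.) and the other a connected curve of the complementary tridegree, i.e. a (possibly nodal) conic lying in a fibre of $\pi_i$.

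This produces three families $\Bb_1,\Bb_2,\Bb_3$, where $[C]\in\Bb_i$ iff the line component of $C$ has tridegree $\mathbf{e}_i$. I would then show these are the three connected components. Each $\Bb_i$ is the (open, disjointness) locus in the product of the Hilbert scheme of lines of tridegree $\mathbf{e}_i$ (an irreducible surface isomorphic to $\PP^1\times\PP^1$) with the Hilbert scheme of conics of the complementary tridegree (a $\PP^3$-bundle over $\PP^1$, hence irreducible and rational); so each $\Bb_i$ is irreducible and rational, in particular connected. Since every member of $\mathbf{H}(1,1,1,2)_+$ has exactly two connected components, along any flat family the two components and their tridegrees vary in a locally constant manner, so the index $i$ is a deformation invariant; hence the $\Bb_i$ are pairwise disjoint, open and closed. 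As $\mathbf{H}(1,1,1,2)_{+,\mathrm{red}}=\Bb_1\cup\Bb_2\cup\Bb_3$, these are precisely the three connected components, each rational.

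Finally I would prove smoothness and compute the dimension by deformation theory. For $C=\ell\sqcup D$ with $\ell$ a line of tridegree $\mathbf{e}_i$ and $D$ the complementary conic we have $N_{C/X}=N_{\ell/X}\oplus N_{D/X}$, where $N_{\ell/X}\cong\Oo_\ell^{\oplus 2}$ as in the proof of Theorem \ref{o1}, and $D$ is a complete intersection of a fibre of $\pi_i$ with a divisor of the conic type, so that $N_{D/X}$ splits as a sum of two line bundles on $D$. A cohomology computation then gives $h^1(N_{C/X})=0$, so $\mathbf{H}(1,1,1,2)_+$ is smooth along $\Bb_i$ of dimension $h^0(N_{C/X})$, which one evaluates to obtain the stated value. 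The main obstacle is to make this uniform over all of $\Bb_i$: the generic member has $D$ a smooth conic, but at the boundary $D$ degenerates to two lines meeting at a point, and one must check on this nodal rational curve that each line-bundle summand of $N_{D/X}$ has nonnegative degree on both branches and vanishing $H^1$, so that $h^1(N_{C/X})=0$ and the dimension remain constant across $\Bb_i$.
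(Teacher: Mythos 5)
Your proposal follows essentially the same route as the paper's own proof: reducedness via Lemma \ref{a1}, the observation that $\chi(\Oo_C)=2$ forces exactly two connected components with a unique component of degree one, the three families indexed by the tridegree of the line component, irreducibility and rationality of each family from the product description (lines $\cong\PP^1\times\PP^1$, conics a $\PP^3$-bundle over $\PP^1$), and smoothness from $h^1(N_C)=0$ with $N_C=N_L\oplus N_D$. You are in fact slightly more careful than the paper at the boundary where the conic $D$ degenerates to two lines meeting in a point; the paper's identification $N_D\cong\Oo_D\oplus\Oo_D(0,1,1)$ already covers that case because $D$ is a Cartier divisor in the smooth surface $\{o\}\times\PP^1\times\PP^1$, but the paper does not say so. The one genuine weakness is precisely the step you decline to carry out: ``of dimension $h^0(N_{C/X})$, which one evaluates to obtain the stated value.'' Evaluating it gives $h^0(N_L)+h^0(N_D)=2+(1+3)=6$, not $7$; equivalently, the parameter count is $2$ for the line plus $1$ for the fibre $\{o\}$ plus $3$ for the conic in $|\Oo_{\PP^1\times\PP^1}(1,1)|$, again $6$. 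The same discrepancy is present in the paper itself, whose own enumeration of parameters yields $6$ while the lemma asserts $7$. By deferring the computation you inherit this error rather than catching it; with the dimension corrected to $6$, your argument is sound and coincides with the paper's.
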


\begin{proof}
Fix $[C]\in \mathbf{H}(1,1,1,2)_+$ and then $C$ is reduced again by Lemma \ref{a1}. We have $\chi (\Oo _C)=-1$ and so $C$ has at least two connected components. One of these connected components must be a line. Since $\chi (\Oo _C) \ne -2$ and $\deg ({C}) =3$, so $C$ is not the union of three disjoint lines. Hence $C$ has a unique connected component of degree $1$. The three connected components of $\mathbf{H}(1,1,1,2)_+$ are distinguished by the tridegree of their degree $1$ component. 

With no loss of generality we may assume that $C$ has a line $L$ of tridegree $(1,0,0)$ as a connected component, say $C = L\sqcup D$ with $D$ of tridegree $(0,1,1)$. We have $D = \{o\}\times D'$ for a point $o\in \PP^1$ and a conic $D'\in |\Oo _{\PP^1\times \PP^1}(1,1)|$. Since $|\Oo _{\PP^1\times \PP^1}(1,1)|$ is irreducible and of dimension $3$, we get that each connected component of $\mathbf{H}(1,1,1,2)_{+,\mathrm{red}}$ is irreducible, rational and of dimension $7$. Since $N_L \cong \Oo _L^{\oplus 2}$ and $N_D \cong \Oo _D\oplus \Oo _D(1,1)$ with $L\cap D=\emptyset$, so we get $h^1(N_C) =0$ and so $\mathbf{H}(1,1,1,2)_+$ is smooth.
\end{proof}

\begin{proposition}\label{a2}
$\mathbf{H}(1,1,1,1)$ is irreducible, unirational of dimension $6$ and smooth outside $\mathbf{D}$. 
\end{proposition}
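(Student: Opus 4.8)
The plan is to classify the members of $\mathbf{H}(1,1,1,1)$, exhibit the smooth cubics as a dense rational family, and then read off both smoothness and irreducibility from one normal–bundle computation. First I would show that every $[C]\in\mathbf{H}(1,1,1,1)$ is a connected, reduced, purely $1$-dimensional curve of arithmetic genus $0$. Reducedness is immediate from Lemma \ref{a1}: a curve of tridegree $(1,1,1)$ has reduction of tridegree $(1,1,1)$, leaving no room for a multiple component. The key geometric input is that three lines of pairwise distinct tridegrees which meet pairwise are automatically concurrent — a direct coordinate check in $X$ shows the three putative intersection points coincide, so there are no ``triangles''. Combined with the fact that a $(0,0,1)$-line meets a smooth $(1,1,0)$-conic in at most one point, and that an irreducible such curve has each $\pi_i|_C$ birational (hence is a smooth rational cubic), this shows the minimal Euler characteristic of a purely $1$-dimensional curve of tridegree $(1,1,1)$ equals $1$. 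Therefore $\chi(\Oo_C)=1$ forbids embedded or isolated points, so $\mathbf{H}(1,1,1,1)=\mathbf{H}(1,1,1,1)_+$ and $C$ is connected; the possible curves are (A) a smooth rational cubic, (B) a smooth conic meeting a transverse line in one node, (C1) a chain of three lines of distinct tridegrees, and (C2) three concurrent lines, i.e. the locus $\mathbf{D}$.

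For the smooth locus $U:=\mathbf{H}(1,1,1,1)^{\mathrm{sm}}$ (type (A)), each projection $\pi_i|_C\colon C\to\PP^1$ is an isomorphism, so normalizing by $\pi_1$ presents $C$ as the graph $\{(t,\alpha(t),\beta(t))\}$ with $(\alpha,\beta)\in\mathrm{Aut}(\PP^1)\times\mathrm{Aut}(\PP^1)$. This gives a bijection between $U$ and the irreducible rational $6$-fold $\mathrm{Aut}(\PP^1)\times\mathrm{Aut}(\PP^1)$. From $0\to T_C\to T_X|_C\to N_C\to 0$ with $T_X|_C\cong\Oo_{\PP^1}(2)^{\oplus 3}$ and $T_C\cong\Oo_{\PP^1}(2)$, the three components of the differential being nonzero constants, one gets $N_C\cong\Oo_{\PP^1}(2)^{\oplus 2}$; hence $h^1(N_C)=0$ and $h^0(N_C)=6$, so $\mathbf{H}$ is smooth of dimension $6$ along $U$. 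Once $U$ is shown to be dense, the dominant map $\mathrm{Aut}(\PP^1)\times\mathrm{Aut}(\PP^1)\dashrightarrow\mathbf{H}(1,1,1,1)$ yields unirationality.

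Next I would treat types (B) and (C1), which are nodal, hence local complete intersections, so $N_C$ is locally free; adjunction gives $\det N_C=\omega_C\otimes(\omega_X|_C)^{-1}$ of degree $-2+6=4$, whence $\chi(N_C)=\deg\det N_C+2\chi(\Oo_C)=6$. I would compute $h^1(N_C)=0$ from the Mayer--Vietoris sequence $0\to N_C\to\bigoplus_i N_C|_{C_i}\to\bigoplus_{\text{nodes}}N_C|_p\to 0$, using that at each node $N_C|_{C_i}$ is $N_{C_i/X}$ twisted up by one in the direction of the other branch; thus each $N_C|_{C_i}$ is a sum of line bundles of nonnegative degree on $\PP^1$ (for (B), $\Oo(2)\oplus\Oo(1)$ on the conic and $\Oo(1)\oplus\Oo$ on the line; for (C1), $\Oo(1)^{\oplus 2}$ on the middle line), all with vanishing $H^1$ and with sections surjecting onto the fibres at the nodes. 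Hence $h^1(N_C)=0$, so $h^0(N_C)=6$ and $\mathbf{H}$ is smooth of dimension $6$ at (B) and (C1) as well, i.e. smooth outside $\mathbf{D}$. Moreover $h^1(N_C)=0$ makes the node-smoothing deformations unobstructed, so each such $C$ deforms to a smooth cubic and $(\mathrm B),(\mathrm{C1})\subset\overline{U}$. Finally, sliding the two nodes of a chain together along its middle line degenerates (C1) to three concurrent lines, so $\mathbf{D}\subset\overline{(\mathrm{C1})}\subset\overline{U}$; therefore $\mathbf{H}(1,1,1,1)=\overline{U}$ is irreducible of dimension $6$ and unirational.

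The main obstacle is the normal-bundle bookkeeping at the nodes — identifying the correct twists and, above all, verifying that the evaluation maps $\bigoplus_i H^0(N_C|_{C_i})\to\bigoplus_p N_C|_p$ are surjective so that $h^1(N_C)=0$ — together with making the classification airtight, especially the ``no triangles'' observation, which simultaneously rules out lower Euler characteristic (hence embedded points) and isolates $\mathbf{D}$. Along $\mathbf{D}$ the three lines form a triple point that is \emph{not} a local complete intersection, so $N_C$ fails to be locally free and $h^0$ jumps; this is precisely why smoothness cannot be propagated across $\mathbf{D}$, matching the statement.
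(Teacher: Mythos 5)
Your proposal is correct and follows essentially the same route as the paper: the same classification of members via Lemma \ref{a1} and Euler characteristics, the same computation $N_C\cong\Oo_{\PP^1}(2)^{\oplus 2}$ on the smooth locus (there parametrized by homogeneity under $\mathrm{Aut}^0(X)$ rather than by graphs, which is equivalent), and the same Mayer--Vietoris/elementary-transformation argument giving $h^1(N_C)=0$ at the nodal curves. The only cosmetic difference is at $\mathbf{D}$, where you degenerate chains of three lines to a concurrent configuration while the paper deforms a concurrent configuration outward to a nodal chain; both yield $\mathbf{D}\subset\overline{\mathbf{H}(1,1,1,1)^{\mathrm{sm}}}$ and hence irreducibility.
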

\begin{proof}
Let us fix $[C]\in \mathbf{H}(1,1,1,1)$. By Lemma \ref{a1} every $1$-dimensional component of $C$ is generically reduced, i.e. the purely $1$-dimensional subscheme $E$ of $C_{\mathrm{red}}$ has tridegree $(1,1,1)$. We have $\chi (\Oo _C) \ge \chi (\Oo _D)$ for each connected component $D$ of $E$ and equality holds if and only if $D =C$. Since we have $\chi (\Oo _D) \ge 1$, we get $C = D$, and so $C$ is connected and reduced.

\quad (a) First assume that $C$ is irreducible. Since $\pi_{1|C}: C \rightarrow \PP^1$ has degree $1$, so $C$ is smooth and rational. In particular we get $[C]\in \mathbf{H}(1,1,1,1)^{\mathrm{sm}}$. Since $\pi_{i|C} :C \rightarrow \PP^1$ for $i=1,2,3$, is induced by the complete linear system $|\Oo _{\PP^1}(1)|$, so $\mathbf{H}(1,1,1,1)^{\mathrm{sm}}$ is homogeneous for the action of the group 
$$\mathrm{Aut}^0(X) =PGL(2)\times PGL(2)\times PGL(2).$$
Thus the algebraic set $\mathbf{H}(1,1,1,1)^{\mathrm{sm}}_{\mathrm{red}}$ is irreducible and unirational. To show that $\mathbf{H}(1,1,1,1)^{\mathrm{sm}}$ is smooth and of dimension $6$, it is sufficient to prove that $h^1(N_C)=0$ and $h^0(N_C) =6$. Note that we have $\chi (N_C) =6$. Since $X$ is homogeneous, $TX$ is globally generated and so is $TX_{|C}$. Since $N_C$ is a quotient of $TX_{|C}$, so $N_C$ is also globally generated. Since $C\cong \PP^1$, we get $h^1(N_C)=0$. Indeed we have $N_C \cong \Oo_{\PP^1}(2)\oplus \Oo_{\PP^1}(2)$; The normal bundle $N_C$ is a direct sum of two line bundles, say of degree $z_1\ge z_2$ with $z_1+z_2 =4$. Since $N_C$ is a quotient of $TX_{|C}$, which is the direct sum of three line bundles of degree $2$, we get $z_1=z_2 =2$.

\quad (b) By part (a) it is sufficient to prove that $\mathbf{H}(1,1,1,1)$ is smooth at each reducible curve $[C]\not \in \mathbf{D}$ and that each reducible element of $\mathbf{H}(1,1,1,1)$ is in the closure of $\mathbf{H}(1,1,1,1)^{\mathrm{sm}}$.

\quad ({b1}) Now assume that $C$ has two irreducible components, say $C = D_1\cup D_2$ with $D_1$ a line. Since $\chi (\Oo _C) = 1$ and the scheme $C$ is reduced with no isolated point and arithmetic genus $0$, so it is connected. Since $p_a({C})=0$, we get $\deg (D_1\cap D_2) =1$.
In particular $C$ is nodal and so $N_C$ is locally free. Without loss of generality we may assume that $D_1$ has tridegree $(1,0,0)$ and so $D_2$ is a smooth conic with tridegree $(0,1,1)$. We have $N_{D_1} \cong \Oo _{D_1}^{\oplus 2}$ and
$N_{D_2} \cong \Oo _{D_2}\oplus \Oo _{D_2}(0,1,1)$. Since $N_{C}$ is locally free, we have a Mayer-Vietoris exact sequence
\begin{equation}\label{eqa2}
0 \to N_C \to N_{C_{|D_1}} \oplus N_{C_{|D_2}} \to N_{C_{|D_1\cap D_2}}\to 0.
\end{equation}
Since $\deg (D_1\cap D_2)=1$ and $C$ is nodal, the sheaf $N_{C_{|D_1}}$ (resp. $N_{C_{|D_2}}$) is a vector bundle of rank $2$ obtained from $N_{D_1}$ (resp. $N_{D_2}$) by making one positive elementary transformation at $D_1\cap D_2$ (see \cite[\S 2]{HH}, \cite[Lemma 5.1]{S}, \cite{Sernesi}), i.e. $N_{D_i}$ is a subsheaf of $N_{C_{|D_i}}$ and its quotient $N_{C_{|D_i}}/N_{D_i}$ is
a skyscraper sheaf of degree $1$ supported on the point $D_1\cap D_2$.  Since $h^1(D_2,N_{D_2})=0$, we get $h^1(N_{C_{|D_2}})=0$. We also get $h^1(D_1,N_{C_{|D_1}}) =0$ and that $N_{C_{|D_1}}$ is spanned. Since $\deg (D_1\cap D_2)=1$ and $N_{C_{|D_1}}$ is spanned, so the restriction map $H^0(D_1,N_{C_{|D_1}})\rightarrow H^0(D_1\cap D_2,N_{C_{|D_1\cap D_2}})$ is surjective. Thus (\ref{eqa2}) gives $h^1(N_C)=0$ and so $\mathbf{H}(1,1,1,1)$ is smooth of dimension $6$ at $[C]$. Since the set of all such curves $C$ has dimension $5$, so $[C]$ is in the closure of $\mathbf{H}(1,1,1,1)^{\mathrm{sm}}$.

\quad ({b2}) Now assume that $C$ has at least three components, i.e. $C = D_1\cup D_2\cup D_3$ with each $D_i$ a line. First assume that $C$ is nodal. In this case one of the lines, say $D_2$, meets the other lines. As in step (a) we first get $h^1(N_{D_2\cup D_3}) =0$ and then $h^1(N_{C}) =0$. Thus $\mathbf{H}(1,1,1,1)$ is smooth of dimension $6$ at $[C]$. Since the set of all such $C$ has dimension $4$, we get that $[C]$ is in the closure of $\mathbf{H}(1,1,1,1)^{\mathrm{sm}}$. Now assume that $[C]\in \mathbf{D}$, say $C = A_1\cup A_2\cup A_3$ with a common point $p$. We can deform $A_1$ in a family of lines intersecting $A_2$ at a point different from $p$ and not intersecting $A_3$. Thus even in this case $[C]$ is in the closure of $\mathbf{H}(1,1,1,1)^{\mathrm{sm}}$.
\end{proof}

\begin{lemma}\label{2.1.1}
$\mathbf{H}(2,1,1,1)^{\mathrm{sm}}$ is irreducible, unirational and smooth with dimension $8$. Each $[C]\in \mathbf{H}(2,1,1,1)^{\mathrm{sm}}$ is a smooth and connected rational curve with tridegree $(2,1,1)$.
\end{lemma}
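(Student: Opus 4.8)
The plan is to follow the strategy of Proposition \ref{a2}(a): exhibit $\mathbf{H}(2,1,1,1)^{\mathrm{sm}}$ as a homogeneous space for $\mathrm{Aut}^0(X)=PGL(2)\times PGL(2)\times PGL(2)$, which yields irreducibility and unirationality at once, and then read off smoothness and the dimension from the normal bundle. First I would fix $[C]\in \mathbf{H}(2,1,1,1)^{\mathrm{sm}}$. By definition of the superscript $\mathrm{sm}$ the curve $C$ is smooth and connected, and its tridegree is $(2,1,1)$ since its Hilbert tripolynomial is $2x+y+z+1$. The projection $\pi_{2|C}:C\rightarrow \PP^1$ has degree $e_2=1$ (cf. the proof of Lemma \ref{a1} and Remark \ref{tr1}), so it is a degree-one morphism from a smooth connected curve to $\PP^1$, hence an isomorphism; thus $C\cong \PP^1$ is rational. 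This already gives the last assertion of the lemma.

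Next I would set up the parametrization. Writing $C$ as the image of an embedding $j=(\phi_1,\phi_2,\phi_3):\PP^1\hookrightarrow X$, with $\deg \phi_1=2$ and $\deg \phi_2=\deg \phi_3=1$ forced by the tridegree, the map $j$ is determined up to reparametrization of the source by $PGL(2)$. The key step is to prove that $\mathrm{Aut}^0(X)$ acts transitively on $\mathbf{H}(2,1,1,1)^{\mathrm{sm}}$. The pair $(\phi_2,\phi_3)$ embeds $\PP^1$ as a smooth curve of bidegree $(1,1)$ in $\PP^1\times \PP^1$, and all such curves are $PGL(2)\times PGL(2)$-equivalent, being graphs of isomorphisms; after acting on the second and third factors I may assume $\phi_2=\phi_3=\mathrm{id}$. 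The residual symmetries preserving this normalization are the diagonal $PGL(2)$ on the last two factors, which reparametrizes the source, together with $PGL(2)$ acting on the first factor. Under these, $\phi_1$ is a degree-two self-map of $\PP^1$ taken up to pre- and post-composition by $PGL(2)$, and every such map is equivalent to $s\mapsto s^2$: its two simple ramification points may be moved to $0,\infty$ and the target then normalized. Hence the action is transitive, so $\mathbf{H}(2,1,1,1)^{\mathrm{sm}}_{\mathrm{red}}$ is a single orbit of the connected rational group $\mathrm{Aut}^0(X)$; it is therefore irreducible and unirational.

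For smoothness and the dimension I would argue as in Proposition \ref{a2}(a). Since $X$ is homogeneous, $TX$ and hence $TX_{|C}$ is globally generated, and $N_C$ is a quotient of $TX_{|C}$, so $N_C$ is a globally generated bundle on $C\cong \PP^1$; consequently $h^1(N_C)=0$ and $\mathbf{H}(2,1,1,1)^{\mathrm{sm}}$ is smooth at $[C]$ of dimension $h^0(N_C)=\chi(N_C)$. From the tridegree one computes $TX_{|C}\cong \Oo_{\PP^1}(4)\oplus \Oo_{\PP^1}(2)\oplus \Oo_{\PP^1}(2)$, so that $\deg N_C=\deg(TX_{|C})-\deg T_{\PP^1}=8-2=6$ and $\chi(N_C)=6+2=8$. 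This gives dimension $8$.

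The main obstacle is the transitivity claim, and within it the bookkeeping of which symmetries survive after normalizing $\phi_2,\phi_3$: one must verify that the residual group is exactly what is needed to absorb the modulus of the degree-two map $\phi_1$, equivalently that a degree-two self-map of $\PP^1$ forms a single orbit under pre- and post-composition by $PGL(2)$. An alternative, matching the remark in the introduction, is to realize $\mathbf{H}(2,1,1,1)^{\mathrm{sm}}$ through the open locus of the space of stable maps $\overline{M}_{0,0}(X,(2,1,1))$ consisting of embeddings with smooth image; irreducibility of that locus would then transfer to $\mathbf{H}(2,1,1,1)^{\mathrm{sm}}$, with smoothness and dimension still supplied by the normal bundle computation above.
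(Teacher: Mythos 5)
Your proof is correct, but for the irreducibility/unirationality part it takes a different route from the paper. You transport the homogeneity argument of Proposition \ref{a2}(a) to the tridegree $(2,1,1)$ case, reducing everything to the claim that $\mathrm{Aut}^0(X)=PGL(2)^{\times 3}$ acts transitively on $\mathbf{H}(2,1,1,1)^{\mathrm{sm}}$; after normalizing $\phi_2=\phi_3=\mathrm{id}$ this becomes the statement that a degree-two self-map of $\PP^1$ is a single orbit under pre- and post-composition by $PGL(2)$, which is true (move the two simple ramification points to $0,\infty$, then the two distinct branch points to $0,\infty$, then rescale to get $s\mapsto s^2$), and you correctly flag this as the point needing verification. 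The paper instead sidesteps transitivity entirely: by Remark \ref{tr1} it parametrizes these curves by an open subset $V_0$ of the rational affine space $V=H^0(\Oo_{\PP^1}(2))^{\oplus 2}\times H^0(\Oo_{\PP^1}(1))^{\oplus 2}\times H^0(\Oo_{\PP^1}(1))^{\oplus 2}$ of triples of maps, and uses the universal property of the Hilbert scheme to get a dominant map from the irreducible rational $V_0$. The paper's parametrization is more robust (no orbit bookkeeping, and it generalizes to other tridegrees where the group does not act transitively), while your orbit argument gives the stronger statement that $\mathbf{H}(2,1,1,1)^{\mathrm{sm}}$ is a single homogeneous space, from which the dimension $9-1=8$ can even be read off group-theoretically. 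The smoothness and dimension computation is essentially identical in both: you compute $\deg N_C=\deg TX_{|C}-2=6$ directly from $TX_{|C}\cong\Oo_{\PP^1}(4)\oplus\Oo_{\PP^1}(2)\oplus\Oo_{\PP^1}(2)$, whereas the paper gets $\det N_C\cong\Oo_{\PP^1}(6)$ by adjunction; both then use global generation of the quotient $N_C$ of $TX_{|C}$ to conclude $h^1(N_C)=0$ and $h^0(N_C)=8$.
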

\begin{proof}
Since no plane cubic curve is contained in $X$ and the intersection of two quadric surfaces in $\PP^3$ has $4t$ as its Hilbert polynomial, $[C]\in \mathbf{H}(2,1,1,1)^{\mathrm{sm}}$ is a quartic rational curve. By Remark \ref{tr1}, a general element in 
$$V=H^0(\Oo_{\PP^1}(2))^{\oplus 2} \times H^0(\Oo_{\PP^1}(1))^{\oplus 2} \times H^0(\Oo_{\PP^1}(1))^{\oplus 2}$$
gives an isomorphism $\alpha : \PP^1 \rightarrow X$ onto its image and so there is an open subset $V_0\subset V$ with the universal family $\mathcal{V}_0\subset V_0\times X$. Since $\mathcal{V}_0$ is flat, so it gives a surjection $Y_0 \rightarrow \mathbf{H}(2,1,1,1)^{\mathrm{sm}}$ by the universal property of the Hilbert scheme. Since $V_0$ is rational, so $\mathbf{H}(2,1,1,1)^{\mathrm{sm}}$ is unirational. Now fix a curve $[C]\in \mathbf{H}(2,1,1,1)^{\mathrm{sm}}$. Since $C$ is a twisted cubic curve, so by adjunction we have 
$$\Oo_{\PP^1}(-2)\cong \omega_C \cong \det (N_{C}) \otimes \Oo_X(-2,-2,-2)$$
and so $\det (N_{C})\cong \Oo_{\PP^1}(6)$. It implies that $N_{C}\cong \Oo_{\PP^1}(a)\oplus \Oo_{\PP^1}(b)$ with $a+b=6$. Now from the surjection $TX_{\vert_C} \rightarrow N_{C}$, we get that $N_{C}$ is globally generated and so $a,b\ge 0$. In particular we have $h^0(N_{C})=8$ and $h^1(N_{C})=0$. Indeed we have $N_C \cong \Oo_{\PP^1}(4)\oplus \Oo_{\PP^1}(2)$. 
\end{proof}

Let us write $\mathbf{H}(2,1,1,1)_{+, \mathrm{red}}=\Gamma_1 \sqcup \Gamma_2$, where $\Gamma_1$ consists of the reduced curves and $\Gamma_2$ consists of the non-reduced curves.

\begin{lemma}\label{2.1.1.0}
$\Gamma_1$ is irreducible with $\mathbf{H}(2,1,1,1)^{\mathrm{sm}}$ as its open subset. In particular, each $[C]\in \Gamma _1$ is connected and irreducible components of $C$ are smooth rational curves. Furthermore we have $h^1(\Ii _C(t)) =0$ for all $t\in \ZZ$.
\end{lemma}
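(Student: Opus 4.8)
The plan is to realize $\Gamma_1$ as the image of the irreducible Kontsevich space of genus-$0$ stable maps to $X$, reading the component structure and connectedness off the arithmetic genus, and reserving a separate Mayer--Vietoris induction for the cohomological vanishing. First I would fix $[C]\in\Gamma_1$ and analyze it. As $C$ is reduced of tridegree $(2,1,1)$ it has degree $4$, and $\chi(\Oo_C)=1$ forces every connected component to have $\chi\le 1$. To see that $C$ is in fact connected I would rule out any connected subcurve of positive arithmetic genus inside the tridegree budget $(2,1,1)$: two distinct lines of $X$ meet in at most one point, the three lines of tridegrees $(1,0,0),(0,1,0),(0,0,1)$ through a common point are forced to be concurrent (a one-line coordinate check), and two conics summing to $(2,1,1)$ that meet in two points already have $p_a=1$ while exhausting the whole tridegree. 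Hence no reduced curve of tridegree $(2,1,1)$ with $\chi=1$ is disconnected, every connected component has $\chi=1$, and $p_a(C)=0$. From the normalization sequence $0\to\Oo_C\to\nu_*\Oo_{\wt C}\to\Sigma\to 0$ one gets $\sum_i g_i+\sum_p\delta_p=r-1$ for the $r$ irreducible components; since connecting $r$ components already forces the intersection points alone to contribute at least $r-1$ to $\sum_p\delta_p$, every component must be smooth ($\delta=0$) and rational ($g_i=0$). This yields the ``in particular'' clauses.

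For irreducibility I would use stable maps. Since $TX\cong\Oo_X(2,0,0)\oplus\Oo_X(0,2,0)\oplus\Oo_X(0,0,2)$ is globally generated, $X$ is convex, so for the homogeneous space $X$ (under $PGL(2)\times PGL(2)\times PGL(2)$) the moduli space $\overline{M}_{0,0}(X,(2,1,1))$ of genus-$0$ stable maps is irreducible. On the dense open locus of maps birational onto their image the image-cycle construction $[f]\mapsto f(\text{domain})$, taken with reduced structure, produces a flat family and hence a morphism into $\mathbf{H}(2,1,1,1)$ whose image is contained in $\Gamma_1$. Conversely each $[C]\in\Gamma_1$ is the image of the stable map normalizing $C$ (the tree of $\PP^1$'s found above), so this morphism surjects onto $\Gamma_1$; as the source is irreducible, $\Gamma_1$ is irreducible. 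The smooth connected curves are exactly the locus where $f$ is a closed embedding; this is open in $\Gamma_1$ and non-empty by Lemma \ref{2.1.1}, hence dense, which is the asserted statement that $\mathbf{H}(2,1,1,1)^{\mathrm{sm}}$ is an open (dense) subset.

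For the vanishing $h^1(\Ii_C(t))=0$ I would twist $0\to\Ii_C\to\Oo_X\to\Oo_C\to 0$ by $\Oo_X(t,t,t)$. By the K\"unneth formula $h^1(\Oo_X(t,t,t))=0$ for every $t$, so the vanishing is equivalent to surjectivity of $H^0(\Oo_X(t,t,t))\to H^0(\Oo_C(t,t,t))$. Both sides vanish for $t<0$ and the map of constants is onto for $t=0$, so only $t\ge 1$ remains. Here I would induct on the number of components: writing $C=C'\cup D$ with $D$ a leaf of the tree meeting $C'$ in the single reduced point $p$, the Mayer--Vietoris sequence $0\to\Oo_C\to\Oo_{C'}\oplus\Oo_D\to\Oo_p\to 0$ twisted by $\Oo_X(t,t,t)$ and the genus-$0$ vanishing $h^1(\Oo_C(t,t,t))=0$ reduce the problem to surjectivity for $C'$ together with the auxiliary surjectivity $H^0(\Oo_X(t,t,t)\otimes\Ii_{C'})\to H^0(\Oo_D(t,t,t)(-p))$ coming from the fact that $D$ meets $C'$ only at $p$. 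The base case is a single smooth rational curve of degree $\le 4$, where surjectivity of $H^0(\Oo_X(t,t,t))\to H^0(\Oo_{\PP^1}(t\deg D))$ is a direct projective-normality computation.

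I expect the cohomological vanishing, rather than the irreducibility, to be the main obstacle: semicontinuity only bounds $h^1(\Ii_C(t))$ from below by its generic value $0$, so one must genuinely establish surjectivity of every restriction map, and the auxiliary surjectivity in the inductive step must be handled with care at the non-nodal ordinary multiple points (several components through one point with independent tangents) that occur in $\Gamma_1$. A secondary technical point is checking that the image-cycle construction on the birational-onto-image locus of $\overline{M}_{0,0}(X,(2,1,1))$ indeed varies in a flat family, so that it descends to a genuine morphism to the Hilbert scheme.
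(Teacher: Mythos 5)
Your structural analysis (connectedness, components smooth and rational) is sound and close in spirit to the paper's, and your route to irreducibility is genuinely different: you propose to pull irreducibility back from the Kontsevich space $\mathbf{M}(X,\beta)$ of genus-$0$ stable maps of class $(2,1,1)$, irreducible by Kim--Pandharipande, whereas the paper stays inside the Hilbert scheme, proving $h^1(N_C)=0$ for every $[C]\in \Gamma_1$ by a Mayer--Vietoris induction on $TX_{|E_i}$ and then exhibiting each non-nodal configuration as a limit of nodal, hence smoothable, curves. (The paper does use stable maps, but only in the next theorem, for $\Gamma_2$.) Your route is shorter if one grants the cycle-to-Hilbert morphism on the birational-onto-image locus --- your ``secondary technical point,'' which is manageable here because all images are reduced with the same Hilbert polynomial $4t+1$ over a reduced base --- but it yields less: the paper's computation also gives smoothness of $\mathbf{H}(2,1,1,1)_+$ along $\Gamma_1$.

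The genuine gap is exactly where you predicted it, in the vanishing $h^1(\Ii_C(t))=0$, and your induction does not close at $t=1$. The auxiliary surjectivity $H^0(\Ii_{C'}(1,1,1))\to H^0(\Oo_D(1)(-p))$ amounts to producing a hyperplane of $\PP^7$ containing $C'$ but not $D$, i.e.\ to $D\nsubseteq \langle C'\rangle$. Since $C'$ is connected of degree $3$ and arithmetic genus $0$, $h^0(\Oo_{C'}(1))=4$ and $\langle C'\rangle$ is at most a $\PP^3$; so if $D\subset\langle C'\rangle$ then $C$ itself spans only a $\PP^3$, and conversely if $C$ spans a $\PP^4$ every leaf decomposition works. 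Thus the entire inductive step (and also your base case, where ``projective normality of a rational quartic'' presupposes the quartic is linearly normal in $\PP^4$ rather than lying in a $\PP^3$) reduces to the non-degeneracy statement that $C$ spans a $\PP^4$ --- and nothing in your argument supplies this: a connected nodal quartic of arithmetic genus $0$ spanning only a $\PP^3$ exists abstractly and has $h^1(\Ii_C(1))\ge 1$. This is precisely what the second half of the paper's proof establishes: assuming $M:=\langle C\rangle$ has dimension $3$, the sequence $0\to \Ii_{C,M}(t-1)\to \Ii_{C,M}(t)\to \Ii_{C\cap H,H}(t)\to 0$ forces $h^0(M,\Ii_{C,M}(2))\le 2$, contradicting the fact that $X\cap M$ is cut out by quadrics of $M$, has components of dimension at most one, is not a complete intersection of two quadrics, and yet contains the degree-$4$ curve $C$. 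Some such argument, using that $X$ is cut out by quadrics and contains no plane, is the missing idea you would need to add.
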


\begin{proof}
For a fixed curve $[C]\in \Gamma _1$, let $T$ be any irreducible component of $C$. Since either $T$ is a fiber of $\pi _{12}$ or ${\pi _3}_{|T}$ has degree one, so $T$ is smooth and rational. Assume for the moment the existence of a connected curve $C'\subseteq C$ with $p_a(C') >0$. Since $p_a({C}) =0$, we have $\deg (C') \le 3$. Since $C'$ is reduced, we get that $C'$ is a plane cubic, contradicting the fact that $X$ contains no plane and it is cut out by quadrics in $\PP^7$. The non-existence of $C'$ implies that $C$ is connected. Let $C_1,\dots ,C_h$ be the irreducible components of $C$ with $h\ge 2$ in an ordering so that if $h\ge 3$, then $E_i:= C_1\cup \cdots \cup C_i$ is connected for all $2 \le i \le h-1$. Fix an integer $i$ with $1\le i \le h-1$. Since $E_i$ and $E_{i+1}$ are connected with arithmetic genus zero, we have $\deg (C_{i+1}\cap E_i) =1$. Since $TX \cong \Oo _X(2,0,0)\oplus \Oo _X(0,2,0)\oplus \Oo _X(0,0,2)$, so the Mayer-Vietoris exact sequence
$$0 \to TX_{|E_{i+1}} \to TX_{|E_i}\oplus TX_{|C_{i+1}} \to TX_{|E_i\cap C_{i+1}}\to 0$$and induction on $i$ give $h^1(TX_{|C}) =0$. Since the natural map $TX_{|C} \rightarrow N_C$ has cokernel supported on the finite set $\mathrm{Sing}({C})$, we have $h^1(N_C)=0$ and so $ \mathbf{H}(2,1,1,1)_+$ is smooth at $[C]$. If $C$ is nodal, by induction on $i$ we get that each $E_i$ is smoothable and in particular $C$ is smoothable in $X$, i.e. $[C]$ is contained in the closure of $\mathbf{H}(2,1,1,1)^{\mathrm{sm}}$ in $\mathbf{H}(2,1,1,1)$.

Now assume that $C$ is not nodal and then we get $3\le h\le 4$. If $h=3$ and we may find an ordering so that $\deg (C_1) =2$, $\deg (C_2)=\deg (C_3)=1$ and $C_1, C_2, C_3$ contain a common point, say $p$, and neither $C_2$ nor $C_3$ is the tangent line of $C_1$ at $p$. Since $p\in C_2\cap C_3$, the lines $C_2$ and $C_3$ have different tridegree and so we may fix $C_1\cup C_2$ and move $C_3$ in the family of all lines meeting $C_2$ and with the tridegree of $C_3$. Thus we may deform $C$ to a nodal curve and hence again we get that $[C]$ is contained in the closure of $\mathbf{H}(2,1,1,1)^{\mathrm{sm}}$ in $\mathbf{H}(2,1,1,1)$. If $h=4$, then each irreducible component of $C$ is a line. Since two of these components have the same tridegree, so $C$ has a unique triple point and we may use the argument above for the case $h=3$.

Now a Mayer-Vietoris exact sequence gives $h^0(\Oo _{E_i}(1)) = \deg (E_i)+1$ for all $i$ even in the non-nodal case. In particular $h^0(\Oo _C(1)) =5$. Let $M\subset \PP^7$ be the linear span of $C$. Since $h^0(\Oo _C(1)) =5$, we have $\dim (M) \le4$. Let $H\subset M$ be a general hyperplane. Assume for the moment $\dim (M) =4$. In this case $C$ is linearly normal in $M$. The scheme $H\cap C$ is the union of $4$ points. Since $C$ is connected and linearly normal in $M$, we have
$h^1(M,\Ii _C(t)) =0$ for all $t\le 1$. The case $t=1$ of the exact sequence
\begin{equation}\label{eq+t1}
0 \to \Ii _{C,M}(t-1) \to \Ii _{C,M}(t)\to \Ii _{C\cap H,H}(t)\to 0
\end{equation}
gives that $C\cap H$ is formed by $4$ points of $H$ spanning $H$. Hence $h^1(H,\Ii _{C\cap H,H}(t)) =0$ for all $t>0$. By induction on $t$, (\ref{eq+t1}) gives $h^1(M,\Ii _{C,M}(t)) =0$ for all $t\ge 2$. To conclude we only need to exclude that $\dim (M)<4$. We have $\dim (M)>2$, because $X$ is cut out by quadrics and contains no plane. Now assume $\dim (M) =3$. Since $X$ contains no plane and no quadric surface, $X\cap M$ is an algebraic set cut out by quadrics and with connected components of dimension at most $1$. $X\cap M$ is not the complete intersection of two quadrics of $M$, because $X\cap M$ contains the degree $4$ curve $C$ and $p_a({C})=0$. Since $h^0(M,\Oo _C) =1$, the case $t=1$ of (\ref{eq+t1}) gives that $C\cap H$ spans the plane $H$. Hence $h^1(H,\Ii _{C\cap H,H}(2)) =0$. Since $h^1(M,\Ii _{C,M}(1)) =1$ and $H\cap C$ is formed by $4$ points spanning the plane $H$, the case $t=2$ of (\ref{eq+t1}) gives $h^1(M,\Ii _{C,M}(2)) \le 1$ and hence $h^0(M,\Ii _{C,M}(2)) \le 2$, a contradiction.
\end{proof}




\begin{theorem}
$\mathbf{H}(2,1,1,1)_{+, \mathrm{red}}$ is irreducible. 
\end{theorem}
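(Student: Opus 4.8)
The plan is to exploit the decomposition $\mathbf{H}(2,1,1,1)_{+,\mathrm{red}} = \Gamma_1 \sqcup \Gamma_2$ recorded just above the statement, together with Lemma \ref{2.1.1.0}, which already gives that $\Gamma_1$ is irreducible: it is the closure of $\mathbf{H}(2,1,1,1)^{\mathrm{sm}}$, of dimension $8$ by Lemma \ref{2.1.1}. Hence the whole theorem reduces to the single containment $\Gamma_2 \subseteq \overline{\Gamma_1}$, i.e. that every non-reduced curve of tridegree $(2,1,1)$ with $\chi=1$ is a flat limit of reduced ones. Once this is established, $\mathbf{H}(2,1,1,1)_{+,\mathrm{red}} = \overline{\Gamma_1}$ is irreducible.

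First I would classify the curves in $\Gamma_2$. If $[C]\in\Gamma_2$, then by Lemma \ref{a1} the reduced curve $C_{\mathrm{red}}$ has tridegree $(1,1,1)$ (it cannot be $(2,1,1)$, otherwise $C$ is reduced), so the extra multiplicity sits entirely over a line $L$ of tridegree $(1,0,0)$. Thus $C = A\cup Y$, where $A$ is a ribbon on $L$ with $[A]\in\Dd_c$ by Theorem \ref{o1}, and $Y$ is a reduced curve of tridegree $(0,1,1)$ (a conic in a fibre $\{o\}\times\PP^1\times\PP^1$, or a pair of lines of tridegrees $(0,1,0)$ and $(0,0,1)$). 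Using $\chi(\Oo_C) = \chi(\Oo_A)+\chi(\Oo_Y)-\deg(A\cap Y) = 1$, the fact that $A\cap Y$ is supported on the finite set $L\cap Y$ and is contained in the length-$2$ fibre of $A$ over that locus, and the bound $\deg(A\cap L')\le 2$ of Lemma \ref{bp02}, one pins down the finitely many possibilities: $c$ is small, with $c=2$ in the generic (conic) case and $c\ge 3$ only in the tangential configurations.

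The degeneration itself is where I would bring in the moduli of stable maps. Since $X=\PP^1\times\PP^1\times\PP^1$ is homogeneous under $\mathrm{Aut}^0(X)=PGL(2)\times PGL(2)\times PGL(2)$, it is convex, so the Kontsevich space $\overline{M}_{0,0}(X,\beta)$ with $\beta$ of tridegree $(2,1,1)$ is irreducible of the expected dimension $\dim X-3+\int_\beta c_1(TX)=3-3+8=8$, and its general point is an embedding of $\PP^1$ onto a curve of $\mathbf{H}(2,1,1,1)^{\mathrm{sm}}$. Each $[C]=A\cup Y\in\Gamma_2$ corresponds to a boundary stable map $f_0\colon \PP^1_1\cup\PP^1_2\to X$ whose first component is the degree-$2$ cover of $L$ attached to the ribbon $A$ and whose second component maps isomorphically onto $Y$, glued over $L\cap Y$. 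The key step is to smooth $f_0$ inside the irreducible space $\overline{M}_{0,0}(X,\beta)$ to a family $f_t$ of embeddings and to take the flat limit of the images $f_t(\PP^1)$ in the Hilbert scheme: because the limiting map double-covers $L$, this flat limit is a scheme supported on $L\cup Y$, non-reduced along $L$, i.e. a ribbon (with its ribbon datum, hence its value of $\chi$, read off from the ramification of the cover and the first-order smoothing) union the tail $Y$. Matching this against the classification above, I would verify that every $[C]\in\Gamma_2$ is obtained, so that $[C]\in\overline{\Gamma_1}$.

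The main obstacle is precisely this comparison between the two compactifications: stable maps have reduced images, whereas the points of $\Gamma_2$ are non-reduced, so one must compute the flat limit of the family of image subschemes and check that it is exactly the intended ribbon $A$ (with the correct $\chi=c$ and the correct tangency to $Y$), and that varying over smoothings realizes all of $\Gamma_2$. For the ribbons with $c=2$ one can sidestep this by hand: degenerate $A\in\Dd_2$ to a pair of disjoint lines using the irreducibility of $\mathbf{H}(2,0,0,2)_+$ from Theorem \ref{o1}(3), dragging $Y$ along to produce a reduced connected curve of $\Gamma_1$. The genuine difficulty is the rigid case $c\ge 3$, where by Theorem \ref{o1}(2) the ribbon $A$ does not smooth on its own and the tail $Y$ must absorb the extra structure; this is exactly where the smoothing inside $\overline{M}_{0,0}(X,\beta)$ is indispensable.
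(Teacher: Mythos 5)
Your overall strategy coincides with the paper's: reduce to $\Gamma_2\subseteq\overline{\Gamma_1}$ via Lemma \ref{2.1.1.0}, classify the non-reduced curves as $A\cup E$ with $A$ a ribbon on a line $L$ of tridegree $(1,0,0)$ and $E$ of tridegree $(0,1,1)$, smooth the case where $E$ is a connected conic by hand (there $a=2$ forces $A$ to be smoothable by Theorem \ref{o1}, and one drags $E$ along the deformation), and invoke stable maps for the remaining configurations. However, the step you yourself flag as ``the main obstacle'' --- passing from the irreducibility of $\overline{M}_{0,0}(X,\beta)$ to the statement that the flat limits of images of smoothings realize \emph{every} point of $\Gamma_2$ --- is exactly the content of the proof and is not carried out in your proposal. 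The paper resolves it with the modification method of Chung--Kiem: it works in the locus $\Theta_{2b}$ of stable maps whose restriction to one component doubly covers $L$, chooses a smooth chart of $\mathbf{M}(X,\beta)$ via the graph construction $\mathbf{M}(\PP^1\times X,(1,\beta))/\Aut(\PP^1)$, computes the normal space $N_{\Theta_{2b}/\mathbf{M}(X,\beta),[f]}\cong\Ext^0(N_{C/X}^*,\Oo_L(-1))\cong\Ext^1(\Oo_C,\Oo_L(-1))$, and shows that modifying the direct image sheaf $f_*\Oo_{C'}$ along normal directions hits the non-split extensions $0\to\Oo_L(-1)\to\Ff\to\Oo_C\to 0$ bijectively. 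Without some such computation the assertion that ``varying over smoothings realizes all of $\Gamma_2$'' is unsupported: a priori the flat limits could sweep out only a proper subvariety of the boundary.

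A second, related inaccuracy: you propose to dispose of \emph{all} $c=2$ cases by degenerating $A$ to two disjoint lines. This works when $E$ is a connected conic (the paper's case (a)), but when $E=L_1\sqcup L_2$ is a pair of disjoint lines the sub-case $a=2$, $\deg(L_1\cap A)=2$, $\deg(L_2\cap A)=1$ also occurs, and the paper deliberately routes it through the stable-maps argument. The reason is that over a fixed configuration $(L,L_1,L_2)$ the curves of this type form a $\PP^1\cong\PP\Ext^1(\Oo_{L\cup L_1\cup L_2},\Oo_L(-1))$ of distinct scheme structures, so one must show that the limits of the reduced degenerations surject onto this whole $\PP^1$ of extension classes; producing one flat limit per degeneration does not suffice. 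So the proposal identifies the correct architecture but leaves the decisive comparison between the stable-map and Hilbert-scheme compactifications, and the surjectivity onto the extension space, as genuine gaps.
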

\begin{proof}
By Lemma \ref{2.1.1.0} it is enough to show that $\Gamma_2 \subset \overline{\Gamma_1}$. Fix $[C]\in \Gamma_2$, i.e. $C$ is not reduced. By Lemma \ref{a1} $C_{\mathrm{red}}$ has tridegree $(1,1,1)$ and the nilradical of $\Oo _C$ is supported by a line $L$ of tridegree $(1,0,0)$. There is a unique reduced curve $E\subset C$ with $E$ of tridegree $(0,1,1)$. $E$ is either a disjoint union of two lines or a reduced conic. Set $\Jj := \mathrm{Ann}_{\Oo _C}(\Ii _{E,C})$. The $\Oo _C$-sheaf $\Jj$ is the ideal sheaf of a degree $2$ structure supported by $L$, possibly with embedded components. Let $C'$ be the curve with $\Jj$ as its ideal sheaf and $A$ the maximal locally CM subcurve of $C'$, which is obtained by taking as its ideal sheaf in $C$ the intersection of the non-embedded components of a primary decomposition of $\Ii _{C'}$. The curve $A$ is a locally CM curve of degree $2$ with $L:=\PP^1$ as its support, i.e. $[A]\in \Dd _a$ for some $a\ge 2$. We have $\chi (\Oo _A) =a$.

\quad (a) Assume for the moment that $E$ is a reduced conic and let $\langle E\rangle$ be the plane spanned by $E$. Since $X$ contains no plane cubic, we have $\deg (L\cap \langle E\rangle )\le 1$. Considering a general hyperplane $H\subset \PP^7$ containing $\langle E\rangle$ with $L\nsubseteq H$, we get $\deg (A\cap \langle E\rangle )\le 2$ and so $\deg (A\cap E) \le 2$. Thus we have $\chi (\Oo _A) \ge a-1$ and so $a=2=\deg (A\cap E) $. It implies that $\deg (L\cap E) =1$. Set $Z:= A\cap E$. Since $E$ is connected, there is $o\in \PP^1$ such that $E\subset \{o\}\times \PP^1\times \PP^1$ and so $E = \{o\}\times E'$ with $E'\in |\Oo _{\PP^1\times \PP^1}(1,1)|$. Since $Z \subset E\subset \{o\}\times \PP^1\times \PP^1$, there is a $0$-dimensional subscheme $Z'\subset E$ of degree $2$ such that $Z = \{o\}\times Z'$. Since $\deg (A)=2$, $Z$ is the scheme-theoretic intersection of $A$ and $\{o\}\times \PP^1\times \PP^1$. By Theorem \ref{o1} $A$ is smoothable, i.e. there are an integral curve $\Delta$ with $o\in \Delta$ and a flat family $\{A_t\}_{t\in \Delta }$ with $A = A_o$ and $A_t$ a disjoint union of two lines for all $t\in \Delta \setminus \{o\}$. Set $Z_t:= A_t\cap \{o\}\times \PP^1\times \PP^1$. We have $Z_t =\{o\}\times Z'_t$ for a $0$-dimensional subscheme $Z'_t \subset \PP^1\times \PP^1$ of degree $2$ with $Z'_o =Z'$ and $Z'_t$ reduced for all $t\in \Delta \setminus \{o\}$. Fix a general $q\in E'$. Decreasing $\Delta$ if necessary, we may assume $q\notin Z'_t$ for any $t$ and so $|\Oo _{Z'_t\cup \{q\}}(1,1)|$ contains a unique curve, say $E'_t$. Since $q\in E'$, we have $E'_o = E'$. Set $E_t= \{o\}\times E'_t$. The algebraic family $\{A_t\cup E_t\}_{t\in \Delta}$ is a flat family. Since $[A_t\cup E_t]\in \Gamma _1$ for $t\ne o$, so we have $[A_o\cup E_o]\in \overline{\Gamma _1}$.

\quad (b) Now assume that $E$ is a disjoint union of two lines, say $L_1$ of tridegree $(0,1,0)$ and $L_2$ of tridegree $(0,0,1)$. Since $\deg (L_i\cap L)\le 1$ and $L_i$ is a smooth curve, we have $\deg (L_i\cap A)\le 2$. With no loss of generality we may assume $\deg (A\cap L_1)\ge \deg (A\cap L_2)$. We have $\chi (\Oo _C) = \chi (\Oo _A) + 2 -\deg (E\cap A) =a+2-\deg (E\cap A)$ and $a\ge 2$. Thus we get either 
\begin{itemize}
\item $a=2$ with $\deg (L_1\cap A)=2$ and $\deg (L_2\cap A)=1$, or 
\item $a=3$ with $\deg (L_1\cap A)=\deg (L_2\cap A) =2$. 
\end{itemize}

Let us show that the locus $\Gamma_{2b}$ of these types of curves, is contained in $\overline{\Gamma _1}$ (cf. \cite[Proposition 5.10]{CCM}). Note that the space $\Gamma_{2b}$ is a $\PP^1$ (or its open subset)-bundle over $(\PP^1\times \PP^1) \times ((\PP^1\times \PP^1)\setminus\mathrm{D})$, where $\mathrm{D}$ is the diagonal. Here the first $\PP^1\times \PP^1$ parameterizes the supporting line of the double lines $A$ and the second $(\PP^1\times \PP^1)\setminus\mathrm{D}$ parameterizes the ordered pairs $(L_1,L_2)$ of two lines. Also the fiber $\PP^1\cong\PP \Ext^1(\Oo_{C},\Oo_{L}(-1))$ parameterizes the non-split extensions:
\begin{equation}\label{ch1}
0\to \Oo_L(-1) \to \Ff \to \Oo_C \to 0,
\end{equation}
where $C=L\cup L_1\cup L_2$.

Consider the moduli space $\mathbf{M}(X, \beta)$ of stable maps $f:C'\rightarrow X$ of genus $0$ and $f_*[D]=\beta \in H_2(X)$ of tridegree $(2,1,1)$. Let $\Theta_{2b}$ be the locus  of stable maps 
$$f:C'=L'\cup L_1'\cup L_2'\to X$$ 
with $f(C')=C=L\cup L_1\cup L_2$ such that $\mathrm{deg}(f_{|L'})=(2,0,0)$, $\mathrm{deg}(f_{|L_1'})=(0,1,0)$ and $\mathrm{deg}(f|_{L_2'})=(0,0,1)$. Then one can easily see that $\Theta_{2b}$ is a $\PP^2$-bundle over $(\PP^1\times \PP^1) \times ((\PP^1\times \PP^1)\setminus\mathrm{D})$ where $\PP^2$ parameterizes the degree two stable maps on $L$. 
To apply the modification method as in \cite{CK11}, we need to choose a smooth chart of $\mathbf{M}(X, \beta)$ at $[f]$.
In fact, around $[f]$, from \cite[Theorem 0.1]{Parker}, the space of maps $\mathbf{M}(X, \beta)$ can be obtained as the $SL(2)$-quotient
$$\mathbf{M}(X, \beta) \cong \mathbf{M}(\PP^1\times X, (1,\beta))/\Aut (\PP^1)$$
of the moduli space $\mathbf{M}(\PP^1\times X, (1,\beta))$ of stable maps in $\PP^1\times X$ of bidegree $(1,\beta)$ where $\Aut (\PP^1)=SL(2)$ canonically acts on $\mathbf{M}(\PP^1\times X, (1,\beta))$ (cf. \cite[\S 3.1]{CHK}). Among the fiber over $[f]$ along the GIT-quotient map, if we choose a graph map $f'$ such that the restriction on $L'$ is of bidegree $(1,(2,0,0))$ which doubly covers $\PP^1\times L \subset \PP^1\times X$, then $f'$ has the trivial automorphism. Hence, around $[f]$, the space $\mathbf{M}(\PP^1\times X, (1,\beta))$ is a smooth chart which is compatible with the $SL(2)$-action. Thus the argument in \cite[Lemma 4.6]{CK11} about the construction of the Kodaira-Spencer map of the space of maps can be naturally applied in our situation.

Now let us compute the normal space of $\Theta_{2b}$ by the same method as did in the proof of \cite[Lemma 4.10]{CK11}. Consider the long exact sequence:
\begin{align*}
0 &\rightarrow \Ext^0(\Omega_C, \Oo_C)\rightarrow \Ext^0(\Omega_X,\Oo_C)\rightarrow \Ext^0(N_{C/X}^*,\Oo_C)\\
&\stackrel{\psi}{\rightarrow} \Ext^1(\Omega_C, \Oo_C)\rightarrow \Ext^1(\Omega_X,\Oo_C)=0
\end{align*}
The last term is zero by the convexity of $X$ and $ \Ext^0(N_{C/X}^*,\Oo_C)\cong H^0(N_{C/X})\cong \CC^6$ because of the smoothness of $\mathbf{H}(1,1,1,1)$. Since $C$ has two node points, so we get $\Ext^1(\Omega_C, \Oo_C)=\CC^2$. Therefore $\mathrm{ker}(\psi)$ in the above means the deformation of $C$, while keeping the two node points. That is, this is the deformation space of the base space of $\Theta_{2b}$. On the other hand, as a similar computation did in the proof of \cite[Lemma 4.10]{CK11}, we obtain the following commutative diagram
$$
\xymatrix{0\ar[r]&\mathrm{ker}(\psi)\ar[r]\ar[d]^{\zeta}&\Ext^0(N_{C/X}^*,\Oo_C)\ar[r]\ar[d]&\Ext^1(\Omega_C, \Oo_C)\ar[r]\ar[d]^{\cong}&0\\
0\ar[r]&\frac{T_{[f]}\mathbf{M}(X, \beta)}{T_{[f]}\mathbf{M}(C,\beta)}\ar[r]&H^0(f^*N_{C/X}^*,\Oo_{C'})\ar[r]&\Ext^2([f^*\Omega_C\rightarrow \Omega_D],\Oo_{C'})\ar[r]&0.}$$
Hence the normal space of $\Theta_{2b}$ is $\mathrm{coker}(\zeta)$, which is isomorphic to $\Ext^0(N_{C/X}^*,\Oo_{L}(-1))$ obtained from the exact sequence $0\rightarrow \Oo_C\rightarrow{f_*\Oo_{C'}} \rightarrow \Oo_L(-1)\rightarrow 0$. Moreover the Kodaira-Spencer map $T_{[f]}\mathbf{M}(\PP^1\times X, \beta)\rightarrow \Ext_X^1(f_*\Oo_{C'},f_*\Oo_{C'})$ (For the definition, see (4.11) in \cite{CK11}) descends to the normal space which is compatible with the map
$$N_{\Theta_{2b}/\mathbf{M}(X, \beta),[f]}=\Ext^0(N_{C/X}^*,\Oo_{L}(-1))\cong \Ext^0(I_C,\Oo_{L}(-1))\cong \Ext^1(\Oo_C,\Oo_{L}(-1)).
$$
This implies that if we do the modification of the direct image sheaf $f_*\Oo_{C'}$ along the normal direction, the modified sheaf must lie in $\Ext^1(\Oo_C,\Oo_{L}(-1))$ bijectively (cf. \cite[Lemma 4.6]{CK11}). Since $\mathbf{M}(X, \beta)$ is irreducible by \cite{KP01} and $\Gamma_1$ can be regarded as an open subset of $\mathbf{M}(X, \beta)$ due to Lemma \ref{2.1.1.0} and \cite[Theorem 2]{FP97}, so we get that $\Gamma_{2b}\subset \overline{\Gamma_1}$.
\end{proof}

\section{Moduli of pure sheaves of dimension one}

\begin{definition}
Let $\Ff$ be a pure sheaf of dimension $1$ on $X$ with $\chi_{\Ff}(x,y,z)=e_1x+e_2y+e_3z+\chi$. The {\it p-slope} of $\Ff$ is defined to be $p(\Ff)=\chi/(e_1+e_2+e_3)$. $\Ff$ is called {\it semistable (stable)} with respect to the ample line bundle $\Oo_X(1,1,1)$ if, for any proper subsheaf $\Ff '$, we have
$$p(\Ff ')=\frac{\chi'}{e_1'+e_2'+e_3'} \le (<) \frac{\chi}{e_1+e_2+e_3}=p(\Ff)$$
where $\chi_{\Ff '}(x,y,z)=e_1'x+e_2'y+e_3'z+\chi'$. 
\end{definition}

For every semistable $1$-dimensional sheaf $\Ff$ with $\chi_{\Ff}(x,y,z)=e_1x+e_2y+e_3z+\chi$, let us define $C_{\Ff }:=\Supp (\Ff)$ to be its scheme-theoretic support and then it corresponds to $(e_1, e_2, e_3)=e_1t_2t_3+e_2t_1t_3+e_3t_1t_2\in A^2(X)$. We often use slope stability (resp. slope semistability) instead of Gieseker stability (resp. semistability) with respect to $\Ll:=\Oo_X(1,1,1)$, just to simplify the notation; they give the same condition, because the support is $1$-dimensional and so the inequalities for Gieseker and slopes stabilities are the same.

\begin{definition}
Let $\mathbf{M}(e_1, e_2, e_3,\chi)$ be the moduli space of semistable sheaves on $X$ with linear Hilbert tripolynomial $\chi(x,y,z)=e_1x+e_2y+e_3z+\chi$. 
\end{definition}

\begin{remark}
If we let $\mathbf{M}_{X,\PP^7}(\mu, \chi)$ be the moduli space semistable sheaves in Gieseker sense on $\PP^7$ with linear Hilbert polynomial $\chi(t)=\mu t+\chi$, which are $\Oo_X$-sheaves, then we have a natural decomposition 
\begin{equation}
\mathbf{M}_{X,\PP^7}(\mu, \chi)=\bigsqcup_{\substack{0\le e_1, e_2, e_3\le \mu, \\ e_1+e_2+e_3=\mu}} \mathbf{M}(e_1, e_2, e_3,\chi).
\end{equation}
\end{remark}

\begin{remark}
Since $\chi_{\Ff(a,b,c)}(x,y,z)=\chi_{\Ff}(x,y,z)+(e_1a+e_2b+e_3c)$, so we have an isomorphism 
$$\mathbf{M}(e_1, e_2, e_3,\chi) \to \mathbf{M}(e_1, e_2, e_3,\chi+e_1a+e_2b+e_3c)$$
defined by $\Ff \mapsto \Ff(a,b,c)$. Thus we may assume that $0<\chi\le \gcd (e_1, e_2, e_3)$.
\end{remark}

For a positive rational number $\alpha \in \QQ_{>0}$, a pair $(s,\Ff)$ of a non-zero section $s:\Oo_X \rightarrow \Ff$ of a sheaf $\Ff$ is called $\alpha$-{\it semistable} if $\Ff$ is pure and for any non-zero proper subsheaf $\Ff' \subset \Ff$, we have
$$\frac{\chi(\Ff'(t))+\delta \cdot \alpha}{r(\Ff')} \leq \frac{\chi (\Ff (t))+\alpha}{r(\Ff)}$$
for $t\gg0$. Here $r(\Ff)$ is the leading coefficient of the Hilbert polynomial $\chi(\Ff(t))$ and we take $\delta=1$ if the section $s$ factors through $\Ff'$ and $\delta=0$ if not. As usual, if the inequality is strict, we call it $\alpha$-{\it stable}. By \cite[Theorem 4.2]{He} the wall happens at $\alpha$ with which the strictly $\alpha$-semistability occurs. As a routine, we will write $(1,\Ff)$ for the pair of a sheaf with $\Ff$ with a non-zero section and $(0,\Ff)$ for the pair of sheaf with zero section . 

Note that there are only finitely many critical values $\{\alpha_1, \ldots, \alpha_s \}$ for $\alpha$-stability with $\alpha_1<\cdots<\alpha_s$. Then any $\alpha\in (\alpha_i, \alpha_{i+1})$ gives the same moduli spaces of $\alpha$-stable pairs. Notice that if $\alpha<\alpha_1$, then $\alpha$-stability is equivalent to the Gieseker stability and so there exists a forgetful surjection to the moduli of stable sheaves. If $\alpha>\alpha_s$, then the cokernel of the pair $\Oo_X \rightarrow \Ff$ is supported at a $0$-dimensional subscheme and so we get the moduli of PT stable pairs. 

Let us denote the moduli of $\alpha$-stable pairs with $\alpha<\alpha_1$ by $\mathbf{M}^{0+}(e_1, e_2, e_3, \chi)$ and with $\alpha>\alpha_s$ by $\mathbf{M}^{\infty}(e_1, e_2, e_3, \chi)$. Then there exists a forgetful surjection $\mathbf{M}^{0+}(e_1, e_2, e_3, \chi)\rightarrow \mathbf{M}(e_1, e_2, e_3, \chi)$ and $\mathbf{M}^{\infty}(e_1, e_2, e_3, \chi)$ is the moduli space of PT stable pairs.

\begin{lemma}\label{lemm}
For each $[C]\in \mathbf{H}(1,1,1,1)$, its structure sheaf $\Oo_C$ is stable. 
\end{lemma}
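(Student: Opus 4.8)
The plan is to verify the defining inequality of stability directly. Since $\Oo_C$ has tridegree $(1,1,1)$ and $\chi(\Oo_C)=1$, its $p$-slope is $p(\Oo_C)=1/3$, and I must show that every proper nonzero subsheaf $\Ff'\subset\Oo_C$ satisfies $p(\Ff')<1/3$. I would first record, from Proposition \ref{a2}, that $C$ is connected and reduced with $p_a(C)=0$; combined with the fact (established in the proof of Proposition \ref{a2}) that the irreducible components of $C$ are smooth rational curves, this shows that $C$ is a tree (or, on the locus $\mathbf{D}$, a star) of $\PP^1$'s. In particular $C$ contains no cycle, so every connected reduced subcurve of $C$ again has arithmetic genus $0$.

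Next I would reduce an arbitrary subsheaf to an ideal-type sheaf. A nonzero subsheaf $\Ff'$ of the pure sheaf $\Oo_C$ is itself pure of dimension $1$, hence has no zero-dimensional torsion; let $D'\subseteq C$ be the reduced union of the components of $C$ carrying the support of $\Ff'$, and let $D$ be the complementary subcurve, so that $C=D\cup D'$ with $D$ and $D'$ sharing no component. The composite $\Ff'\hookrightarrow\Oo_C\to\Oo_D$ has image supported on the finite set $D\cap D'$; since $\Oo_D$ is pure (as $D$ is reduced of pure dimension $1$), this image vanishes, so $\Ff'\subseteq\ker(\Oo_C\to\Oo_D)=\Ii_{D,C}$. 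Because $\Ff'$ and $\Ii_{D,C}$ have the same $1$-dimensional support $D'$ and both have generic rank one along $D'$, the quotient $\Ii_{D,C}/\Ff'$ is $0$-dimensional, whence $\chi(\Ff')\le\chi(\Ii_{D,C})$ while $\deg(\Ff')=\deg(D')>0$.

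Then I would compute $\chi(\Ii_{D,C})$ from the exact sequence $0\to\Ii_{D,C}\to\Oo_C\to\Oo_D\to0$, which gives $\chi(\Ii_{D,C})=\chi(\Oo_C)-\chi(\Oo_D)=1-\chi(\Oo_D)$. If $D\ne\emptyset$, then $\chi(\Oo_D)$ equals the number of connected components of $D$, each contributing $\chi=1-p_a=1$ by the genus-zero statement above, so $\chi(\Oo_D)\ge1$ and therefore $\chi(\Ff')\le\chi(\Ii_{D,C})\le0$. If $D=\emptyset$, then $\Ii_{D,C}=\Oo_C$ and the properness of $\Ff'$ forces $\chi(\Ff')<\chi(\Oo_C)=1$, hence $\chi(\Ff')\le0$ as well. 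In either case $p(\Ff')=\chi(\Ff')/\deg(\Ff')\le0<1/3=p(\Oo_C)$, which is exactly the stability inequality.

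The only real content, and the step I expect to require the most care, is the claim that $\chi(\Oo_D)\ge1$ for every nonempty subcurve $D$, i.e.\ that $C$ possesses no connected subcurve of positive arithmetic genus. This is where the explicit classification underlying Proposition \ref{a2} is essential: it guarantees that $C$ is a tree of smooth rational curves rather than containing a cycle (a triangle of lines, which would have $p_a=1$, is excluded precisely because $\chi(\Oo_C)=1$), so the vanishing of the arithmetic genus propagates to all connected subcurves. Everything else reduces to the purity of subsheaves of $\Oo_C$ and the additivity of $\chi$ along $0\to\Ii_{D,C}\to\Oo_C\to\Oo_D\to0$.
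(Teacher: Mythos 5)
Your argument is correct, but it follows a genuinely different route from the paper's. You bound the Euler characteristic of an arbitrary nonzero proper subsheaf $\Ff'\subset \Oo_C$ directly: after observing that $\Ff'$ is pure and hence lands in the ideal sheaf $\Ii_{D,C}$ of the complementary subcurve $D$, you use $\chi(\Ii_{D,C})=1-\chi(\Oo_D)$ together with the fact that every nonempty subcurve of $C$ has $\chi\ge 1$ (because, by the classification in Proposition \ref{a2}, $C$ is a tree or star of smooth rational curves, so no connected subcurve has positive arithmetic genus) to conclude $\chi(\Ff')\le 0$ and hence $p(\Ff')\le 0<1/3$. The paper instead argues by contradiction through global sections: a destabilizing subsheaf $\Ff$ has $\chi(\Ff)>0$, hence $h^0(\Ff)>0$; a nonzero section induces an inclusion $\Oo_D\hookrightarrow \Ff\subseteq\Oo_C$, and the combination of $h^0(\Oo_C)=1$ (so the only global sections of $\Oo_C$ are constants, which generate all of $\Oo_C$) with $h^1(\Oo_D)=0$ rules this out. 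The paper's proof is shorter; yours is more explicit about where the geometry enters, proves the stronger statement that every proper subsheaf of $\Oo_C$ has non-positive Euler characteristic (so stability holds with a wide margin, independently of the chosen polarization), and correctly isolates the one point that needs care, namely that the genus-zero condition propagates to all connected subcurves. Both arguments ultimately rest on the same input from Proposition \ref{a2}: $C$ is connected and reduced with $p_a(C)=0$.
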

\begin{proof}
The assertion is clear if $C$ is irreducible. Assume that $C$ is not irreducible. Let $\Ff$ be an unstabilizing subsheaf of $\Oo_C$ and so we have $h^0(\Ff)>0$. A non-trivial section of $H^0(\Ff)$ induces an injection $\Oo_D \rightarrow \Ff$ with the cokernel $\Gg$, where $D$ is a subcurve of $C_{\Ff}$ and so of $C$. In particular we have $\deg (D)=1$ or $2$. Now we get a non-trivial map $\Oo_{C_{\Gg}} \rightarrow \Gg$ from the natural surjection $\Oo_C \rightarrow \Oo_{C_{\Gg}}$. It is impossible, because $h^1(\Oo_D)=0$ and $h^0(\Oo_C)=1$.
\end{proof}

\begin{proposition}\label{a3}
We have $$\mathbf{M}^{0+}(1,1,1,1)\cong \mathbf{M}^\alpha(1,1,1,1) \cong \mathbf{M}^{\infty}(1,1,1,1)$$
for all $\alpha>0$. 
\end{proposition}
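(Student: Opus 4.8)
The plan is to prove that there is \emph{no wall} for $\alpha\in(0,\infty)$, so that the moduli functor of $\alpha$-stable pairs is constant along the whole ray. By \cite[Theorem 4.2]{He} a wall can occur only at a value of $\alpha$ admitting a strictly $\alpha$-semistable pair, so it suffices to show that no pair $(s,\Ff)$ with $\chi_{\Ff}(x,y,z)=x+y+z+1$ is strictly $\alpha$-semistable for any $\alpha>0$. Once this is established, the interval $(0,\infty)$ is a single chamber, and the three spaces $\mathbf{M}^{0+}(1,1,1,1)$, $\mathbf{M}^{\alpha}(1,1,1,1)$ and $\mathbf{M}^{\infty}(1,1,1,1)$ are literally one and the same.

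Normalizing $\chi=1$ (allowed since $\gcd(1,1,1)=1$), any such $\Ff$ is pure of tridegree $(1,1,1)$, hence $r(\Ff)=3$ and the reduced $\alpha$-slope is $(1+\alpha)/3$. Suppose a proper nonzero subsheaf $\Ff'\subset\Ff$ of multiplicity $r':=e_1'+e_2'+e_3'$ and $\chi(\Ff')=\chi'$ gives equality at some $\alpha_0>0$. A proper subsheaf of a pure sheaf with the same Hilbert polynomial would equal $\Ff$, so $r'\in\{1,2\}$. Rewriting the strict-semistability equation $3(\chi'+\delta\alpha_0)=r'(1+\alpha_0)$ as $3\chi'-r'=\alpha_0(r'-3\delta)$ and using $\alpha_0>0$ yields the sign constraints: if $\delta=0$ (the section does not factor through $\Ff'$) then $3\chi'>r'$, forcing $\chi'\ge1$; if $\delta=1$ (the section factors through $\Ff'$) then $3\chi'<r'\le2$, forcing $\chi'\le0$.

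The geometric input that contradicts both cases is that every locally CM curve of tridegree $\le(1,1,1)$ in $X$ has $\chi\ge1$ on each connected component: lines and smooth conics have $\chi=1$, double lines have $\chi\ge2$ by Theorem \ref{o1}, and there is no connected reduced tridegree $(1,1,1)$ curve of arithmetic genus one, because $X$ contains no plane and three coordinate lines of distinct tridegrees that meet pairwise must in fact share a common point (the locus $\mathbf{D}$) rather than forming a triangle. When $\delta=1$ the image $\Oo_{C'}$ of the section lies inside the pure sheaf $\Ff'$, so $\chi'\ge\chi(\Oo_{C'})\ge1$, contradicting $\chi'\le0$; when $\delta=0$ the induced section of $\Ff''=\Ff/\Ff'$ is nonzero, and the same bound applied to its image gives $\chi''=1-\chi'\ge1$, i.e. $\chi'\le0$, contradicting $\chi'\ge1$. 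I expect the main obstacle to lie precisely in these Euler-characteristic comparisons when $\Ff$ is not of the form $\Oo_C$ (e.g. PT pairs carrying extra length at points, or a quotient $\Ff''$ whose support is strictly larger than the image of its section), since there the relevant cokernel need not have nonnegative $\chi$ a priori; one resolves this using purity of $\Ff$ and the fact that the cokernel is supported exactly where the generic rank drops, reducing once more to the $\chi\ge1$ estimate, if necessary by inducting on the number of irreducible components. Finally, the same bound identifies all three spaces with $\mathbf{H}(1,1,1,1)$: Lemma \ref{lemm} makes $\Oo_C$ stable, $h^0(\Oo_C)=1$ pins the section down uniquely, and the bound forces any extra length to vanish, so that $\Ff\cong\Oo_C$ with $[C]\in\mathbf{H}(1,1,1,1)$ in every regime.
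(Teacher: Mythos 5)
Your reduction to ``no walls'' and the sign analysis $3\chi'-r'=\alpha_0(r'-3\delta)$ agree with the paper's strategy, and the conclusions $\chi'\ge 1$ when $\delta=0$ and $\chi'\le 0$ when $\delta=1$ are correct. The gap is in the step you yourself flag as ``the main obstacle'': the inequality $\chi(\Ff')\ge\chi(\Oo_{C'})$ for the image $\Oo_{C'}$ of the section (and likewise $\chi(\Ff'')\ge\chi(\Oo_{C''})$ for the quotient) is simply false for pure $1$-dimensional sheaves, and your proposed fix does not repair it. The cokernel of $\Oo_{C'}\hookrightarrow\Ff'$ is \emph{not} supported where the generic rank drops when $C'$ is a proper subcurve of the support of $\Ff'$: it is then genuinely $1$-dimensional and can have arbitrarily negative Euler characteristic. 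A concrete instance, which is exactly the configuration the paper must eliminate, is $\Gg\cong\Oo_{B_1}(k)\oplus\Oo_{B_2}(-k-c)$ on two skew lines with $k\ge 0$ and $c\ge 1$: it is pure, has a nonzero section with image $\Oo_{B_1}$ of Euler characteristic $1$, yet $\chi(\Gg)\le 0$. A similar failure occurs for non-locally-free pure sheaves on a nodal conic. So your Euler-characteristic comparison cannot close either the $\delta=0$ or the $\delta=1$ branch, and ``inducting on the number of irreducible components'' does not help because the offending summand $\Oo_{B_2}(-k-c)$ carries no section to induct on.

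What actually kills these configurations in the paper is not an Euler-characteristic bound but the $\alpha$-(semi)stability of the sub- and quotient \emph{pairs}: one first chooses the destabilizing subsheaf with support of minimal degree so that $(s,\Ff')$ is $\alpha$-stable and the quotient pair $(1,\Gg)$ is $\alpha$-semistable, and then exhibits an internal destabilizer, e.g.\ $(1,\Oo_{B_1}(k))\subset(1,\Gg)$ has strictly larger $\alpha$-slope. Carrying this out requires the case analysis your write-up omits: support a line, a smooth conic, a nodal conic (with the locally free and non-locally-free subcases, the latter via the kernel $\Nn$ of restriction to one branch), or two skew lines. Your ``geometric input'' about curves of tridegree at most $(1,1,1)$ having $\chi\ge 1$ per connected component is fine as far as it goes (and the observation that three pairwise-meeting lines of distinct tridegrees are concurrent is correct), but it only controls structure sheaves of curves, not the sheaves $\Ff'$ and $\Gg$ that actually arise, so the proof as written does not go through.
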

\begin{proof}
It is enough to show that there is no wall-crossing among moduli spaces. By \cite[Theorem 4.2]{He} the wall occurs at $\alpha$ with which the strict $\alpha$-semistability occurs. Assume that $(s,\Ff')$ is a subsheaf of $(1,\Ff)$ which induces the strict $\alpha$-semistability and whose support has minimal degree. Hence we may assume that $(s,\Ff ')$ is $\alpha$-stable. Since $(1,\Ff )$ is $\alpha$-semistable, the quotient $ (s',\Gg ):= (1,\Ff )/(s,\Ff ')$ is $\alpha$-semistable.

\quad {(a)} If the Hilbert polynomial of $\Ff'$ is $t+c$ for a constant $c$, then we have $\Ff' \cong \Oo_L(c-1)$ for a line $L$ with $c+\delta \cdot \alpha=\frac{1+\alpha}{3}$. If $\delta=1$, then we have $3c+2\alpha=1$. But since $\Ff'$ has a non-zero section, we must have $c\ge 1$, a contradiction. If $\delta=0$, then we have $\alpha=3c-1>0$ and so $c\ge 1$. Its cokernel is $(1,\Gg)$ where $\Gg$ is a sheaf with the Hilbert polynomial $2t+1-c$. Note that $\chi(\Gg)=1-c\le 0$ and $\Gg$ has a non-zero section. If the schematic support $C_{\Gg}$ of $\Gg$ is a smooth conic, then we have $\Gg \cong \Oo_{C_{\Gg}}(-c)$ and so it cannot have a non-zero section. 

\quad (a1) Now assume that $C_{\Gg}$ is a singular conic, say $A_1 \cup A_2$ with $\{o\}:= A_1\cap A_2$. First assume that $\Gg$ is locally free and set $a_i:= \deg (\Gg_{|A_i})$ with $a_1\ge a_2$. Since $\Gg$ is locally free, we have $\deg (\Gg) =a_1+a_2$ and so $a_1+a_2 =-c \le -1$. Since $h^0(\Gg)>0$, so we have $a_1\ge 1$ and $(1, \Oo_{A_1}(a_1))$ gives a contradiction. Now assume that $\Gg$ is not locally free. Since it has pure rank $1$, so its torsion $\tau$ is supported at $\{o\}$ with $\deg (\tau ) =1$. The integers $b_i :=\deg (\Gg _{|A_i}/\mathrm{Tors}(\Gg _{|A_i}))$ satisfy $b_1+b_2-1 =-c\le -1$ and so $b_1+b_2\le 0$. Without loss of generality we may assume $b_1\ge b_2$. Since $h^0(\Gg )>0$ and $\Gg$ is not locally free, we have $b_1>0$ (the case $(b_1,b_2)=(0,0)$ has no section). Also note that if the section of $\Gg$ is zero at a general point of $A_1$, then we would have $b_2>0$, which is not possible. Let $\Nn$ be the kernel of the restriction map $\Gg \to \Gg _{|A_2}$. Since $b_2 <0$, the non-zero section of $\Gg$ induces a non-zero section of $\Nn$. Note that $\Nn$ is an $\Oo _{A_1}$-sheaf. Since $\Gg$ has depth $1$, so is $\Nn$. Thus $\Nn$ is a rank $1$ locally free $\Oo _{A_1}$-sheaf. Since $\deg (\tau )=1$, we have $\Nn \cong \Oo _{A_1}(b_1)$. Therefore $(1,\Gg)$ has a subsheaf $(1,\Oo _{A_1}(b_1))$ and it contradicts the semistability of $(1,\Gg )$.

\quad (a2) So the only possibility of $\Gg$ is $\Gg \cong \Oo_{B_1}(k)\oplus \Oo_{B_2}(-k-c)$ with $k\ge 0$ and two skew lines $B_1,B_2$. But then $\Gg$ has $(1,\Oo_{B_1}(k))$ as a subsheaf, contradicting the semistability with slope $t+c$ of $(1,\Gg )$, because $t+k+3c> t+c$.

\quad{(b)} Now we assume that the Hilbert polynomial of $\Ff'$ is $2t+c$. By assumption $(s,\Ff ')$ is $\alpha$-stable. We have $\frac{c+\delta \cdot \alpha}{2}=\frac{1+\alpha}{3}$. If $\delta=0$, then we have $3c=2+2\alpha\ge 3$ and so $c\ge 1$. It implies that the cokernel pair is $(1,\Gg)$ with $t+1-c$ as its Hilbert polynomial. Since $\Gg \cong \Oo_L(-c)$ with $L$ a line, it cannot have a non-zero section, a contradiction. Assume that $\delta =1$ and then we have $3c+\alpha=2$. Thus we have $c=0$ and $\alpha=2$. Lemma \ref{a1} gives that $C:= C_{\Ff '}$ is reduced. With no loss of generality we may assume that $C$ has tridegree $(1,1,0)$. Since $\deg ({C})$ is the leading term of the Hilbert polynomial of $\Ff'$, we get that $\Ff'$ has rank $1$ at a general point of each irreducible component of $C$. First assume that $C$ is smooth. Since $\chi(\Ff')=0$ and $\Ff'$ has a non-zero section, as in (a) the only possibility is $\Ff'\cong \Oo_{L_1}(k)\oplus \Oo_{L_2}(-k-2)$ with $k\ge 0$ and $L_1,L_2$ two skew lines. Then $(1,\Oo_{L_1}(k))$ contradicts the semistability of $(1,\Ff)$. Now assume that $C$ is a reduced conic, say $C=A_1\cup A_2$ with $A_1$ of tridegree $(1,0,0)$.  Set $b_i :=\deg (\Ff' _{|A_i}/\mathrm{Tors}(\Ff' _{|A_i}))$, $i=1,2$. Notice that we have an inclusion $\Oo _{A_i}(b_i-1)\hookrightarrow \Ff'$. With no loss of generality we may assume $b_1\ge b_2$. First assume that $\Ff$ is locally free. In this case $\Ff '_{|A_i}$ has no torsion and $b_1+b_2 =c-1 =-1$. We get $b_2\le -1$. Since $h^0(\Ff ')>0$ we get $b_1>0$ and that $s$ induces a non-zero section $s'$ of $\Oo _{A_1}(b_1-1)$. The $2$-slope of $(s',\Oo _{A_1}(b_1-1))$ is at least $2$, contradicting the $2$-stability of $(s,\Ff ')$. Now assume that $\Ff '$ is not locally free. Since $\Ff '$ has pure rank $1$, we have $b_1+b_2=0$. Since $h^0(\Ff ') >0$ and $\Ff'$ is not locally free, we have $(b_1,b_2) \ne (0,0)$. Hence $b_2<0$ and $b_1>0$. We conclude as in the locally free case.
\end{proof}

\begin{corollary}
We have $$\mathbf{H}(1,1,1,1) \cong \mathbf{M}^{\alpha}(1,1,1,1) \cong \mathbf{M}(1,1,1,1)$$
for all $\alpha>0$. In particular they are all irreducible, smooth and unirational varieties of dimension $6$. 
\end{corollary}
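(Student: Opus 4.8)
The plan is to realize $\mathbf{H}(1,1,1,1)$ as one of the stable-pair chambers, transport it across the (nonexistent) wall of Proposition \ref{a3} to the moduli of sheaves, and finally upgrade the geometric conclusions of Proposition \ref{a2}. By Lemma \ref{lemm} the assignment $[C]\mapsto[\Oo_C]$ is a well-defined morphism $\mathbf{H}(1,1,1,1)\to\mathbf{M}(1,1,1,1)$, and I would exhibit it as the composite of three isomorphisms
$$\mathbf{H}(1,1,1,1)\cong\mathbf{M}^{\infty}(1,1,1,1)\cong\mathbf{M}^{0+}(1,1,1,1)\cong\mathbf{M}(1,1,1,1),$$
the middle one being Proposition \ref{a3} (so that every $\mathbf{M}^{\alpha}$ is identified as well).

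For the two outer identifications I would argue as follows. A PT stable pair $(s,\Ff)$ with this Hilbert tripolynomial has $\Supp(\Ff)$ of class $(1,1,1)$, hence reduced by Lemma \ref{a1}; the image of $s$ is the structure sheaf of a purely one-dimensional curve $C'\subset\Supp(\Ff)$ of class $(1,1,1)$, and since three lines of the admissible tridegrees meeting pairwise are necessarily concurrent (no genuine triangle occurs), every such curve is connected of arithmetic genus $0$. Thus $\chi(\Oo_{C'})=1=\chi(\Ff)$, the zero-dimensional cokernel of $s$ vanishes, $\Ff\cong\Oo_{C'}$, and $\mathbf{M}^{\infty}(1,1,1,1)\cong\mathbf{H}(1,1,1,1)$. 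In the chamber $\alpha\to0^{+}$, on the other hand, $\alpha$-stability is ordinary Gieseker stability together with a choice of section, so the forgetful map $\mathbf{M}^{0+}\to\mathbf{M}$ has fibre $\PP H^{0}(\Ff)$; the previous step shows every stable $\Ff$ is some $\Oo_C$ with $C$ connected, whence $h^{0}(\Oo_C)=1$ and the map is an isomorphism. Combined with Proposition \ref{a3} this yields the full chain, and $[C]\mapsto[\Oo_C]$ is an isomorphism.

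Irreducibility, unirationality and $\dim=6$ are then inherited verbatim from Proposition \ref{a2}, which also gives smoothness on the complement of $\mathbf{D}$. \emph{The one point genuinely left open by Proposition \ref{a2}, and the main obstacle here, is smoothness along $\mathbf{D}$.} I would settle it by a tangent-space computation. For $[C]\in\mathbf{D}$ one has $p_a(C)=0$, so $H^{1}(\Oo_C)=0$, and applying $\Hom(-,\Oo_C)$ to $0\to\Ii_C\to\Oo_X\to\Oo_C\to0$ gives
$$T_{[C]}\mathbf{H}(1,1,1,1)=\Hom(\Ii_C,\Oo_C)\cong\Ext^{1}(\Oo_C,\Oo_C).$$
Writing $C=A_1\cup A_2\cup A_3$ for the three coordinate lines through $p$, the ideal $\Ii_C$ is generated by $\bar{y}\bar{z},\ \bar{z}\bar{x},\ \bar{x}\bar{y}$ with the Koszul-type syzygies $\bar{x}(\bar{y}\bar{z})=\bar{y}(\bar{z}\bar{x})=\bar{z}(\bar{x}\bar{y})$, so a homomorphism is a triple $(h_1,h_2,h_3)$ in the three three-dimensional spaces $H^{0}(\Oo_C(0,1,1)),\,H^{0}(\Oo_C(1,0,1)),\,H^{0}(\Oo_C(1,1,0))$ satisfying $\bar{x}h_1=\bar{y}h_2=\bar{z}h_3$ in $H^{0}(\Oo_C(1,1,1))$.

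The decisive observation is that $\bar{x}$ vanishes identically on $A_2\cup A_3$, and cyclically for $\bar{y},\bar{z}$, so the three products are supported on distinct components; the syzygy conditions therefore decouple into the single requirement that each $h_i$ vanish at $p$, one linear condition on each summand, giving $\dim T_{[C]}\mathbf{H}(1,1,1,1)=2+2+2=6$. Since $\mathbf{H}(1,1,1,1)$ is irreducible of dimension $6$, this forces smoothness at every point of $\mathbf{D}$, hence everywhere. I expect this last computation to be the crux: the isomorphisms are essentially formal once Proposition \ref{a3} and $h^{0}(\Oo_C)=1$ are in hand, whereas smoothness along $\mathbf{D}$ genuinely requires analyzing the non-lci triple point and checking that the obstructions to pulling the three lines apart cancel.
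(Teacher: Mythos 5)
Your argument is correct, and for the chain of isomorphisms it is essentially the paper's: both proofs rest on Proposition \ref{a3} to pass between the extreme chambers, on the observation that a PT stable pair with this tripolynomial must be $(1,\Oo _{C})$ for a connected, reduced $[C]\in \mathbf{H}(1,1,1,1)$ (so that $\chi (\Oo _{C'})=1$ kills the zero-dimensional cokernel), and on $h^0(\Oo _C)=1$ to collapse the fibres of the forgetful map; the paper merely packages the last step as a birational surjection $\mathbf{M}(1,1,1,1)\rightarrow \mathbf{H}(1,1,1,1)$ between projective varieties whose inverse is supplied by Lemma \ref{lemm}, which is only cosmetically different from your direct identification of $\mathbf{M}^{\infty}(1,1,1,1)$ with the Hilbert scheme. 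The genuine divergence is your last paragraph: the paper deduces all the geometric properties by citing Proposition \ref{a2} alone, even though that proposition only asserts smoothness \emph{outside} $\mathbf{D}$, so your tangent-space computation at a concurrent triple is an addition rather than a reproduction, and it does close that gap. The computation itself checks out: with $\Ii _C$ generated by $\bar{y}\bar{z},\bar{z}\bar{x},\bar{x}\bar{y}$, the product $\bar{x}h_1$ is killed on $A_2\cup A_3$ while $\bar{y}h_2$ is killed on $A_1\cup A_3$, so the syzygy relations force each product to vanish identically, i.e. $h_i(p)=0$ (one condition on each $3$-dimensional summand since $\Oo _C(0,1,1)$ restricts trivially to $A_1$, etc.), giving $2+2+2=6$. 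Two points you should make explicit to be airtight: first, that the two displayed relations generate the \emph{entire} syzygy module of $(\bar{y}\bar{z},\bar{z}\bar{x},\bar{x}\bar{y})$, both at $p$ (the Hilbert--Burch resolution of the coordinate-axes ideal) and at smooth points of $C$ where one generator becomes redundant -- otherwise the tangent space could a priori be smaller than the locus you describe, not larger; second, in the PT chamber, connectedness of $C'$ should be derived from the inequality $\chi (\Oo _{C'})\le \chi (\Ff )=1$ together with the fact that every connected reduced CM curve of tridegree dominated by $(1,1,1)$ has $\chi =1$ (your ``no triangle'' remark correctly disposes of the only configuration that could push $\chi$ below $1$).
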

\begin{proof}
Fix any $[C]\in \mathbf{H}(1,1,1,1)$. We saw in the proof of Proposition \ref{a2} that $C$ is reduced and connected and so $h^0(\Oo _C)=1$. Take $(1,\Oo _C)$. When $C$ is irreducible, then obviously it gives an element of  $\mathbf{M}^{\alpha}(1,1,1,1)$ for all $\alpha >0$. Hence we see that
a non-empty open part of $\mathbf{H}(1,1,1,1)$ survives in $\mathbf{M}^{0+}(1,1,1,1)$ and in particular the latter is non-empty. For $[\Ff] \in \mathbf{M}(1,1,1,1)$, there exists a non-zero section inducing an injection $0 \rightarrow \Oo_{C_{\Ff}} \rightarrow \Ff$ due to the isomorphism $\mathbf{M}^{0+}(1,1,1,1) \cong \mathbf{M}^{\infty}(1,1,1,1)$ by Proposition \ref{a3}. We know that $[C_{\Ff}] \in \mathbf{H}(1,1,1,1)$. Thus we have $\chi_{\Oo_{C_{\Ff}}}(x,y,z)=x+y+z+1$ and so $\Ff \cong \Oo_{C_{\Ff}}$. Since $h^0(\Oo _C)=1$ for all $[C]\in \mathbf{H}(1,1,1,1)$, so we have $\mathbf{M}^{0+}(1,1,1,1) \cong \mathbf{M}(1,1,1,1)$. Since $\mathbf{H}(1,1,1,1)$ and $\mathbf{M}^{0+}(1,1,1,1)$ are projective and a non-empty open subset of the irreducible variety $\mathbf{H}(1,1,1,1)$ survives in $\mathbf{M}^{0+}(1,1,1,1)$, we get a birational surjection $\mathbf{M}(1,1,1,1)\rightarrow \mathbf{H}(1,1,1,1)$. Indeed this map is an isomorphism due to Lemma \ref{lemm}, which gives its inverse map. Then the assertion follows from Proposition \ref{a2}.
\end{proof}

\section{Segre threefold with Picard number two}

In this section we take $X:= \PP^2\times \PP^1$ and in most cases we adopt the same notations as in the case of $X =\PP^1\times \PP^1\times \PP^1$. For a locally CM curve $C\subset X$ with pure dimension $1$, the {\it bidegree} $(e_1,e_2)\in \ZZ^{\oplus 2}$ is defined to be the pair $(e_1, e_2)$ of integers $e_1:= \deg (\Oo _C(1,0))$ and $e_2:= \deg (\Oo _C(0,1))$, where degree is computed using the Hilbert function of the $\Oo _X$-sheaves $\Oo _C(1,0)$ and $\Oo _C(0,1)$ with respect to the ample line bundle $\Oo _X(1,1)$. We also say that $C=\emptyset$ has bidegree $(0,0)$. Since $\Oo _C(1,0)$ and $\Oo _C(0,1)$ are spanned, we have $e_1\ge 0$ and $e_2\ge 0$, i.e. $(e_1,e_2)\in \NN ^{\oplus 2}$. We have $\deg ({C}) = \deg (\Oo _C(1,1)) =e_1+e_2$. Note that the bipolynomial of $\Oo_C$ is of the form $e_1x+e_2y+\chi$ for some $\chi \in \ZZ$ and $\chi = \chi (\Oo _C)$.

As in the proof of Lemma \ref{a1} we get the following.
\begin{lemma}\label{a1.1}
Let $C\subset X$ be a locally CM curve with the bidegree $(e_1, e_2)$. If the bidegree of $C_{\mathrm{red}}$ is $(b_1, b_2)$ with $b_i=0$ for some $i$, then we have $e_i=0$.
\end{lemma}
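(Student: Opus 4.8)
The plan is to follow the proof of Lemma \ref{a1} almost verbatim, treating the two projections $\pi_1 : X \to \PP^2$ and $\pi_2 : X \to \PP^1$ separately, the only genuinely new feature being the $\PP^2$-factor. The underlying principle is that the bidegree is computed by intersection: $e_1$ equals $\deg(C \cap (\ell \times \PP^1))$ for a general line $\ell \subset \PP^2$ and $e_2$ equals $\deg(C \cap (\PP^2 \times \{p\}))$ for a general $p \in \PP^1$, and likewise $b_1, b_2$ are computed from $C_{\mathrm{red}}$. Consequently $e_i = 0$ holds precisely when $\pi_{i|C}$ has $0$-dimensional (finite) image, and the analogous equivalence holds for $b_i$ and $\pi_{i|C_{\mathrm{red}}}$. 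Since $C$ and $C_{\mathrm{red}}$ have the same support, $\pi_{i|C}$ has finite image if and only if $\pi_{i|C_{\mathrm{red}}}$ does, and this is the bridge that converts the hypothesis $b_i = 0$ into the conclusion $e_i = 0$.

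First I would dispose of the case $i = 2$, which is identical to Lemma \ref{a1}. Assuming $b_2 = 0$, the map $\pi_{2|C_{\mathrm{red}}} : C_{\mathrm{red}} \to \PP^1$ has degree $0$, so its image is finite; as $C$ shares the support of $C_{\mathrm{red}}$, the map $\pi_{2|C}$ also has finite image, whence $e_2 = 0$. Next I would handle the case $i = 1$, where the target is $\PP^2$. Assuming $b_1 = 0$, I would use that every positive-dimensional image in $\PP^2$ meets the general line in at least one point, so $b_1 = 0$ forces $\pi_1(C_{\mathrm{red}})$ to be $0$-dimensional, i.e. a finite set. Because $C_{\mathrm{red}}$ and $C$ have the same support, $\pi_1(C)$ is this same finite set, so a general line $\ell \subset \PP^2$ misses $\pi_1(C)$ and hence $\ell \times \PP^1$ is disjoint from $C$; therefore $e_1 = \deg(C \cap (\ell \times \PP^1)) = 0$.

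The only step requiring any care, and the one I expect to be the main (mild) obstacle, is the $\PP^2$-factor bookkeeping in the case $i = 1$: one must argue that a $1$-dimensional image $\pi_1(C_{\mathrm{red}})$ would meet the general line and force $b_1 > 0$, so that $b_1 = 0$ genuinely yields a finite image. This replaces the trivial ``finite image in $\PP^1$'' observation used in Lemma \ref{a1}; everything else transfers immediately, which is why the statement can be justified with the phrase \emph{as in the proof of Lemma \ref{a1}}.
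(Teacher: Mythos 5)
Your proposal is correct and matches the paper's approach: the paper gives no separate argument, stating only that the lemma follows ``as in the proof of Lemma \ref{a1}'', i.e.\ precisely the observation that $e_i=0$ (resp.\ $b_i=0$) is equivalent to $\pi_{i|C}$ (resp.\ $\pi_{i|C_{\mathrm{red}}}$) having finite image, and that $C$ and $C_{\mathrm{red}}$ share the same support. Your extra care with the $\PP^2$-factor via general lines is exactly the routine adaptation the paper leaves implicit.
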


\begin{proposition}\label{c1}
$\mathbf{H}(1,1,1)$ is smooth and irreducible of dimension $5$, and its elements are reduced. 
\end{proposition}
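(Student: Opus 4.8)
The plan is to mirror the proof of Proposition \ref{a2}: first show every member is reduced and connected, classify such curves into two families, then establish smoothness and the expected dimension on each family and check that the reducible family degenerates into the irreducible one. Fix $[C]\in\mathbf{H}(1,1,1)$, a subscheme of degree $e_1+e_2=2$ with $\chi(\Oo_C)=1$. Since $e_1=e_2=1\neq 0$, Lemma \ref{a1.1} forces the bidegree $(b_1,b_2)$ of $C_{\mathrm{red}}$ to satisfy $b_1,b_2\ge 1$; as $b_1+b_2=\deg(C_{\mathrm{red}})\le\deg(C)=2$, we get $(b_1,b_2)=(1,1)$ and $\deg(C_{\mathrm{red}})=2$. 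Thus $C$ is generically reduced, its nilradical is $0$-dimensional, and $\chi(\Oo_C)\ge\chi(\Oo_{C_{\mathrm{red}}})$ with equality if and only if $C=C_{\mathrm{red}}$. A reduced curve of bidegree $(1,1)$ and degree $2$ is either irreducible, a connected union of two lines, or a disjoint union of two lines, with Euler characteristic $1$, $1$ and $2$ respectively; since $\chi(\Oo_C)=1$ the disjoint case is impossible and $C=C_{\mathrm{red}}$ is reduced and connected.

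This leaves two cases. (i) If $C$ is irreducible then, as $\pi_{1|C}$ has degree $e_1=1$, its image $\pi_1(C)$ is a line $\ell\subset\PP^2$ and $C\subset\ell\times\PP^1\cong\PP^1\times\PP^1$ is an irreducible curve of class $\Oo(1,1)$, hence a smooth rational curve, namely the graph of an isomorphism $\ell\to\PP^1$. These form the open locus $\mathbf{H}(1,1,1)^{\mathrm{sm}}$, which is a single orbit of $\mathrm{Aut}^0(X)=PGL(3)\times PGL(2)$ (transitive on lines $\ell$, then on isomorphisms $\ell\to\PP^1$), hence smooth, irreducible and unirational. For such $C\cong\PP^1$, adjunction with $\omega_X=\Oo_X(-3,-2)$ gives $\det N_C\cong\omega_C\otimes\omega_X^{-1}|_C$, of degree $(-2)+(3\cdot 1+2\cdot 1)=3$; since $N_C$ is a quotient of the globally generated bundle $TX|_C$ it is globally generated, so $N_C\cong\Oo_{\PP^1}(a)\oplus\Oo_{\PP^1}(b)$ with $a,b\ge 0$ and $a+b=3$. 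Hence $h^1(N_C)=0$ and $h^0(N_C)=\chi(N_C)=5$, and $\mathbf{H}(1,1,1)$ is smooth of dimension $5$ along this locus.

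(ii) Otherwise $C=(\ell\times\{q\})\cup(\{p\}\times\PP^1)$ with $p\in\ell$ (forced by $\chi(\Oo_C)=1$), the two lines meeting transversally at the single node $(p,q)$; write $C_2=\ell\times\{q\}$ and $C_1=\{p\}\times\PP^1$. Here $N_C$ is locally free, $N_{C_1}\cong\Oo_{\PP^1}^{\oplus 2}$ and $N_{C_2}\cong\Oo_{\PP^1}(1)\oplus\Oo_{\PP^1}$, and I would run the Mayer-Vietoris sequence for $N_C$ exactly as in part (b1) of Proposition \ref{a2}: each $N_{C|C_i}$ is a single positive elementary transformation of $N_{C_i}$ at the node, so $h^1(N_{C|C_i})=0$ and $N_{C|C_i}$ is globally generated, whence the restriction map to the node is surjective and $h^1(N_C)=0$ with $\chi(N_C)=5$. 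Thus $\mathbf{H}(1,1,1)$ is smooth of dimension $5$ at $[C]$ too.

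It remains to connect the two families. Since $p\in\ell$, both components of a type (ii) curve lie in the surface $\ell\times\PP^1\cong\PP^1\times\PP^1$, inside which $C$ is a reducible member of the $3$-dimensional linear system $|\Oo(1,1)|$ whose general member is a smooth conic of type (i); hence $[C]\in\overline{\mathbf{H}(1,1,1)^{\mathrm{sm}}}$. Therefore $\mathbf{H}(1,1,1)$ is the closure of the irreducible $5$-dimensional locus (i), and being smooth everywhere it is a smooth irreducible variety of dimension $5$ with all members reduced. The only real work is the reducible case: verifying smoothness there via the elementary-transformation Mayer-Vietoris computation. This is entirely parallel to Proposition \ref{a2}, and because the degree is only $2$ there is no analogue of the bad triple-point locus $\mathbf{D}$ appearing there, so the genuine difficulty is modest; the points most in need of care are the reducedness dichotomy through Lemma \ref{a1.1} and the degeneration of the nodal curves to smooth conics within a fixed ruled surface.
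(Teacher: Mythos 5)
Your proposal is correct and follows essentially the same route as the paper's proof: reducedness and connectedness via Lemma \ref{a1.1} plus the Euler-characteristic comparison, a case split into smooth conics and nodal unions of two lines, vanishing of $h^1(N_C)$ in each case to get smoothness of dimension $5$, and a closure argument placing the reducible curves in $\overline{\mathbf{H}(1,1,1)^{\mathrm{sm}}}$. The only (immaterial) differences are local: for the nodal case the paper deduces $h^1(N_C)=0$ directly from $h^1(TX_{|C})=0$ and the finitely supported cokernel of $TX_{|C}\to N_C$ rather than via the Mayer--Vietoris/elementary-transformation computation, and for the closure it uses a dimension count ($4<5$ at a smooth point) rather than your degeneration inside $|\Oo_{\ell\times\PP^1}(1,1)|$.
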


\begin{proof}
Let us fix $[C]\in \mathbf{H}(1,1,1)$. By Lemma \ref{a1.1} every $1$-dimensional component of $C$ is generically reduced, i.e. the purely $1$-dimensional subscheme $E$ of $C_{\mathrm{red}}$ has bidegree $(1,1)$. We have $\chi (\Oo _C) \ge \chi (\Oo _D)$ for each connected component $D$ of $E$ and equality holds if and only if $D =C$. Since we have $\chi (\Oo _D) \ge 1$, we get $C = D$ and that $C$ is connected. If $C$ is irreducible, then it is a smooth conic. Since $N_C$ is a quotient of $TX_{|C}$, we get $h^1(N_C) =0$. It implies that $\mathbf{H}(1,1,1)$ is smooth at $[C]$ and of dimension $h^0(N_C) = \deg (N_C)+2 = \deg (TX_{|C}) = 5$. Indeed we have $N_C \cong \Oo_{\PP^1}(2)\oplus \Oo_{\PP^1}(1)$.

Now assume that $C$ is reducible, say the union of a line $D_1$ of bidegree $(1,0)$ and a line $D_2$ of bidegree $(0,1)$. Since $\deg (D_1\cap D_2) \le 1$ and $[C]\in \mathbf{H}(1,1,1)_+$, we get $\deg (D_1\cap D_2)=1$ and that $C$ is nodal. Since $h^1(TX_{|C})=0$ and the natural map $TX_{|C} \rightarrow N_C$ is supported at the point $D_1\cap D_2$, we have $H^1(N_C)=0$. Hence $\mathbf{H}(1,1,1)$ is again smooth at $[C]$ and of dimension $\deg (N_C) +2=5$. Since the set of all such reducible curves is $4$-dimensional, so each such curve is in the closure of the open subset of $\mathbf{H}(1,1,1)$ parametrizing the smooth curves.
\end{proof}

\begin{proposition}\label{c2}
$\mathbf{H}(1,1,2)$ is smooth and irreducible of dimension $5$. It parametrizes the disjoint unions of two lines, one of bidegree $(1,0)$ and the other of bidegree $(0,1)$.
\end{proposition}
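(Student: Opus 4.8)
The plan is to follow the template already used for Proposition \ref{c1} and Lemma \ref{bb4}: first pin down the curves set-theoretically, then read off irreducibility and dimension from an explicit parameter space, and finally establish smoothness by a normal bundle computation.

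First I would fix $[C]\in \mathbf{H}(1,1,2)$ and show that $C$ is forced to be a disjoint union of two lines. By Lemma \ref{a1.1}, since $e_1=e_2=1$, the reduced curve $C_{\mathrm{red}}$ has bidegree $(b_1,b_2)$ with $b_1,b_2\ge 1$; as $b_1+b_2=\deg(C_{\mathrm{red}})\le \deg (C)=2$, this forces $b_1=b_2=1$ and $\deg(C_{\mathrm{red}})=\deg(C)$, so $C=C_{\mathrm{red}}$ is reduced. A connected reduced curve satisfies $\chi(\Oo_C)=1-p_a\le 1$, so the hypothesis $\chi(\Oo_C)=2$ rules out connectedness; hence $C$ splits into two degree-one curves, i.e. two disjoint lines. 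Since each line has bidegree $(1,0)$ or $(0,1)$ and the two bidegrees must add up to $(1,1)$, one line $L_1$ has bidegree $(1,0)$ and the other $L_2$ has bidegree $(0,1)$, as claimed. Conversely any such disjoint pair gives a curve of bidegree $(1,1)$ with $\chi=1+1=2$, so this is a set-theoretic description of $\mathbf{H}(1,1,2)$.

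Next I would describe the parameter space. A line of bidegree $(1,0)$ is of the form $\ell\times\{q\}$ with $\ell\subset\PP^2$ a line and $q\in\PP^1$, while a line of bidegree $(0,1)$ is $\{p\}\times\PP^1$ with $p\in\PP^2$; the two are disjoint precisely when $p\notin\ell$. Thus $\mathbf{H}(1,1,2)$ is, set-theoretically, the open locus $\{p\notin\ell\}$ inside $(\PP^2)^\vee\times\PP^1\times\PP^2$, which is irreducible and rational of dimension $2+1+2=5$. For smoothness it suffices to check $h^1(N_C)=0$, and since $C=L_1\sqcup L_2$ is a disjoint union we have $N_C\cong N_{L_1}\oplus N_{L_2}$. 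Splitting $TX\cong \pi_1^*T_{\PP^2}\oplus \pi_2^*T_{\PP^1}$ and restricting gives $N_{L_1}\cong \Oo_{\PP^1}(1)\oplus\Oo_{\PP^1}$ and $N_{L_2}\cong\Oo_{\PP^1}^{\oplus 2}$, both with vanishing $h^1$. Hence $h^1(N_C)=0$, so $\mathbf{H}(1,1,2)$ is smooth at $[C]$ of dimension $h^0(N_C)=3+2=5$, matching the count above.

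Since smoothness holds at every point, $\mathbf{H}(1,1,2)$ is reduced, and as its support is the irreducible $5$-fold described above it is irreducible of dimension $5$. I do not anticipate a serious obstacle here: the computation is uniform over the (connected, irreducible) parameter space, and each step parallels an already-established case. The only mild subtlety is justifying that the value $\chi=2$ forces the two lines to be \emph{disjoint} rather than meeting, which is exactly the connectedness dichotomy $\chi(\Oo_C)\le 1$ for connected reduced curves used above; everything else is routine.
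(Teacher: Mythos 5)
Your proposal is correct and follows essentially the same route as the paper: Lemma \ref{a1.1} forces $C$ to be reduced, the value $\chi(\Oo_C)=2$ rules out connectedness and identifies $C$ as a disjoint union of a $(1,0)$-line and a $(0,1)$-line, and smoothness of dimension $5$ follows from $h^1(N_C)=0$ with $h^0(N_C)=3+2=5$. Your version merely spells out the explicit parameter space $(\PP^2)^\vee\times\PP^1\times\PP^2$ and the splitting of the normal bundles, which the paper leaves implicit.
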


\begin{proof}
By Lemma \ref{a1.1} any curve $[C]\in \mathbf{H}(1,1,2)_+$ is reduced. If $C$ is irreducible, we get $\chi (\Oo _C)=1$, a contradiction. If $C=D_1\cup D_2$ with lines $D_1$ of bidegree $(1,0)$ and $D_2$ of bidegree $(0,1)$, we get $D_1\cap D_2 =\emptyset$. Then we have $h^1(N_C)=0$ and $h^0(N_C) = h^0(N_{D_1})+h^0(N_{D_2}) = \deg(TX_{|D_1})+\deg (TX_{|D_2}) =5$. 
\end{proof}

\begin{remark}
Using the argument in the proof of Proposition \ref{c1}, we get that $\mathbf{H}(1,1,\chi)_+ =\emptyset$ if either $\chi\le 0$ or $\chi\ge 3$.
\end{remark}

In the case of Segre threefold with Picard number three, the main ingredient is the knowledge on the Hilbert scheme of double lines. So we suggest the following results for the Segre threefold with Picard number two, as in Theorem \ref{o1}. As in the case of $\PP^1 \times \PP^1 \times \PP^1$, let $\Dd _a$ be the subset of $\mathbf{H}(0,2,a)_+$ parametrizing the double lines whose reduction is a line of bidegree $(0,1)$ in $X=\PP^2 \times \PP^1$ for each $a\in \ZZ$. For the moment we take $\Dd _a$ as a set and it would be clear in each case which scheme-structure is used on it. Since $X$ is a smooth 3-fold, \cite[Remark 1.3]{bf} says that each $[B]\in \Dd _a$ is obtained by the Ferrand's construction and in particular it is a ribbon in the sense of \cite{be} with a line of bidegree $(1,0)$ as its support. Let $\Rr _a$ be the subset of $\mathbf{H}(2,0,a)_+$ parametrizing the double structures on lines of bidegree $(1,0)$. 

\begin{proposition}\label{uo1.1}
The description on $\Rr_a$ is as follows:
\begin{enumerate}
\item $\Rr_a$ is non-empty if and only if $a\ge 2$. It is parametrized by an irreducible and rational variety of dimension $2a-1$. 
\item We have $\Rr_a = \mathbf{H}(2,0,a)_+$ for $a\ge 3$.
\item $\mathbf{H}(2,0,2)_+$ is smooth, irreducible, rational and of dimension $4$. 
\end{enumerate}
\end{proposition}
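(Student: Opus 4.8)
The plan is to repeat the proof of Theorem~\ref{o1} almost verbatim, the single genuinely new ingredient being the conormal bundle of the support line. Fix $[B]\in\Rr_a$ and put $L:=B_{\mathrm{red}}$. As there, $B$ is a degree-two locally CM curve with $L\cong\PP^1$ as its support and hence a ribbon in the sense of \cite{be}; the projection restricting to an isomorphism on $L$ splits the ribbon, so $\Oo_B$ fits into
$$0\to\Oo_L(a-2)\to\Oo_B\to\Oo_L\to0$$
which splits over $\Oo_L$, the twist $a-2$ being determined by $\chi(\Oo_B)=a$. In particular every member of $\Rr_a$ is a double structure on $L$, so Lemma~\ref{a1.1} gives part~(2) immediately: for $a\ge3$ the only possible reduced members of $\mathbf{H}(2,0,a)_+$, namely disjoint unions of two lines, have $\chi=2<a$, hence no reduced curve occurs and $\Rr_a=\mathbf{H}(2,0,a)_+$.

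The crux is then to compute $\Ii_L/\Ii_L^2$. Since $L$ is a complete intersection of two divisors in $X$, the Koszul resolution of $\Ii_L$ determines this conormal bundle, and by the Ferrand construction (\cite[Introduction]{M}, \cite[Proposition~1.4]{N}) the double structures on $L$ with $\chi=a$ are exactly the surjections $\Ii_L/\Ii_L^2\onto\Oo_L(a-2)$ taken modulo scalars. A surjection exists precisely when $a\ge2$, which is the nonemptiness criterion of~(1); and the surjectivity (base-point-freeness) condition carves out a dense open subset of a projective space. Fibering this over the rational parameter variety of support lines presents $\Rr_a$ as an irreducible rational variety, and reading off the dimension of the space of surjections yields the dimension in~(1). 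Once the conormal bundle is pinned down, every step is parallel to Theorem~\ref{o1}.

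For~(3) I would mirror the proof of Theorem~\ref{o1}(3). When $a=2$ the sequence above splits and $B$ is a complete intersection; the locus $\EE$ of disjoint pairs of lines is smooth, irreducible, rational and $4$-dimensional with $h^1(N_C)=0$, while the double-line locus $\Rr_2$ has strictly smaller dimension, so it suffices to prove smoothness of dimension $4$ along $\Rr_2$. Each such $B$ has only planar singularities and is therefore locally unobstructed \cite[2.12.1]{kol}, giving $\dim_{[B]}\mathbf{H}(2,0,2)_+\ge\chi(N_B)$; the complete-intersection description then computes $N_B$, forces $\chi(N_B)=4$ and $h^1(N_B)=0$, and identifies $\mathbf{H}(2,0,2)_+$ with a Hilbert scheme of two points on a rational surface --- the exact analogue of $\mathbf{H}(2,0,0,2)_+\cong\mathbf{Hilb}^2(\PP^1\times\PP^1)$ --- which is smooth, irreducible, rational and $4$-dimensional.

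The hard part is the conormal bundle computation. Because the support line sits inside $\PP^2\times\PP^1$ differently than the tridegree-$(1,0,0)$ line sits inside $\PP^1\times\PP^1\times\PP^1$, one cannot simply quote $N_{L|X}\cong\Oo_L^{\oplus2}$ but must recompute $\Ii_L/\Ii_L^2$ from the Koszul complex and check that the surjectivity locus has the expected dimension, since this one computation controls both the parameter count in~(1) and the normal-bundle analysis in~(3). The remaining delicate point, exactly as in Theorem~\ref{o1}, is to establish $h^1(N_B)=0$ at the non-reduced members so that $\mathbf{H}(2,0,2)_+$ is smooth of the stated dimension.
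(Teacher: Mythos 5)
Your plan correctly isolates the one new input needed beyond Theorem~\ref{o1} --- the conormal bundle $\Ii_L/\Ii_L^2$ of the supporting line --- but then declares it ``the hard part'' and never computes it, and both quantitative claims of part~(1) hinge on the answer. The resolution is immediate once you pin down which lines are in play: a line in the sense of $\Rr_a$ is a fibre $L=\PP^1\times\{o\}$ of the projection $X=\PP^2\times\PP^1\to\PP^2$ (this is exactly how the paper writes it), so $N_{L|X}\cong T_o\PP^2\otimes\Oo_L\cong\Oo_L^{\oplus 2}$ is trivial --- the line sits in $X$ in precisely the same way as the tridegree-$(1,0,0)$ line sits in $\PP^1\times\PP^1\times\PP^1$, contrary to your stated worry. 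With the trivial conormal bundle, Ferrand surjections $\Oo_L^{\oplus 2}\onto\Oo_L(a-2)$ are pairs $(f,g)\in H^0(\Oo_L(a-2))^{\oplus 2}$ with no common zero modulo scalars, which exist iff $a\ge 2$ and form a $(2a-3)$-dimensional rational variety; fibring over the $2$-dimensional family of supporting lines gives $2a-1$. None of this is in your write-up: you assert ``a surjection exists precisely when $a\ge 2$'' and that one can ``read off'' the dimension without the computation that justifies either, and you never identify the parameter space of supporting lines, which you need to be $2$-dimensional to land on $2a-1$. The gap is not cosmetic: for the other ruling of lines $\ell\times\{pt\}$ (the one relevant to $\Dd_a$ in Proposition~\ref{o1.1}) the conormal bundle is $\Oo_L(-1)\oplus\Oo_L$, surjections onto $\Oo_L(a-2)$ already exist for $a=1$ (the planar double line), and the dimension count comes out differently; an unexamined ``analogous'' computation could therefore give the wrong thresholds.

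Your treatment of part~(2) matches the paper's. For part~(3) you propose the local-unobstructedness route of Theorem~\ref{o1}(3), whereas the paper instead notes that the pair $(f,g)$ determines a length-$2$ scheme $v\subset\PP^2$ supported at $o$, places $A$ inside the quadric $L\times R$ with $R$ the line spanned by $v$, reads off the normal sheaf there with $h^1(N_A)=0$, and deforms $A$ flatly to a disjoint pair of lines; either route is viable, but yours again presupposes the normal-bundle computation you did not carry out.
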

 
\begin{proof}
Each element of $\Rr _a$ is a ribbon in the sense of \cite{be}. For any line $L\subset X$ of bidegree $(1,0)$ let $\Rr _a(L)$ denote the set of all $[A]\in \Rr _a$ such that $A_{\mathrm{red}} =L$. The set of all lines of $X$ with bidegree $(1,0)$ is isomorphic to $\PP^1$. Any line $L\subset X$ of bidegree $(1,0)$ has trivial normal bundle and so $\Rr _a(L)$ is parametrized by the pairs $(f,g)$ with $f\in H^0(\Oo _L(a-2))$ and $g\in H^0(\Oo _L(a-2))$ with no common zero. Here we have the convention that $(L,f_1,g_1)$ and $(L,f_2,g_2)$ give the same element of $\Rr _a(L)$ if and only if there is $t\in \CC^{\times}$ with $f_1=tg_1$ and $f_2=tg_2$. Hence we get parts (1) and (2) of Proposition \ref{uo1.1} for $a\ge 3$ and that for any $a\ge 2$ each element of $\Rr _a$ is a split ribbon.
 
The set $\mathbf{H}(2,0,2)_+$ is the disjoint union of $\Rr _2$ and the set $\Tt$ of all disjoint unions of two different lines of bidegree $(1,0)$. $\Tt$ is isomorphic to the symmetric product of two copies of $\PP^2$ and so it is smooth and rational with $\dim (\Tt )=4$. Fix a line $L\subset X$ of bidegree $(1,0)$ such that $L = \PP^1\times \{o\}$ with $o\in \PP^2$, and $[A]\in \Rr _2(L)$ determined by $(f,g)\in \CC^2\setminus \{(0,0)\}$, up to a non-zero scalar. The pair $(f,g)$ defines a degree $2$ zero-dimensional scheme $v\subset \PP^2$ with $v_{\mathrm{red}} = \{o\}$. Let $R\subset \PP^2$ be the line spanned by $v$ and then $A$ is contained in $L\times R$ as a curve of bidegree $(2,0)$ and hence it is a flat deformation of a family of pairs of disjoint lines of $L\times R$ and hence of $X$. We also get that the normal sheaf $N_A$ of $A$ in $X$ is isomorphic to $\Oo _L\oplus \Oo _L(1)$. Hence we get $h^1(N_A) =0$ and so $\mathbf{H}(2,0,2)_+$ is smooth at $[A]$.
 \end{proof}

Below we give a description on $\Dd_a$ as in Theorem \ref{o1} and Proposition \ref{uo1.1}, which can be proven by the same way. 

\begin{proposition}\label{o1.1}
The description on $\Dd_a$ is as follows:
\begin{enumerate}
\item $\Dd_a$ is non-empty if and only if $a\ge 2$. It is parametrized by an irreducible variety of dimension $2a-1$. 
\item We have $\Dd _a = \mathbf{H}(0,2,a)_+$ for $a\ge 3$.
\item $\mathbf{H}(0,2,2)_+$ is smooth, irreducible, rational and of dimension $4$. 
\end{enumerate}
\end{proposition}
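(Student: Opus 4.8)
The plan is to follow the proofs of Theorem \ref{o1} and Proposition \ref{uo1.1} almost verbatim, reducing the whole argument to the Ferrand/ribbon description of double structures fibred over the variety parametrizing their support lines. Fix $[A]\in\Dd_a$ and set $L:=A_{\mathrm{red}}$, a line of bidegree $(0,1)$. By \cite[Remark 1.3]{bf} the curve $A$ is a ribbon on $L$ in the sense of \cite{be}, and since the relevant projection restricts to an isomorphism on $L$, the ribbon is split; hence $\Oo_A$ sits in an extension $0\to\Oo_L(a-2)\to\Oo_A\to\Oo_L\to0$ splitting as $\Oo_L$-modules. Exactly as in Proposition \ref{uo1.1}, every quantitative statement then rests on one input: the normal bundle $N_{L|X}$ of a line of bidegree $(0,1)$.

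The first step is to compute $N_{L|X}$. Writing $L=L'\times\{o\}$ with $L'\subset\PP^2$ a line and $o\in\PP^1$, the normal bundle splits, with one summand $N_{L'|\PP^2}\cong\Oo_L(1)$ coming from the $\PP^2$-direction and one trivial summand $\Oo_L$ coming from the $\PP^1$-direction, so $N_{L|X}\cong\Oo_L(1)\oplus\Oo_L$. This is the one genuine difference from the bidegree $(1,0)$ case of Proposition \ref{uo1.1}, where $N_{L|X}$ was trivial, and it is the feature that must be propagated through the whole argument. Dualizing, $\Ii_L/\Ii_L^2\cong\Oo_L(-1)\oplus\Oo_L$, and the Ferrand construction puts the ribbons $A$ on $L$ with $\chi(\Oo_A)=a$ in bijection with the surjections $\Oo_L(-1)\oplus\Oo_L\twoheadrightarrow\Oo_L(a-2)$ taken up to a scalar. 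Letting $L$ vary over the variety $(\PP^2)^{\vee}\times\PP^1$ of lines of bidegree $(0,1)$ assembles $\Dd_a$ into an irreducible fibration, and the nonemptiness statement and the dimension in (1) follow by counting these surjections and fibring over the base, exactly as in Proposition \ref{uo1.1}. For (2), Lemma \ref{a1.1} shows that every reduced member of $\mathbf{H}(0,2,a)_+$ either is a conic contained in a single fibre $\PP^2\times\{o\}$ or is a disjoint union of two lines of bidegree $(0,1)$ lying over distinct points of $\PP^1$; in all cases $\chi(\Oo_C)\le2$, so for $a\ge3$ no reduced curve occurs and $\Dd_a=\mathbf{H}(0,2,a)_+$.

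For part (3) I would treat the case $a=2$ by writing $\mathbf{H}(0,2,2)_+=\Dd_2\sqcup\Tt'$, where $\Tt'$ parametrizes the disjoint unions of two lines of bidegree $(0,1)$; the locus $\Tt'$ is an open subset of the relevant symmetric product of $(\PP^2)^{\vee}\times\PP^1$ and is smooth, irreducible and rational. The two substantive points are: (a) $\mathbf{H}(0,2,2)_+$ is smooth along $\Dd_2$, and (b) $\Dd_2\subset\overline{\Tt'}$. For (a) I would, as in Proposition \ref{uo1.1}(3), use the pair $(f,g)$ attached to $[A]$ to place $A$ inside a smooth surface of $X$, compute the normal sheaf $N_A$ explicitly, and verify $h^1(N_A)=0$; together with the spannedness coming from the split extension this forces smoothness of the expected dimension at $[A]$. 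For (b) I would produce, for a general $[A]\in\Dd_2$, a flat family degenerating a pair of disjoint lines of bidegree $(0,1)$ to $A$, by deforming the defining data $(f,g)$ so that it separates into two distinct disjoint lines. Combining (a) and (b) identifies $\mathbf{H}(0,2,2)_+$ as a smooth irreducible rational variety of the stated dimension.

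The hard part will be step (a). Because $N_{L|X}\cong\Oo_L(1)\oplus\Oo_L$ is no longer trivial, the clean identification of the double-line locus with a symmetric-product-type object available for $\Rr_a$ is gone, and one must instead compute $N_A$ directly and control $h^1(N_A)$, tracking the asymmetry between the two summands throughout. The second delicate point is the smoothability statement $\Dd_2\subset\overline{\Tt'}$, since it is precisely what glues the two strata into a single smooth irreducible family rather than leaving $\Dd_2$ as a separate component.
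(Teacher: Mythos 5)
Your overall strategy (the Ferrand/ribbon description of $\Dd_a$, fibred over the variety of supporting lines) is exactly what the paper intends by ``can be proven by the same way,'' but the computation you actually set up cannot yield the proposition as stated, and the problem is your identification of the lines of bidegree $(0,1)$. By the definition at the start of Section 6, $e_1=\deg(\Oo_C(1,0))$ with $\Oo_X(1,0)$ pulled back from the $\PP^2$-factor, so a line of bidegree $(0,1)$ is a fibre $\{p\}\times\PP^1$ of the projection $X\to\PP^2$: its normal bundle is \emph{trivial} and the family of such lines is $\PP^2$, not $(\PP^2)^{\vee}\times\PP^1$. The lines $\ell\times\{o\}$ with $N_{L|X}\cong\Oo_L(1)\oplus\Oo_L$ that you work with are the supports of the elements of $\Rr_a$, not of $\Dd_a$. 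If you carry your own count to the end you will see it contradicts all three parts: surjections $\Oo_L(-1)\oplus\Oo_L\twoheadrightarrow\Oo_L(a-2)$ modulo scalars form a $(2a-2)$-dimensional family (a pair $(f,g)\in H^0(\Oo_L(a-1))\times H^0(\Oo_L(a-2))$ with no common zero, up to scale), so over a $3$-dimensional base you get $\dim\Dd_a=2a+1$, not $2a-1$; your $\Tt'$ of disjoint pairs is $6$-dimensional, not $4$-dimensional; and for $a=1$ the pair $(f,g)$ with $f$ a nonzero constant and $g=0$ is already surjective, i.e.\ the planar double line $2\ell\subset\PP^2\times\{o\}$ gives a nonempty $\Dd_1$, against part (1). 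So the argument as proposed would disprove, not prove, the statement; asserting that the numbers ``follow exactly as in Proposition \ref{uo1.1}'' papers over this.

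With the correct identification the proof really is word-for-word the one printed for Proposition \ref{uo1.1} and Theorem \ref{o1}(3): for a fixed fibre $L=\{p\}\times\PP^1$ the double structures with $\chi=a$ are the surjections $\Oo_L^{\oplus 2}\twoheadrightarrow\Oo_L(a-2)$ up to scale (empty for $a\le 1$, of dimension $2a-3$ for $a\ge 2$), whence $\dim\Dd_a=2+(2a-3)=2a-1$; and every $[C]\in\mathbf{H}(0,2,2)_+$ is $\pi_1^{-1}(Z)$ for a length-two subscheme $Z\subset\PP^2$, so $\mathbf{H}(0,2,2)_+\cong\mathbf{Hilb}^2(\PP^2)$, which is smooth, irreducible, rational and $4$-dimensional. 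In fairness, the confusion originates in the paper: the printed proof of Proposition \ref{uo1.1} describes its lines as ``$\PP^1\times\{o\}$ with $o\in\PP^2$'' with trivial normal bundle, which under Section 6's own conventions is the bidegree-$(0,1)$ family, so the labels $(1,0)$ and $(0,1)$ are used inconsistently there. The two families of lines are genuinely asymmetric, and only one of the twin Propositions \ref{uo1.1} and \ref{o1.1} can carry the numbers $2a-1$ and $4$ --- the one about double structures on the fibres over $\PP^2$. Your proposal attaches those numbers to the wrong family, and that is the gap you need to close.
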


\def\cprime{$'$}
\providecommand{\bysame}{\leavevmode\hbox to3em{\hrulefill}\thinspace}
\providecommand{\MR}{\relax\ifhmode\unskip\space\fi MR }
\providecommand{\MRhref}[2]{%
  \href{http://www.ams.org/mathscinet-getitem?mr=#1}{#2}
}
\providecommand{\href}[2]{#2}

\end{document}